\newlength\myheight
\newlength\mydepth
\settototalheight\myheight{Xygp}
\newtheorem{theorem}{Theorem}[section]
\newtheorem{lemma}[theorem]{Lemma}
\newtheorem{corollary}[theorem]{Corollary}
\theoremstyle{definition}
\newtheorem{definition}[theorem]{Definition}
\newtheorem{example}[theorem]{Example}
\theoremstyle{remark}
\newtheorem{remark}[theorem]{Remark}
\numberwithin{equation}{section}
\newcommand{\embeddpdf}[3]{\raisebox{#1\mydepth}{\includegraphics[height=#2\myheight]{#3}}}
\author{Dheeraj Kulkarni}
\address{Department of Mathematics \\ Indian Institute of Science Education and Research Bhopal}
\email{dheeraj@iiserb.ac.in}
\author{Monika Yadav}
\address{Department of Mathematics \\ Indian Institute of Science Education and Research Bhopal}
\email{monika18@iiserb.ac.in}
\subjclass[2020]{57K10, 57K14, 57K33}
\keywords{Jones Polynomial, Khovanov Homology, Legendrian Knots}
\begin{document}
\title[Jones Polynomial and its Categorification for Legendrian Knots]{On a generalization of Jones Polynomial and its Categorification for Legendrian Knots}
\maketitle

\begin{abstract}
In this article, we explore a polynomial invariant for Legendrian knots which is a natural extension of Jones polynomial for (topological) knots. To this end, a new type of skein relation is introduced for the front projections of Legendrian knots.
Further, we give a categorification of the polynomial invariant for Legendrian knots which is a natural extension of Khovanov homology. The Thurston-Bennequin invariant of Legendrian knot appears naturally in the construction of the homology as the grade shift.

The constructions of the polynomial invariant and its categorification are natural in the sense that if we treat Legendrian knots as only knots (that is, we forget the geometry on the knots), then we recover the Jones polynomial and Khovanov homology respectively. In the end, we discuss strengths and limitations of these invariants.
\end{abstract}


\section{Introduction}
The Jones polynomial \cite{Jones1, Jones2} and its categorification due to Khovanov \cite{Khova1} have 
been at the core of clasical knot theory for quite some time. The combinatorial way to 
construct Jones polynomial via bracket polynomial due to Kauffman \cite{Kauffman1} makes the Jones polynomial 
more accessible for computation when a regular projection of a knot is given. A combinatorial construction of Khovanov Homology is available in \cite{Viro}. 
This makes the computation of Khovanov homology also accessible.

The bracket polynomial of Kauffman is constructed by way of resolving crossings inductively in a given knot diagram in two different ways until we get a bunch of planar circles (unknots). Each time a 
resolution of a crossing is performed, appropriate weights are attached in the computation 
(bracket relation) to simpler diagrams. Finally a polynomial is associated to the circle which acts as the basic element in the construction. The rules of bracket polynomial then give a 
polynomial associated to a knot diagram. To get Jones polynomial, the bracket polynomial is then multiplied by an appropriate power of the variable.

The combinatorial approach to Khovanov homology involves resolving all crossings in a given knot diagram to obtain various `states'. The knot diagram can be thought as a superposition of these states.
The states can be organized depending on the type of resolutions (horizontal and vertical) 
performed. More precisely, an appropriate bi-grading is introduced to organize the states. The 
chain groups are formed using states corresponding to the same bi-grading. Then, the differential map is constructed to encode the `interaction' between the states with different bi-gradings. Finally, the homology associated to a given knot diagram can be computed.  

To establish that the above constructions give invariants of knots upto isotopy, one shows the invariance of the polynomial and homology under Reidemeister moves as two knot diagrams 
represent isotopic knots if they are related by a sequence of Reidemeister moves and vice versa.

A smooth knot in $\mathbb{R}^3 $ equipped with standard contact structure (a particular 2-plane field) is a Legendrian knot if it is tangent to the contact structure at each point. Legendrian 
knots are classified up to Legendrian isotopy (an isotopy through a family of Legendrian knots). 
Similar to regular knot projections of (smooth) knots, Legendrian knots admit regular projections with finitely many cusps, called the front projection.
We also have a version of Reidemeister moves and Reidemeister theorem for Legendrian knots in terms of their front projections. Till this point, Legendrian knot theory proceeds parallel to classical knot theory. However, it requires more advanced techniques to distinguish Legendrian knots.
Legendrian isotopic Legendrian knots are smoothly isoptopic for an obvious reason. However, two Legendrian knots can be smoothly isotopic without having to be Legendrian isotopic. For instance,
there are infinitely many Legedrian knots (up to Legendrian isotopy) that are smoothly isotopic 
to the unknot. Thus, one may think of the classification problem of Legendrian as a refinement of the classification problem of knots.

In the light of the above discussion, one may ask if there is a refinement of Jones polynomial and Khovanov homology for Legendrian knots. The question implicitly suggest natural extensions
of the bracket polynomial and Khovanov homology in the sense that a reasonable notion of extension will reduce to
bracket polynomial and Khovanov homology of the underlying smooth knot respectively if the Legendrian knot is treated as a smooth knot only. In this article, we settle the above question by extending the Jones polynomial and Khovanov homology in a natural way to the setting of Legendrian knots.

In order to arrive at an appropriate refinement of Jones polynomial, we take simple-minded viewpoint of generalizing the bracket polynomial in the first step. We work with front
projections. We introduce resolutions of a crossing adapted to the front projections so that after resolving each crossing in a front projection, we are left with `basic elements' analogous
to the circle, that is front projections of Legendrian unknots. However, there are infinitely many Legendrian unknots (up to equivalence). For a complete classification of Legendrian unknots, see \cite{Eliashberg_Fraser}. We do not need the complete classification for our construction. It will suffice to know that there are infinitely many Legendrian unknots (up to equivalence). We would like to think about front projections as superposition of basic elements. Our bracket relations will encode the basic elements. Our first result in this direction is as follows.

\begin{theorem}\label{thm1}
Let $K$ be a Legendrian knot in $(\mathbb{R}^3,\xi).$ Let $K_F$ be its front projection, then there is a polynomial in two variables, denoted by $P_K(A,r)$, (see Section \ref{Legend_Jones_poly} for detailed definition) that is an in invariant of Legendrian knot upto Legendrian isotopy. Moreover, the substitution $r=1 $ recovers the Jones polynomial of $K$.
\end{theorem}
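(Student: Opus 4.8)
The plan is to follow the architecture of the Kauffman bracket construction, replacing planar circles by front projections of Legendrian unknots and recording the extra cusp data with the variable $r$. First I would define an unnormalized two-variable bracket $\langle K_F\rangle \in \mathbb{Z}[A^{\pm 1}, r^{\pm 1}]$ by a skein relation that resolves each crossing of the front diagram in two ways. The subtlety compared to the classical case is that the front projection forbids vertical tangencies, so at a crossing exactly one of the two smoothings is an honest horizontal reconnection (staying a legitimate front), while the other reconnection has vertical tangents and must therefore be realized by inserting a pair of cusps. Mirroring Kauffman, I would weight the cusp-free smoothing by $A$ and the cusp-creating smoothing by $A^{-1}$, and attach a power of $r$ to the cusp-creating term so that $r$ bookkeeps the cusp count introduced during resolution. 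The base case assigns to a disjoint collection of Legendrian unknots a value built from the loop factor $(-A^2 - A^{-2})$ together with a power of $r$ recording the number of cusps of each component.

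Next I would pass from $\langle K_F\rangle$ to the claimed invariant $P_K(A,r)$ by multiplying by a normalization factor. In the classical theory one multiplies by $(-A^3)^{-w}$, where $w$ is the writhe, to cancel the failure of invariance under the first Reidemeister move. Here the normalization must simultaneously absorb the change in the writhe of the front and the change in cusp count, so I would calibrate the correction to be a monomial in $A$ and $r$ determined by the writhe and the cusp number of $K_F$; since these combine into the Thurston--Bennequin data of $K$, this is precisely where that invariant enters the construction.

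The heart of the argument is invariance under the Legendrian Reidemeister moves for fronts, which I would verify move by move using the Reidemeister theorem for front projections quoted in the introduction. The triple-point move is handled exactly as in the classical proof, since the skein relation is local and the cusp structure is untouched there; the purely crossing-sliding move is the analogue of the second classical move and is similarly formal because it preserves writhe and cusp count. The moves that involve cusps interacting with crossings are where the work lies: after expanding the crossings inside the local picture, I would collect the $A$- and $r$-weighted contributions of the two smoothings, evaluate the resulting Legendrian unknot pieces against the base values, and check that the total reproduces the original contribution up to the $(A,r)$-monomial that the normalization is designed to cancel.

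I expect the main obstacle to be exactly this last verification for the cusp-involving moves, which have no classical counterpart. One must set up the skein weights and the base loop value so that the $r$-powers generated by inserting cusp pairs during resolution interact correctly with the $r$-powers already present in the base values, and so that the combined $(A,r)$-monomial is killed by the normalization; getting the change in cusp number and the change in writhe to match the chosen correction factor is the delicate bookkeeping step. Once the weights are calibrated for this move, invariance under the remaining moves and the reduction to the classical case follow formally. For the final assertion, setting $r=1$ collapses all cusp bookkeeping: the skein relation becomes the ordinary Kauffman relation, the base value becomes the usual $(-A^2 - A^{-2})$ loop value, and the normalization reduces to $(-A^3)^{-w}$, so that $P_K(A,1)$ is the Kauffman-bracket definition of the Jones polynomial of the underlying smooth knot $K$.
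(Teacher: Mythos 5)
Your overall architecture matches the paper's: a two-variable bracket defined by weighting the honest horizontal smoothing by $A$ and the cusp-inserting smoothing by $A^{-1}r$, followed by a monomial normalization and a move-by-move check against the Legendrian Reidemeister moves. However, there are two concrete miscalibrations in your plan that would cause the verification to fail. First, your claim that the second move (the one creating two crossings) is ``similarly formal because it preserves writhe and cusp count'' is wrong in this setting: unlike the classical Kauffman bracket, which is genuinely invariant under R2, the Legendrian bracket here satisfies $\langle K_F'\rangle = r\,\langle K_F\rangle$ when $K_F'$ is obtained from $K_F$ by LR2, because the cross terms carry unequal $r$-weights. Consequently the normalization cannot be a function of the writhe and cusp number alone; the paper uses $(-A)^{-3\omega(K_F)}\,r^{c(K_F)/2 - l(K_F)}$, where $l(K_F)$ is the number of \emph{left-handed} crossings, precisely because $l$ changes by $1$ under LR2 (while $\omega$ and $c$ do not) and absorbs the stray factor of $r$. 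Since $l$ depends on the total crossing number and not just on $\omega$ and $c$, your proposed correction factor ``determined by the writhe and the cusp number'' is genuinely insufficient, and the resulting $P_K$ would not be LR2-invariant.

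Second, your base case assigns to each unknot component the loop value $(-A^2-A^{-2})$ times ``a power of $r$ recording the number of cusps of each component.'' The paper instead assigns the \emph{same} value $-A^2r^{-1}-A^{-2}r$ to every stabilized unknot, with the two summands carrying \emph{different} powers of $r$; the cusp count of the original front is tracked only in the global normalization. This split of $r$-powers inside the loop value is exactly what makes the cancellations in the LR1 and LR2 computations work (e.g.\ $r(-A^{-2}r-A^2r^{-1})+A^2+A^{-2}r^2=0$), and a uniform per-component power of $r$ would not reproduce them. Both issues are fixable and sit inside the ``delicate bookkeeping'' you flag, but as stated your calibration does not close, and the move you identify as trivial is in fact one of the two that forces the shape of the normalization.
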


To generalize the Khovanov homology, we first generalize the basic states that are adapted to front projections. We introduce one more grading on Khovanov chain complex to encode Legendrian
front projections arising out of resolutions. For a Legendrian knot $K$, with a front projection $ K_F$, the extension of Khovanov chain complex is denoted by $\left( C_{i,j,k} \left( K_F\right), \partial \right) $. The 
boundary map $\partial $ of this new chain complex is adapted to front projections whose square is shown to vanish. Our treatment is inspired by the definition of Khovanov homology given in \cite{Viro}. Our next results can now be stated as follows.

\begin{theorem}\label{thm2}
The homology groups $H_{i,j,k}  \left( K_F\right) $ of the chain complex $\left( C_{i,j,k} \left( K_F\right), \partial \right) $ are independent of the choice of the front projection $K_F $ of the Legendrian knot $K$. In other words, the homology groups are invariant of the Legendrian knot $K$. We refer to the groups $\{ H_{i,j,k} (K) \} $ the Legendrian Khovanov homology of $K$.
\end{theorem}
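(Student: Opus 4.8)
The plan is to follow the combinatorial strategy by which Viro establishes Reidemeister invariance of ordinary Khovanov homology, adapted to the front-projection setting. By the Legendrian Reidemeister theorem quoted in the introduction, any two front projections $K_F$ and $K_F'$ of the same Legendrian knot $K$ are connected by a finite sequence of Legendrian Reidemeister moves for fronts. Since isomorphism of triply-graded homology groups is transitive, it suffices to produce, for each such move, an isomorphism $H_{i,j,k}(K_F) \cong H_{i,j,k}(K_F')$. I would therefore fix once and for all the finite list of front moves, namely the cusp moves (the analogues of Reidemeister~I and of the cusp-passing move) and the triple-point move (the analogue of Reidemeister~III), and verify invariance one move at a time. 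One preliminary remark streamlines the normalization: the Thurston-Bennequin number is itself a Legendrian isotopy invariant, so the $tb$-grade shift built into the definition of $C_{i,j,k}$ is the \emph{same} integer for $K_F$ and $K_F'$. Consequently the normalization never has to be re-balanced between the two sides, and the entire content of the theorem is the move-by-move comparison of the shifted complexes, the genuinely new feature being the third grading $k$.

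For the moves that merely rearrange crossings I would localize: choose a disk $D$ meeting the front in the tangle where $K_F$ and $K_F'$ differ, so that the two diagrams agree outside $D$. The Viro complex is freely generated by the states obtained from resolving every crossing, so $C_{i,j,k}(K_F)$ decomposes as a module according to the resolutions of the crossings inside $D$, and $\partial$ is a morphism of complexes over the fixed data of the part of the diagram lying outside $D$. The algebraic workhorse is the cancellation (Gaussian elimination) lemma: whenever a component of $\partial$ between two generators is an isomorphism, those two generators may be deleted to give a chain-homotopy equivalent complex. For the Reidemeister-II-type cusp-passing move this exhibits an acyclic summand that cancels, leaving an explicit degree-preserving isomorphism onto the smaller complex; for the triple-point move one cancels on both sides and identifies the two reduced complexes, exactly as in the classical argument. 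At every stage I must check that the maps written down are homogeneous of degree zero in the homological and quantum gradings $(i,j)$ and that they respect the grading $k$.

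The new content, and the place where the argument departs from the classical one, is the behaviour of the grading $k$ together with the cusp moves, which have no topological counterpart. A cusp contributes to each state in the manner prescribed by the definition in Section~\ref{Legend_Jones_poly}, so I would first record this contribution precisely and then check that introducing or deleting the pair of cusps in the first Legendrian move alters the complex only by a shift matched by the change in the cusp count, in such a way that the $tb$-normalization absorbs it and the induced map is an isomorphism of triply-graded groups. This first move is the delicate one, being the Legendrian analogue of the single topological move under which the unnormalized bracket fails to be invariant; here I must verify both that $\partial^2=0$ survives the local modification and that the $k$-grading of the surviving generators transforms consistently.

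I expect the main obstacle to be exactly this bookkeeping of the third grading across the moves, and most acutely across the triple-point move, where several crossings interact at once and the cancellations must be performed while tracking $(i,j,k)$ simultaneously. A useful safeguard throughout is decategorification: passing to graded Euler characteristics should reduce each chain-level isomorphism to the corresponding identity for the two-variable polynomial $P_K(A,r)$, whose invariance is already established in Theorem~\ref{thm1}, so any discrepancy at the level of polynomials would immediately flag an error in the chain maps. Once the cusp moves and the $k$-grading have been handled, the remaining verifications are the standard homotopy-equivalence computations, and assembling them over the finite list of moves yields that $H_{i,j,k}$ depends only on the Legendrian isotopy class of $K$.
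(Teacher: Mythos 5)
Your proposal is correct and follows essentially the same route as the paper: reduce to the three Legendrian Reidemeister moves for fronts, and for each move exhibit a deformation retraction of the local Legendrian Khovanov complex onto an acyclic-complement subcomplex that is then identified, by an explicit grading-preserving isomorphism, with the complex of the diagram on the other side of the move (the paper writes the retractions $\rho_i$, homotopies $h_i$, and isomorphisms $\Phi_i$ explicitly, which is exactly what your Gaussian-elimination step produces). Your added remarks on the $tb$-normalization and the Euler-characteristic consistency check are sound but do not change the argument.
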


\begin{theorem}\label{thm4}
The graded Euler characteristic of the Khovanov homology groups $H_{i,j,k}  \left( K_F\right) $ associated to a Legendrian knot $K$ gives $P_K(A,r)$. In other words, $H_{i,j,k}  \left( K_F\right) $ give a categorification of the Jones polynomial $P_K(A,r)$.

\end{theorem}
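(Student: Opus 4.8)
The plan is to adapt the classical argument that the graded Euler characteristic of Khovanov homology recovers the Kauffman bracket, now carrying along the extra $k$-grading that encodes the Legendrian data. The argument splits into two independent parts: first, that the graded Euler characteristic is unchanged on passing from the chain groups to their homology, and second, that the chain-level graded Euler characteristic coincides, after the grade shift, with the state sum defining $P_K(A,r)$. I would begin by fixing the relevant Euler characteristic. Since $i$ is the homological grading while $j$ and $k$ are internal, the natural quantity is
\[
\chi(K_F) \;=\; \sum_{i,j,k} (-1)^{i}\, q^{j} r^{k}\, \operatorname{rk} H_{i,j,k}(K_F),
\]
where $q$ is the variable carrying the $j$-grading (related to $A$ by the standard substitution) and $r$ carries the $k$-grading, in the conventions of Section \ref{Legend_Jones_poly}.

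The first part rests on the observation that $\partial$ raises $i$ by one while preserving both $j$ and $k$. Hence for each fixed pair $(j,k)$ the family $\{C_{i,j,k}(K_F)\}_{i}$ is an ordinary chain complex of finitely generated free abelian groups, and the usual rank--nullity computation gives
\[
\sum_{i} (-1)^{i}\, \operatorname{rk} C_{i,j,k}(K_F) \;=\; \sum_{i} (-1)^{i}\, \operatorname{rk} H_{i,j,k}(K_F).
\]
Multiplying by $q^{j} r^{k}$ and summing over $(j,k)$ transports the computation to the chain level, so that $\chi(K_F) = \sum_{i,j,k} (-1)^{i} q^{j} r^{k} \operatorname{rk} C_{i,j,k}(K_F)$; torsion is irrelevant here since only ranks enter.

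The second part is the direct evaluation of this chain-level sum over the cube of resolutions. Each state contributes a tensor product, over its basic Legendrian components, of the graded module assigned to a single such component; because the graded dimension of a tensor product factors as the product of graded dimensions, each state contributes one factor of the corresponding basic graded dimension per component, weighted by the sign $(-1)^{i}$ and the monomial $q^{j} r^{k}$ recorded by its position in the cube. Collecting these contributions reproduces precisely the unnormalized bracket state sum of Section \ref{Legend_Jones_poly}, and the grade shifts built into $C_{i,j,k}$ --- in particular the shift by the Thurston--Bennequin invariant that carries the $k$-grading --- supply exactly the overall power of $A$ (and $r$) needed to pass from the bracket to $P_K(A,r)$. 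Setting $r=1$ collapses this to the classical computation and recovers the Jones polynomial, consistent with Theorem \ref{thm1}.

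I expect the main obstacle to be the bookkeeping in this second part for the $k$-grading and the Thurston--Bennequin shift. In the classical case every ingredient is routine; here one must verify that the $r$-weights attached to the basic Legendrian unknots at the chain level, together with the grade shift, reproduce term for term the $r$-dependence appearing in the skein-theoretic definition of $P_K(A,r)$. Concretely this means matching, crossing by crossing and cusp by cusp, the front-projection resolutions used to build $(C_{i,j,k}(K_F),\partial)$ against those used to define the bracket. Once this identity is established for a single front projection $K_F$, the invariance already furnished by Theorem \ref{thm2} guarantees it is independent of the chosen projection, completing the proof.
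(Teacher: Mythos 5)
Your proposal follows essentially the same route as the paper: the graded Euler characteristic is passed to the chain level (legitimate because $\partial$ preserves $j$ and $k$ and raises $i$ by one), and the resulting sum $\sum_{S}(-1)^{i(S)}q^{j(S)}r^{k(S)}$ over enhanced Legendrian states is identified with $P_K(A,r)$. The ``bookkeeping'' you defer is precisely the paper's derivation of Equation \eqref{eq5}, where the substitution $q=-A^{-2}$ and the definitions of $i(S)$, $j(S)$, $k(S)$ absorb the normalization $(-A)^{-3\omega(K_F)}r^{\frac{c}{2}-l(K_F)}$; note that this $k$-shift is the cusp and left-crossing count built into the grading, while the identification with the Thurston--Bennequin invariant is a separate observation (Theorem \ref{thm6}) not needed here.
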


Thurston-Bennequin invariant naturally arises in the construction of the Khovanov homology groups. More precisely, it arises as the grade shift (depending only on the given Legendrian knot) in the new grading that we introduced.

\begin{theorem}\label{thm3}
For a given Legendrian knot $K$ and $i, j \in \mathbb{Z} $, $H_{i,j}(K)=H_{i,j-tb(K),j}(K)$, where $H_{i,j}(K)$ is the Khovanov homology group for the smooth knot corresponding to $K$.
\end{theorem}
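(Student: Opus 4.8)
The plan is to prove the identity at the chain level by exhibiting a degree-shifting isomorphism between the two complexes, so that the asserted equality of homology groups follows formally. The key observation is that both the classical Khovanov complex $C_{i,j}(K_F)$ of the underlying smooth diagram and the Legendrian complex $C_{i,j,k}(K_F)$ are built by resolving the \emph{same} set of crossings of the front projection $K_F$: the cusps are not crossings and are never resolved, so they affect only the combinatorial type of the resulting states and the numerical value of the gradings, not the index set of generators. I would therefore begin by fixing the canonical bijection between the generators of the two complexes, namely complete resolutions of the crossings together with a labeling of each resulting component by $1$ or $x$.

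First I would check that the two homological gradings agree. By construction both are computed from the number of $1$-resolutions corrected by $n_-$, and since this count depends only on the crossings and their signs (not on the cusps), the first index $i$ matches on every generator. Next I would verify that the third Legendrian grading $k$ is, by its definition in the construction of $C_{i,j,k}(K_F)$, exactly the classical Khovanov quantum grading: the sum of the component labels, shifted by the number of $1$-resolutions and by the writhe $w(K_F)=n_+-n_-$. This identifies $k=j$ on every generator, which is precisely the third index appearing in the statement.

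The heart of the argument is the computation of the middle grading, and this is where I expect the main obstacle to lie. I would show that the new grading introduced on $C_{i,j,k}(K_F)$ differs from $k$ by a single global constant, and that this constant is exactly $tb(K)$. The point is that the middle grading is normalized by the cusp data of the front rather than by the raw writhe, so the difference $k-(\text{middle})$ collects the combination $w(K_F)-c(K_F)$ of writhe minus the number of right cusps, which is precisely $tb(K)$. The technical work is the cusp-versus-writhe bookkeeping: one must confirm that every generator receives the same shift and that no residual, diagram-dependent term survives. The appearance of the invariant combination $w-c=tb$ here is also exactly what is consistent with the invariance established in Theorem \ref{thm2}, and it explains why the Thurston-Bennequin number emerges as a global grade shift rather than as a genuinely new per-generator quantity.

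Finally I would compare the differentials. The Legendrian boundary map $\partial$ raises only the homological index and acts on labeled components by the same merge and split maps as the Khovanov differential; in particular it preserves $k$, and since the middle grading is $k-tb(K)$ with $tb(K)$ constant, it automatically preserves the middle grading as well. Combining the generator bijection with the grading identities $i\mapsto i$, $k\mapsto j$, and $(\text{middle})\mapsto j-tb(K)$ then shows that $C_{i,\,j-tb(K),\,j}(K_F)$ and $C_{i,j}(K_F)$ are isomorphic as chain complexes after regrading. Passing to homology yields $H_{i,j}(K)=H_{i,\,j-tb(K),\,j}(K)$, as claimed.
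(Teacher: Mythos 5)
Your proposal is correct and follows essentially the same route as the paper: the paper first proves (as Theorem \ref{thm6}) that $j(S)-k(S)=tb(K)$ for every enhanced Legendrian state --- exactly the ``cusp-versus-writhe bookkeeping'' you identify, using $c(S)=c(K_F)+2B(S)$ and $tb(K)=\omega(K_F)-\tfrac{1}{2}c(K_F)$ --- and then observes, as you do, that the generators, the gradings $i$ and $j$, and the differential coincide with those of Viro's Khovanov complex of the underlying smooth diagram, so that $C_{i,k,j}=C_{i,j-tb(K),j}=C_{i,j}$ and the boundary maps agree after dropping $k$. The one step you leave as a plan (verifying the shift is the same on every generator) is precisely the short computation the paper carries out, and it goes through exactly as you anticipate.
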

As a consequence we get the following corollary.
 \begin{corollary}\label{cor1}
 Let $K$ and $K'$ be two (smoothly) isotopic knots then they have same Legendrian Khovanov homology if and only if $tb(K)=tb(K').$
 \end{corollary}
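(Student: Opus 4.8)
The plan is to deduce everything from Theorem~\ref{thm3}, which identifies the triply-graded Legendrian Khovanov homology of $K$ with the ordinary (bigraded) Khovanov homology of the underlying smooth knot, up to a shift in the new grading governed by $tb(K)$. First I would record the easy input: since $K$ and $K'$ are smoothly isotopic, ordinary Khovanov homology gives $H_{i,j}(K) \cong H_{i,j}(K')$ for all $i,j$.

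Next I would extract from Theorem~\ref{thm3} the precise support of the Legendrian theory. The identification sends the smooth bigrading $(i,j)$ to the trigrading $(i, j - tb(K), j)$, so the only trigradings $(i,m,k)$ at which the group $H_{i,m,k}(K)$ of Theorem~\ref{thm2} can be nonzero are those on the diagonal $m = k - tb(K)$, where $H_{i,k-tb(K),k}(K) \cong H_{i,k}(K)$. In other words, the Legendrian Khovanov homology of $K$ is completely determined by the pair consisting of its ordinary Khovanov homology and the single integer $tb(K)$, the latter recorded as the constant offset between the last two gradings along the support. Establishing this concentration statement, namely that there are no further nonzero groups off the diagonal $m = k - tb(K)$, is the one point that genuinely needs care; I would argue it directly from the grade-shift mechanism underlying Theorem~\ref{thm3}, i.e. that the new grading is built so that on every generator it differs from the quantum grading by exactly $tb(K)$.

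For the backward implication, assume $tb(K) = tb(K')$. Then the two supports coincide, and for every $(i,k)$ one has $H_{i,k-tb(K),k}(K) \cong H_{i,k}(K) \cong H_{i,k}(K') \cong H_{i,k-tb(K'),k}(K') = H_{i,k-tb(K),k}(K')$, using smooth invariance in the middle and $tb(K)=tb(K')$ at the end, while all off-diagonal groups vanish for both knots. Hence the two Legendrian Khovanov homologies agree.

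For the forward implication, suppose the Legendrian Khovanov homologies of $K$ and $K'$ agree. Since ordinary Khovanov homology is nontrivial for any knot (its graded Euler characteristic is the nonzero Jones polynomial, by Theorem~\ref{thm4}), there is a bigrading $(i,k)$ with $H_{i,k}(K) \neq 0$. By the support description this nonzero group sits at trigrading $(i,k-tb(K),k)$ for $K$, so the assumed equality forces $H_{i,k-tb(K),k}(K') \neq 0$; but the support of $K'$ concentrates on $m = k - tb(K')$, whence $k - tb(K) = k - tb(K')$, i.e. $tb(K) = tb(K')$. The main obstacle throughout is precisely the concentration/support claim of the second paragraph; once it is in place, both directions are immediate.
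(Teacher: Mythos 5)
Your proposal is correct and follows essentially the same route as the paper: the concentration-on-a-diagonal claim you isolate is exactly the content of Theorem~\ref{thm6} (whose proof shows $j(S)-k(S)=tb(K)$ on every enhanced state), and combining it with Theorem~\ref{thm3} and smooth invariance of ordinary Khovanov homology yields both directions. The one point you make explicit that the paper's one-line proof leaves implicit is the non-vanishing of Khovanov homology needed for the forward implication; this is a welcome clarification but not a different argument.
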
 

 \noindent {\bf Acknowledgements}: The second author is supported by the CSIR grant 09/1020(0152)/2019-EMR-I, DST, Government of India.

\section{Preliminaries}
\begin{definition}
Let $M$ be a $3$-dimensional manifold. A \emph{contact
structure} on $M$ is a hyperplane field $\xi$ such that for any locally defined differential 1–form $\alpha$ with $ker\alpha= \xi $, we have the following nowhere integrability condition:
$$\alpha \wedge d\alpha  \neq 0.$$
Such an $\alpha$ is called \emph{contact form} and $(M,\xi)$ is called \emph{contact manifold}.
\end{definition}

\begin{example}
On $\mathbb{R}^3$, with coordinates $(x,y,z)$, the one form $\alpha_{0}= dz-ydx$ is a contact form and the contact structure given by this form is called the \emph{standard contact structure} on $\mathbb{R}^3$. It is denoted by $\xi_{st}$. Throughout this article, the contact form $\alpha_0 $ and contact structure $\xi_{st} $ on $\mathbb{R}^3 $ is fixed. 
\end{example}

\begin{definition}
         A \emph{Legendrian knot} in  $(\mathbb{R}^3,\xi_{st}) $ is a smooth embedding $\gamma : \mathbb{S}^1 \rightarrow M$, such that,  $\gamma '(\theta) \in \xi_{\gamma(\theta)}  \text{ for all } \theta \in \mathbb{S}^1 .$
        \end{definition}

 \begin{definition}
        Let $\gamma(t)=(x(t),y(t),z(t))$ be a parametrized Legendrian knot in $(\mathbb{R}^3,\xi_{st})$.\\
        Then a \emph{front projection} of $\gamma$ is a projection in $\mathbb{R}^2$, $\gamma_F : \mathbb{S}^1 \rightarrow \mathbb{R}^2$, given by $\gamma_F(t)=(x(t),z(t)).$\\
        \end{definition}

\begin{figure}[h!]
    \centering
    \includegraphics[scale=0.55]{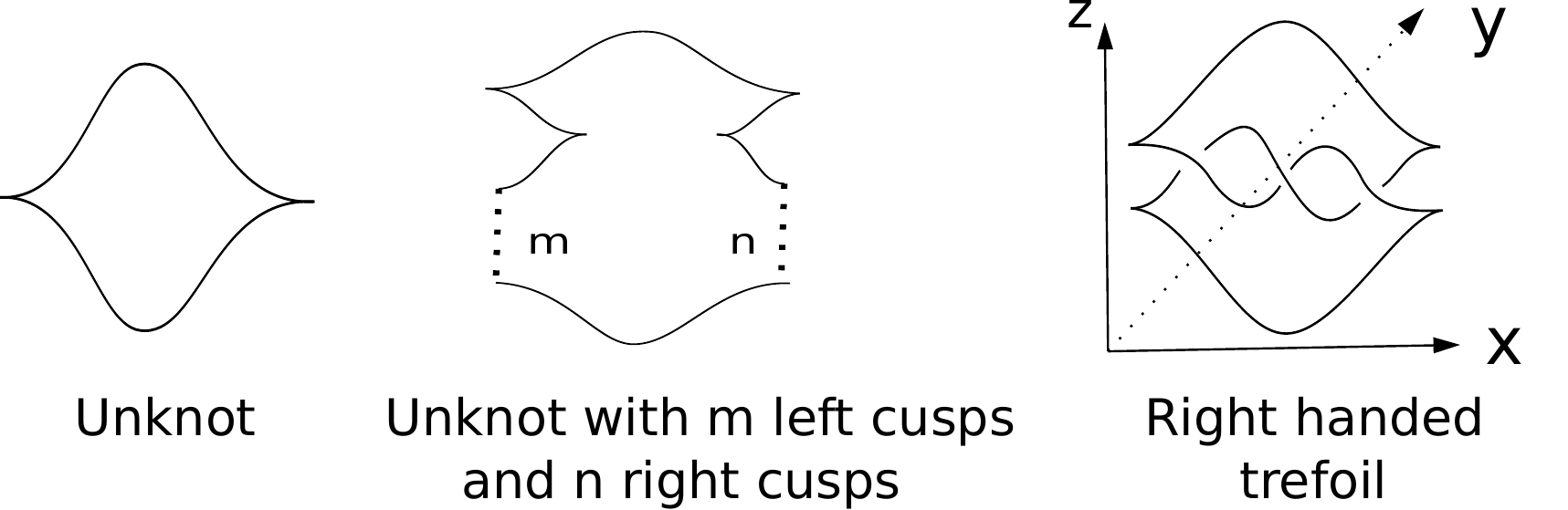}
    \caption{Examples of front projections}
    \label{fig:egoffront}
\end{figure}
Note that a parametrization of Legendrian knot $\gamma $ satisfies the equation $z'(t) = y(t)x'(t) $. Examples of \textit{generic} front projections are shown in Figure \ref{fig:egoffront}. There is no point on the
front projection where the tangent is vertical, instead we have cusps on both right and left side. This is in fact true for any generic
front projection of Legendrian knots, see \cite{Etnyre} for more details. At the cusp points the $x'(t)=z'(t)=0$ and $y=0 $. Away from the cusps, the $y$-coordinate can be
recovered by $y(t)=\frac{z'(t)}{x'(t)}$. We note that at a crossing the strand with lower slope will be above in front projection, see the third front in Figure \ref{fig:egoffront}.

\begin{definition}
A \emph{front diagram} is an immersion of disjoint union of circles in $x$-$z$ plane such that the following holds:
\begin{itemize}
    \item The immersion is an embedding at all but finitely many points. At these points, we have a cusp or a double point.
    \item There is no vertical tangency at any point.
\end{itemize}
\end{definition}
We note that every Legendrian link gives rise to a front diagram by taking a generic front projection. Further, given a front diagram, there is a Legendrian link whose front projection agrees with the front diagram. 
\begin{definition}\label{def1}
Two Legendrian knots $K,K'$ are said to be \emph{Legendrian equivalent} if there exist a smooth map $F: \mathbb{S}^1\times [0,1] \rightarrow (\mathbb{R}^3,\xi)$ such that $F_0$ is identity, $F_1(K)=K'$ and for each $t\in [0,1], F_t$ is a Legendrian embedding.
\end{definition}

\begin{remark}
 Existence of  an isotopy $F$ in Definition \ref{def1} implies existence of contact isotopy of $(\mathbb{R}^3,\xi_{st})$ taking $K$ to $K'$.
\end{remark} 
\begin{theorem}
Two Legendrian knots $K$ and $K'$ are Legendrian equivalent if and only if we can obtain $K_F$ from $K'_F$ by a sequence of finitely many local moves as shown in Figure \ref{fig:LRmoves}, where $K_F$ and $K_F'$ denote tha front projections of $K$ and $K'$ respectively.
\begin{figure}[!htbp]
    \centering
    \includegraphics[scale=0.55]{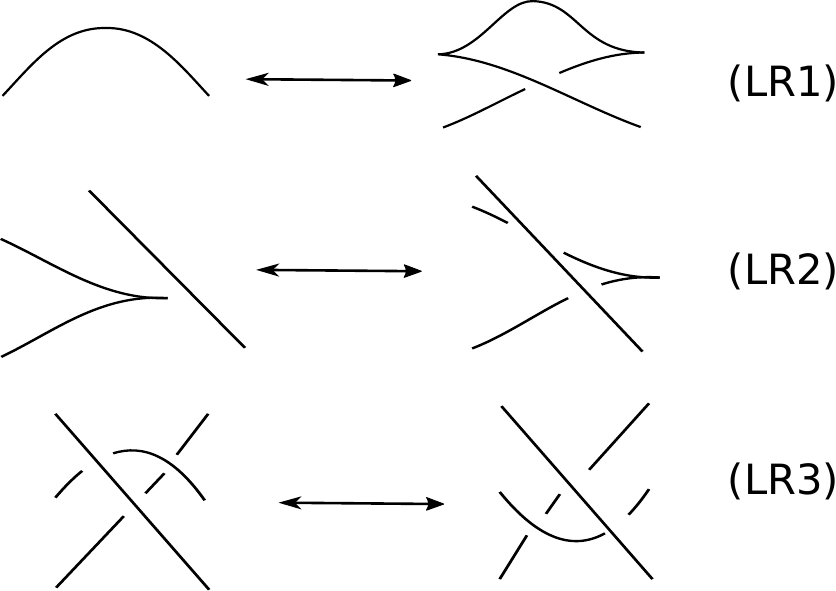}
    \caption{ LR moves for front projection}
    \label{fig:LRmoves}
\end{figure}

These moves are called Legendrian Reidemeister moves. We refer to these moves as LR1, LR2 and LR3 respectively.
\end{theorem}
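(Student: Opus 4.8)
The plan is to prove the two implications separately. The reverse direction --- that fronts related by LR moves come from Legendrian equivalent knots --- is the more routine one, so I would dispatch it first. Each of LR1, LR2, LR3 is a purely local modification of a front diagram supported in a small disk, outside of which the two fronts agree. Since the excerpt records that every front diagram lifts to a Legendrian knot by recovering the $y$-coordinate as $y(t)=z'(t)/x'(t)$ away from cusps (with $y=0$ at cusps), I would exhibit, for each move, a one-parameter family of front diagrams interpolating between the two sides, chosen so that at every parameter value the diagram still has no vertical tangency and only cusps and transverse double points as singularities. Lifting this family pointwise yields a smooth family of Legendrian embeddings, i.e. a Legendrian isotopy; concatenating the finitely many such local isotopies (with the identity elsewhere) produces the required $F$ of Definition \ref{def1}.

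For the forward direction I would run a genericity argument on the one-parameter family supplied by the Legendrian isotopy. Given $F:\mathbb{S}^1\times[0,1]\to(\mathbb{R}^3,\xi_{st})$ with each $F_t$ a Legendrian embedding, consider the associated family of front projections $(K_t)_F$. After a $C^\infty$-small perturbation of $F$ through Legendrian isotopies --- carried out at the level of front diagrams and then re-lifted via the correspondence $y=z'/x'$, so that the Legendrian condition is preserved automatically --- I would arrange that the family is generic: for all but finitely many parameter values $t_1<\dots<t_m$ the front $(K_t)_F$ is a generic front diagram and the diagrams for nearby $t$ differ only by an ambient isotopy of the $x$-$z$ plane, while at each $t_i$ the front passes through exactly one codimension-one degeneration.

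The heart of the argument is the classification of these codimension-one degenerations. The constraints defining a front diagram --- no vertical tangency, only cusps and transverse double points --- cut out the generic stratum, and I would enumerate the ways a generic path crosses the discriminant: (i) two branches becoming tangent and then crossing, giving a Legendrian Reidemeister II event; (ii) three branches passing through a common triple point, giving LR3; and (iii) degenerations involving a cusp, namely the birth or death of a cusp pair and a strand sweeping across a cusp, which produce LR1. Using the slope condition that at a crossing the strand of lower slope lies above, I would check that each such event is realized on the front by exactly one of the pictures in Figure \ref{fig:LRmoves} (up to planar isotopy), and that no vertical-tangency degeneration can occur for a family of genuine fronts. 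Assembling the pieces, the passage from $K$ to $K'$ decomposes into planar isotopies interspersed with finitely many LR moves.

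The main obstacle I anticipate is the transversality and classification step: one must prove that a generic Legendrian isotopy meets the discriminant only in the listed strata and in no others. This requires a jet-transversality argument in the space of front diagrams adapted to the Legendrian condition, so that perturbations respect the relation $z'=y\,x'$, together with a verification that LR1--LR3 is exhaustive --- in particular that moves one might naively expect, such as passing a vertical tangency, are excluded by the front-diagram constraints, while any cusp-strand interaction reduces to a combination of the given moves. Performing the perturbation \emph{through Legendrian knots} rather than through arbitrary plane curves is the delicate point, and I would rely on the correspondence between front diagrams and Legendrian knots recorded above to transfer genericity statements about plane curves back into the Legendrian setting.
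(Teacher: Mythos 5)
The paper does not actually prove this theorem: it is stated in the Preliminaries as a standard background result on front projections (see the survey of Etnyre cited there), so there is no in-paper argument to compare yours against, and your sketch has to stand on its own. Its architecture is the standard one: the reverse implication by lifting a local one-parameter family of fronts through the correspondence $y=z'/x'$, and the forward implication by putting the Legendrian isotopy in general position and classifying the codimension-one degenerations of the moving front. Both of those framing choices are sound, and your worry about perturbing \emph{through} Legendrian knots is correctly resolved by working at the level of fronts and re-lifting.

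The genuine gap is in the classification of discriminant crossings, which you rightly call the heart of the argument and which you state incorrectly. A tangency of two smooth branches of a front forces the two preimage points to have the same $x$, the same $z$, and the same slope, hence the same $y=z'/x'$: it is a self-intersection of the Legendrian curve in $\mathbb{R}^3$ and therefore cannot occur in an isotopy through embeddings. So your event (i) never happens and is not the source of the second move. The correct dictionary is: the swallowtail (simultaneous birth or death of a cusp pair together with one crossing) gives LR1; a smooth strand passing across a cusp point gives LR2 --- this is precisely why the front version of the second Reidemeister move involves a cusp rather than a tangency; and a triple point gives LR3. Your item (iii) conflates the first two of these and misassigns the cusp-passage event to LR1. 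With the dictionary corrected, and with branch self-tangencies recorded as \emph{excluded} rather than realized as a move, your outline becomes the standard proof; as written, the matching against Figure \ref{fig:LRmoves} would fail at exactly the step you identify as decisive.
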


\section{Construction of Jones Polynomial for Legendrian Knots}\label{Legend_Jones_poly}
 
The objective of this section is to generalize the Jones polynomial for Legendrian knots. As the Jones polynomial of a (topological) knot can be obtained from the bracket 
polynomial making appropriate variable change, we start by giving a generalization of the bracket polynomial for Legendrian knots. Since we are dealing with the front projection $K_F$, weresolve each crossing in $K_F$ in such a way that we end up with the front
projection of unknots. There are two ways of resolving a crossing, we call them $A$-resolution (horizontal resolution) and $B$-resolution (vertical resolution), see Figure \ref{fig:resolution1}. The $A$-resolution is the same as the one that we have for smooth knots but the $B$-resolution adds two more cusps to the front projection. The
reason for having this particular $B$-resolution is that we do not want the tangent to be vertical at any point of the new front projection that we get by resolving $K_F$. Note 
that we have only one type of crossing in the front projection where strand with negative slope is above the strand with positive slope.

\begin{figure}[h!]
    \centering
    \includegraphics{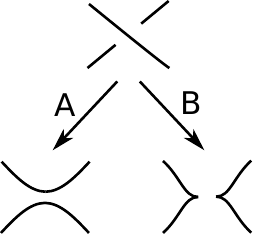}
    \caption{Resolutions of a crossing}
    \label{fig:resolution1}
\end{figure}

We now give a procedure to obtain a bracket polynomial for front projections. We list out the bracket relations below.

\begin{enumerate}
\item
  $\left\langle \embeddpdf{-4.5}{3}{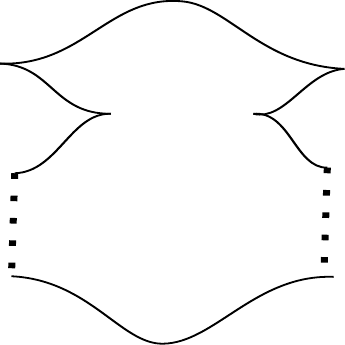} \right\rangle=-A^{2}r^{-1}-A^{-2}r,\hspace{5pt} \text{for any stabilized unknot with no crossings} $

\item
  $\left\langle K_F \sqcup \embeddpdf{-4.5}{3}{stand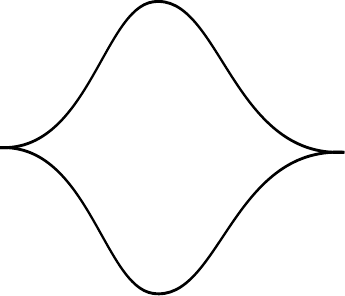}\right\rangle=(-A^{2}r^{-1}-A^{-2}r)\langle K_F\rangle$

\item
  $\left\langle\embeddpdf{-4.5}{3}{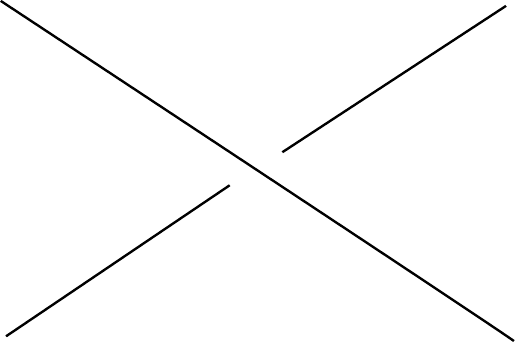}\right\rangle=
  A\left\langle \embeddpdf{-4.5}{3}{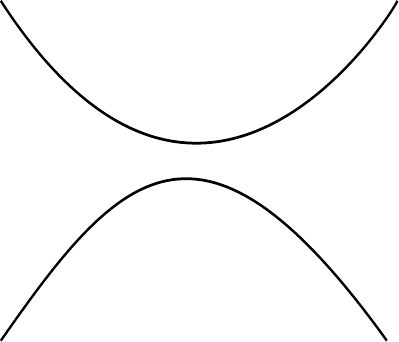}\right\rangle + A^{-1}r \left\langle \embeddpdf{-4.5}{3}{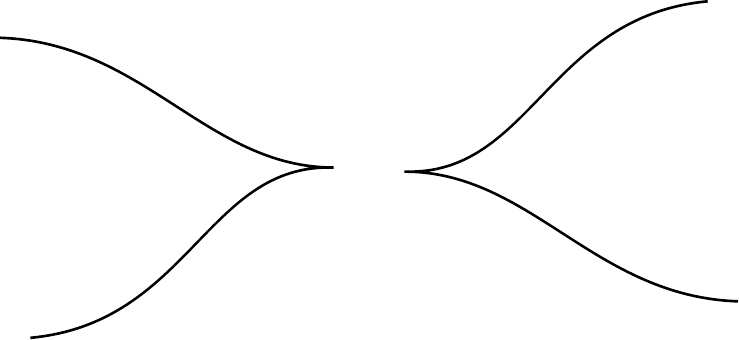}  \right\rangle$

\end{enumerate}
Given a front projection, we resolve a crossing at each step to get front diagrams with number of crossings reduced by one. By applying the above bracket relations in each step,
we arrive at a polynomial in two variables. We refer to it as the bracket polynomial. The procedure of obtaining bracket polynomial is illustrated in Example \ref{example1}, \ref{example2} and \ref{example3}.
The bracket polynomial depends on the front projection and is not an invariant of Legendrian knot upto Legendrian isotopy. However, the polynomial

\begin{eqnarray}\label{eq4}
P_K(A,r)=(-A)^{-3\omega(K_F)}r^{\frac{c\left({K_F}\right)}{2}-l(K_F)}\left\langle K_F\right\rangle
\end{eqnarray}

turns out to be an invariant of Legendrian knot upto Legendrian isotopy as claimed in Theorem \ref{thm1}.

In Equation \eqref{eq4}, the symbol $l(K_F)$ denotes the number of left handed crossings in $K_F$ and $c(K_F)$ denotes the number of cusps in $K_F$. For brevity of notation, we use the symbol $c$ instead of $c(K_F) $ whenever $ K_F$ is fixed. Note that, right handed and left
handed crossings in the front projection of a knot are independent of an orientation on the knot, however it is not the case for links with more than one components. So, now onwards we will be considering unoriented knots and oriented links.

Note that $P_K(A,r=1)$ is that bracket polynomial of the underlying smooth knot $K$ and by substituting $A=t^{-\frac{1}{4}}$ in $P_K(A,1)$
we get the Jones polynomial of $K$. We will be referring to $P_K(A,1)$ as the \textit{Jones polynomial} of the smooth knot $K$ and $P_K(A,r)$ as the \emph{Legendrian Jones polynomial} of the Legendrian knot $K$.

\begin{proof}[Proof of Theorem \ref{thm1} ]
Start with a front projection $K_F$ of a Legendrian knot $K$. We will show that $
P_K(A,r)$ does not change under the local changes created in the front projection by the Legendrian Reidemeister moves.

Under the first Legendrian Reidemeister move, the number of left handed crossings stays unchanged where the number of cusps changes by $\pm 2$ and writhe changes by $\pm 1$. Then, 
{\setlength{\abovedisplayskip}{10pt}
\begin{flalign*}
P_{\embeddpdf{-1}{1}{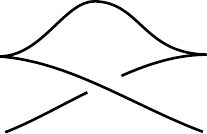}}(A,r) &=(-A)^{-3\omega(\embeddpdf{-1}{1}{Lr1for.pdf})}r^{\frac{c}{2}-l\left(\embeddpdf{-1}{1}{Lr1for.pdf}\right)}\left\langle \embeddpdf{-3}{2}{Lr1for.pdf}\right\rangle \\
&=(-A)^{-3\omega \left(\embeddpdf{-1}{1}{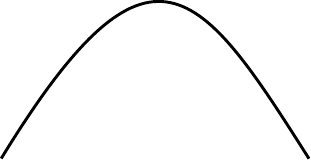} \right)-3}r^{\frac{c}{2}-l\left(\embeddpdf{-1}{1}{Lr11back.pdf}\right)+1}\left\{A\left\langle\embeddpdf{-3}{2}{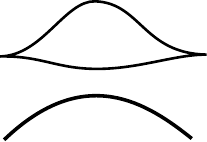}\right\rangle+A^{-1}r\left\langle\embeddpdf{-3}{2}{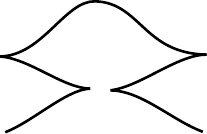}\right\rangle\right\}\\
&=(-A)^{-3\omega\left(\embeddpdf{-1}{1}{Lr11back.pdf}\right)}r^{\frac{c}{2}-l\left(\embeddpdf{-1}{1}{Lr11back.pdf}\right)}(-A)^{-3}r\left\{A(-A^{-2}r-A^{2}r^{-1})\left\langle\embeddpdf{-2}{1.5}{Lr11back.pdf}\right\rangle+A^{-1}r\left\langle\embeddpdf{-3}{2}{Lr1B.pdf}\right\rangle\right\}\\
&=(-A)^{-3\omega\left(\embeddpdf{-1}{1}{Lr11back.pdf}\right)}r^{\frac{c}{2}-l\left(\embeddpdf{-1}{1}{Lr11back.pdf}\right)}\left\langle \embeddpdf{-2}{1.5}{Lr11back.pdf}\right\rangle\\
&=P_{\embeddpdf{-1}{1}{Lr11back.pdf}}(A,r)
\end{flalign*}}
Hence, $P_K(A,r)$ is inavariant under the first Legendrian Reidemeister move.\\

Now, under the second Legendrian Reidemeister move, writhe and the number of cusps stay unchanged whereas the number of left handed crossings changes by $\pm 1$. Therefore, we have

\begin{align}\label{eq1}
\nonumber P_{\hspace{2pt}\embeddpdf{-1}{2}{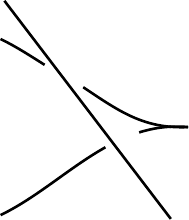}}(A,r)
&=(-A)^{-3\omega\left(\embeddpdf{-1.75}{1.5}{Lr2for1.pdf}\right)}r^{\frac{c}{2}-l\left(\embeddpdf{-1.75}{1.5}{Lr2for1.pdf}\right)}\left\langle 
\embeddpdf{-4}{3}{Lr2for1.pdf}\right\rangle \\
&=(-A)^{-3\omega\left(\embeddpdf{-1.75}{1.5}{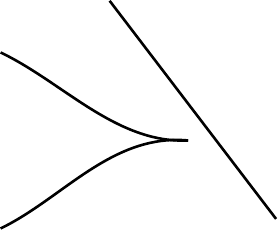}\right)}r^{\frac{c}{2}-l\left(\embeddpdf{-1.75}{1.5}{Lr2back1.pdf}\right)}r^{-1}\left\langle 
\embeddpdf{-4}{3}{Lr2for1.pdf}\right\rangle 
\end{align}

 The bracket polynomial for the front projection with two extra crossings created by  Legendrian Reidemeister  will look like
\begin{flalign*}
\left\langle \embeddpdf{-4}{3}{Lr2for1.pdf}\right\rangle &=A^2\left\langle\embeddpdf{-4}{3}{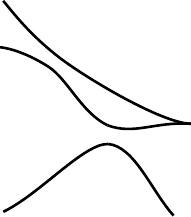}\right\rangle+A^{-1}Ar\left\langle\embeddpdf{-4}{3}{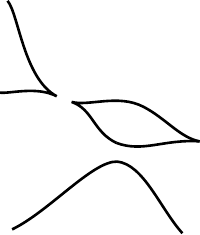}\right\rangle+A^{-1}Ar\left\langle\embeddpdf{-4}{3}{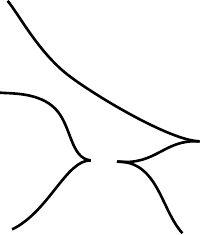}\right\rangle+A^{-2}r^2\left\langle\embeddpdf{-4}{3}{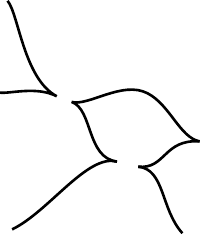}\right\rangle\\
&=A^2\left\langle\embeddpdf{-4}{3}{Lr2aa.pdf}\right\rangle+r(-A^{-2}r-A^2r^{-1})\left\langle\embeddpdf{-4}{3}{Lr2aa.pdf}\right\rangle+r\left\langle\embeddpdf{-4}{3}{Lr2ab.pdf}\right\rangle+A^{-2}r^2\left\langle\embeddpdf{-4}{3}{Lr2aa.pdf}\right\rangle\\
&=r\left\langle\embeddpdf{-4}{3}{Lr2ab.pdf}\right\rangle+\{r(-A^{-2}r-A^2r^{-1})+A^2+A^{-2}r^2\}\left\langle\embeddpdf{-4}{3}{Lr2aa.pdf}\right\rangle\\
&=r\left\langle\embeddpdf{-4}{3}{Lr2ab.pdf}\right\rangle.\\
\end{flalign*}

Thus, substituting $\left\langle \embeddpdf{-4}{3}{Lr2for1.pdf}\right\rangle=r\left\langle\embeddpdf{-4}{3}{Lr2ab.pdf}\right\rangle$ in Equation \eqref{eq1}, we get
$$P_{\hspace{2pt}\embeddpdf{-1}{2}{Lr2for1.pdf}}(A,r)=P_{\embeddpdf{-1}{2}{Lr2back1.pdf}}(A,r).$$
Therefore, we have the inavariance of $P_K(A,r)$ under second Legendrian Reidemeister move as well.

Finally, under the third Legendrian Reidemeister move, we know that the writhe, the number of cusps and the number of left handed crossings, all stay unchanged. So, we have

\begin{flalign*}
P_{\hspace{4pt}\embeddpdf{-2}{1.5}{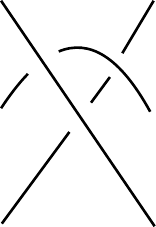}}(A,r)
&=(-A)^{-3\omega\left(\embeddpdf{-2}{1.5}{LR3back.pdf}\right)}r^{\frac{c}{2}-l\left(\embeddpdf{-2}{1.5}{LR3back.pdf}\right)}\left\langle \embeddpdf{-4}{3}{LR3back.pdf}\right\rangle \\
&=(-A)^{-3\omega\left(\embeddpdf{-2}{1.5}{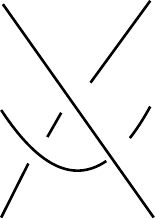}\right)}r^{\frac{c}{2}-l\left(\embeddpdf{-2}{1.5}{LR3for.pdf}\right)}\left\langle \embeddpdf{-4}{3}{LR3back.pdf}\right\rangle.
\end{flalign*}
Now, we only need to show that the bracket polynomials for the front projections before and after the third Legendrian Reidemeister move are equal. 

\begin{flalign*}
\left\langle \embeddpdf{-4}{3}{LR3for.pdf}\right\rangle =& A^3\left\langle \embeddpdf{-4}{3}{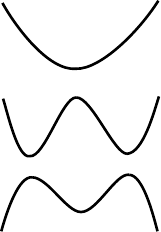}\right\rangle +Ar\left\langle \embeddpdf{-4}{3}{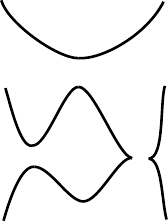}\right\rangle+Ar\left\langle \embeddpdf{-4}{3}{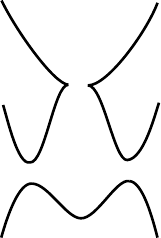}\right\rangle+Ar\left\langle \embeddpdf{-4}{3}{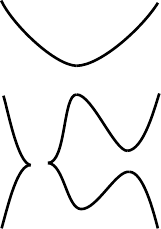}\right\rangle\\
&+A^{-1}r^2\left\langle \embeddpdf{-4}{3}{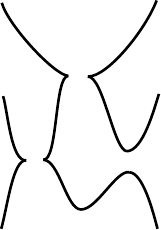}\right\rangle+A^{-1}r^2\left\langle \embeddpdf{-4}{3}{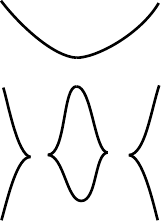}\right\rangle+A^{-1}r^2\left\langle \embeddpdf{-4}{3}{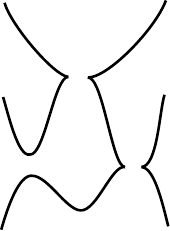}\right\rangle+A^{-3}r^3\left\langle \embeddpdf{-4}{3}{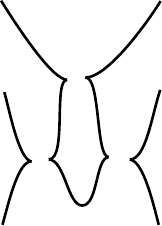}\right\rangle\\
=& A^3\left\langle \embeddpdf{-4}{3}{LR3AAA.pdf}\right\rangle +Ar\left\langle \embeddpdf{-3}{3}{LR3AAB.pdf}\right\rangle+Ar\left\langle \embeddpdf{-4}{3}{LR3ABA.pdf}\right\rangle+Ar\left\langle \embeddpdf{-4}{3}{LR3AAB.pdf}\right\rangle\\
&+A^{-1}r^2\left\langle \embeddpdf{-4}{3}{LR3BBA.pdf}\right\rangle+A^{-1}r^2(-A^{-2}r-A^2r^{-1})\left\langle \embeddpdf{-4}{3}{LR3AAB.pdf}\right\rangle\\
&+A^{-1}r^2\left\langle \embeddpdf{-4}{3}{LR3ABB.pdf}\right\rangle+A^{-3}r^3\left\langle \embeddpdf{-4}{3}{LR3AAB.pdf}\right\rangle\\
=& A^3\left\langle \embeddpdf{-4}{3}{LR3AAA.pdf}\right\rangle
+Ar\left\langle \embeddpdf{-4}{3}{LR3ABA.pdf}\right\rangle+Ar\left\langle\embeddpdf{-4}{3}{LR3AAB.pdf}\right\rangle+A^{-1}r^2\left\langle \embeddpdf{-4}{3}{LR3BBA.pdf}\right\rangle+A^{-1}r^2\left\langle \embeddpdf{-4}{3}{LR3ABB.pdf}\right\rangle
\end{flalign*}
Now, we look at the other term and expand it as follows:
\begin{flalign*}
\left\langle \embeddpdf{-4}{3}{LR3back.pdf}\right\rangle=&A^3\left\langle \embeddpdf{-4}{3}{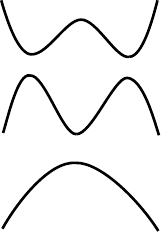}\right\rangle +Ar\left\langle \embeddpdf{-4}{3}{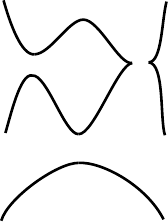}\right\rangle+Ar\left\langle \embeddpdf{-4}{3}{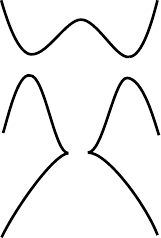}\right\rangle+Ar\left\langle \embeddpdf{-4}{3}{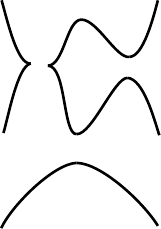}\right\rangle\\
&+A^{-1}r^2\left\langle \embeddpdf{-4}{3}{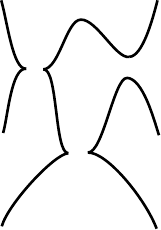}\right\rangle+A^{-1}r^2\left\langle \embeddpdf{-4}{3}{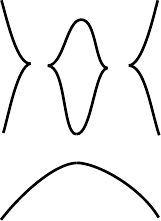}\right\rangle+A^{-1}r^2\left\langle \embeddpdf{-4}{3}{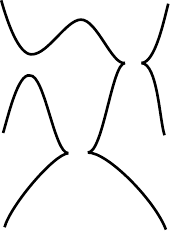}\right\rangle+A^{-3}r^3\left\langle \embeddpdf{-4}{3}{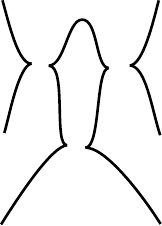}\right\rangle\\
=&A^3\left\langle \embeddpdf{-4}{3}{LR3aaa.pdf}\right\rangle +Ar\left\langle \embeddpdf{-3}{3}{LR3aab.pdf}\right\rangle+Ar\left\langle \embeddpdf{-4}{3}{LR3aba.pdf}\right\rangle+Ar\left\langle \embeddpdf{-3}{3}{LR3aab.pdf}\right\rangle\\
&+A^{-1}r^2\left\langle \embeddpdf{-4}{3}{LR3bba.pdf}\right\rangle+A^{-1}r^2(-A^{-2}r-A^2r^{-1})\left\langle \embeddpdf{-4}{3}{LR3aab.pdf}\right\rangle\\
&+A^{-1}r^2\left\langle \embeddpdf{-4}{3}{LR3abb.pdf}\right\rangle+A^{-3}r^3\left\langle \embeddpdf{-4}{3}{LR3aab.pdf}\right\rangle\\
=&A^3\left\langle \embeddpdf{-4}{3}{LR3aaa.pdf}\right\rangle+Ar\left\langle \embeddpdf{-3}{3}{LR3aab.pdf}\right\rangle+Ar\left\langle\embeddpdf{-3}{3}{LR3aba.pdf}\right\rangle+A^{-1}r^2\left\langle \embeddpdf{-4}{3}{LR3abb.pdf}\right\rangle+A^{-1}r^2\left\langle \embeddpdf{-4}{3}{LR3bba.pdf}\right\rangle\\
=&\left\langle\embeddpdf{-4}{3}{LR3for.pdf} \right\rangle
\end{flalign*}

Thus, 
\begin{flalign*}
P_{\hspace{4pt}\embeddpdf{-2}{1.5}{LR3back.pdf}}(A,r)=P_{\hspace{4pt}\embeddpdf{-2}{1.5}{LR3for.pdf}}(A,r)
\end{flalign*}
Hence, we have the invariance of the Legendrian Jones polynomial under the third Reidemeister move, which completes the proof of Theorem \ref{thm1}.
\end{proof}

Now we will discuss examples of Legendrian knots and oriented Legendrian Hopf link.
\begin{example}\label{example1}
\begin{flalign*}
\left\langle \embeddpdf{-3}{2.5}{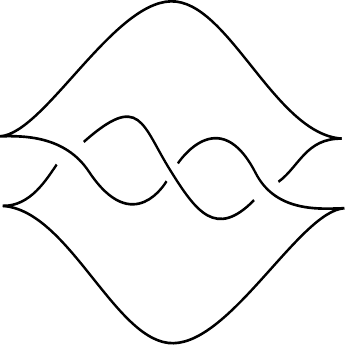}\right\rangle=&A^3\left\langle \embeddpdf{-3}{2.5}{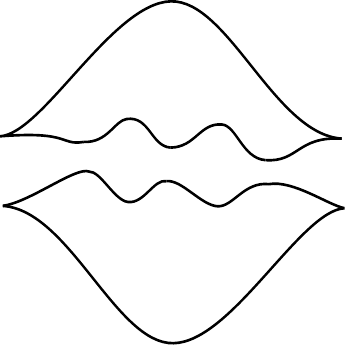}\right\rangle +Ar\left\langle \embeddpdf{-3}{2.5}{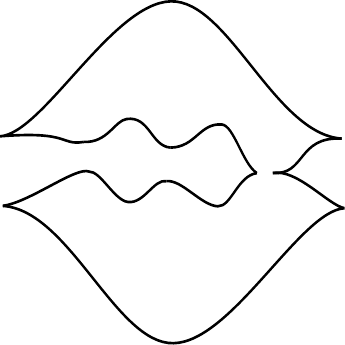}\right\rangle+Ar\left\langle \embeddpdf{-3}{2.5}{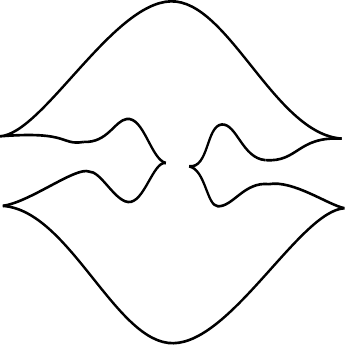}\right\rangle+Ar\left\langle \embeddpdf{-3}{2.5}{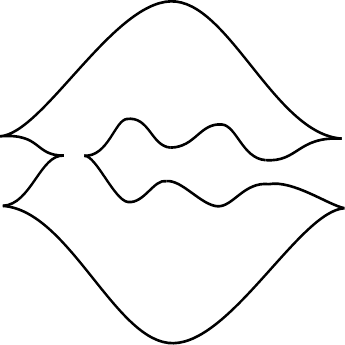}\right\rangle \\
&+A^{-1}r^2\left\langle \embeddpdf{-3}{2.5}{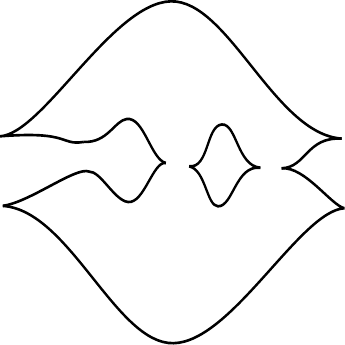}\right\rangle +A^{-1}r^2\left\langle \embeddpdf{-3}{2.5}{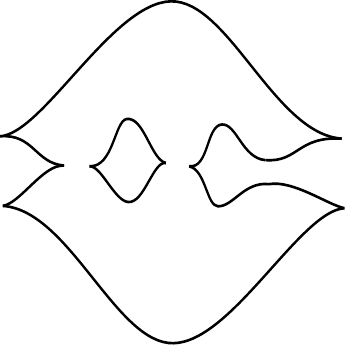}\right\rangle+A^{-1}r^2\left\langle \embeddpdf{-3}{2.5}{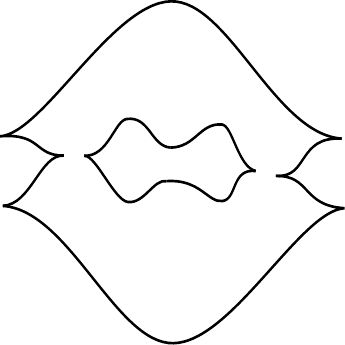}\right\rangle+A^{-3}r^3\left\langle \embeddpdf{-3}{2.5}{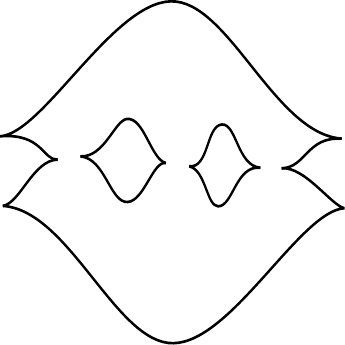}\right\rangle\\
=&A^3(-A^{-2}r-A^2r^{-1})^2+3Ar(-A^{-2}r-A^2r^{-1})+3A^{-1}r^2(-A^{-2}r-A^2r^{-1})^2\\
&+A^{-3}r^3(-A^{-2}r-A^2r^{-1})^3\\
=&(-A^{-2}r-A^2r^{-1})\{-A^5r^{-1}-A^{-3}r^3+A^{-7}r^5\}\\
\end{flalign*}
Then, 
\begin{flalign*}
P_{ \embeddpdf{-3}{2}{RHT.pdf}}(A,r)&=(-A^{-9}r^2)(-A^{-2}r-A^2r^{-1})\{-A^5r^{-1}-A^{-3}r^3+A^{-7}r^5\}\\
&=(-A^{-2}r-A^2r^{-1})\{A^{-4}r+A^{-12}r^5-A^{-16}r^7\}\\
\end{flalign*}
\end{example}

\begin{example}\label{example2}
\begin{flalign*}
\left\langle \embeddpdf{-3}{2.5}{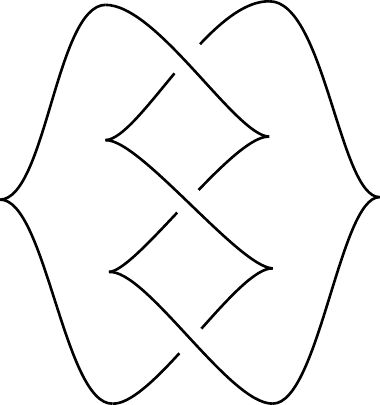}\right\rangle=&A^3\left\langle \embeddpdf{-3}{2.5}{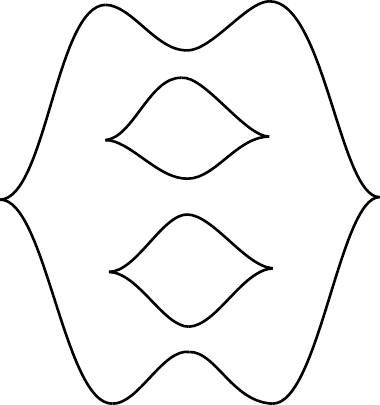}\right\rangle +Ar\left\langle \embeddpdf{-3}{2.5}{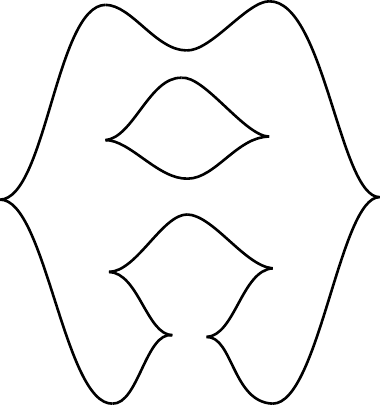}\right\rangle+Ar\left\langle \embeddpdf{-3}{2.5}{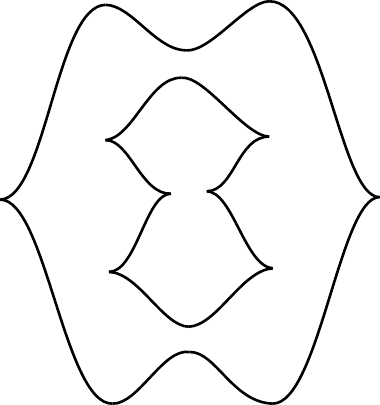}\right\rangle+Ar\left\langle \embeddpdf{-3}{2.5}{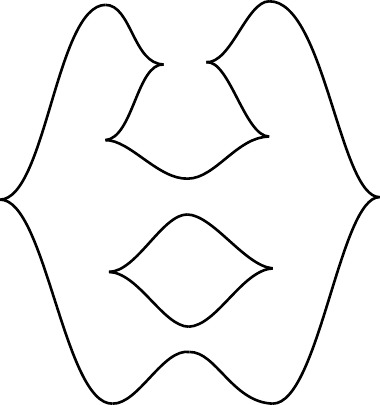}\right\rangle \\
&+A^{-1}r^2\left\langle \embeddpdf{-3}{2.5}{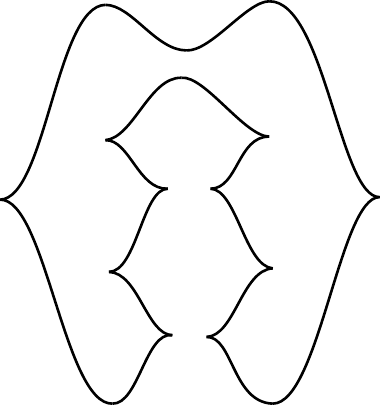}\right\rangle +A^{-1}r^2\left\langle \embeddpdf{-3}{2.5}{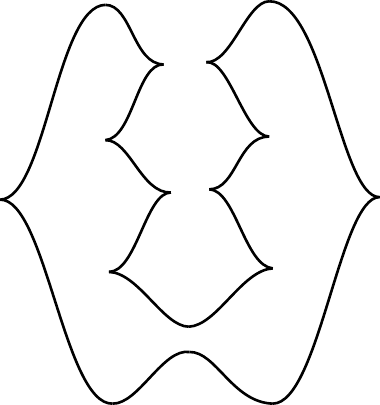}\right\rangle+A^{-1}r^2\left\langle \embeddpdf{-3}{2.5}{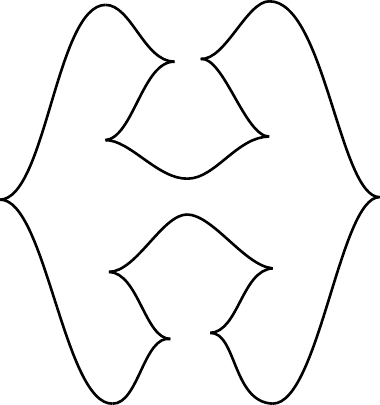}\right\rangle+A^{-3}r^3\left\langle \embeddpdf{-3}{2.5}{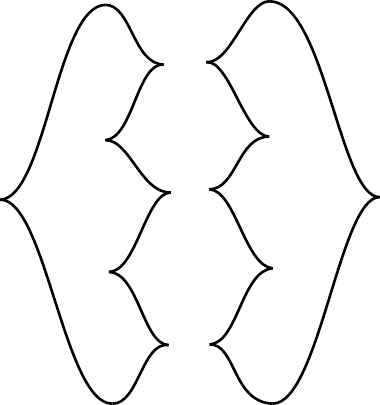}\right\rangle\\
=&A^3(-A^{-2}r-A^2r^{-1})^3+3Ar(-A^{-2}r-A^2r^{-1})^2+3A^{-1}r^2(-A^{-2}r-A^2r^{-1})\\
&+A^{-3}r^3(-A^{-2}r-A^2r^{-1})^2\\
=&(-A^{-2}r-A^2r^{-1})\{A^7r^{-2}-A^3-A^{-5}r^4\}\\
\end{flalign*}
Then, 
\begin{flalign*}
P_{ \embeddpdf{-3}{2}{lht.pdf}}(A,r)&=(-A^{-2}r-A^2r^{-1})\{-A^{16}r^{-2}+A^{12}+A^{4}r^4\}\\
\end{flalign*}
\end{example}

\begin{example}\label{example3}
\begin{flalign*}
\left\langle \embeddpdf{-3}{2.5}{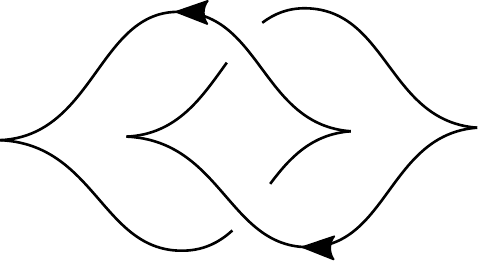}\right\rangle &=A^2\left\langle \embeddpdf{-3}{2.5}{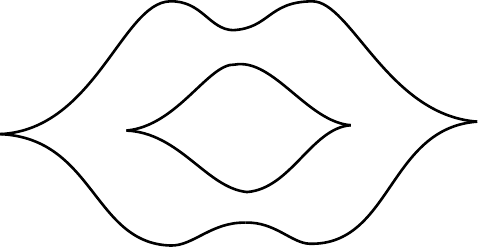}\right\rangle +r\left\langle \embeddpdf{-3}{2.5}{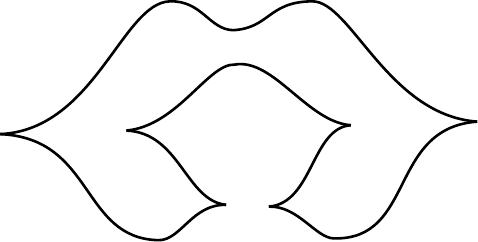}\right\rangle+r\left\langle \embeddpdf{-3}{2.5}{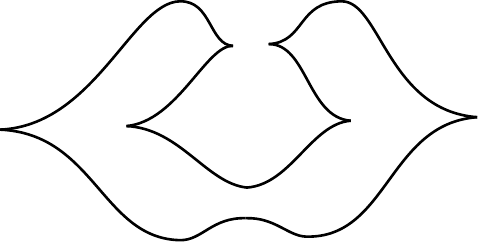}\right\rangle+A^{-2}r^2\left\langle \embeddpdf{-3}{2.5}{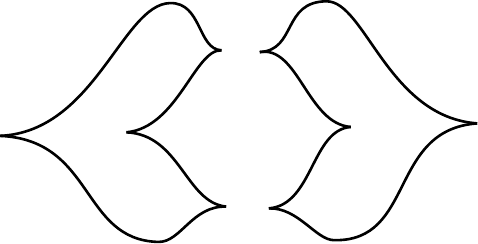}\right\rangle \\
&=A^2(-A^{-2}r-A^2r^{-1})^2+2r(-A^{-2}r-A^2r^{-1})+A^{-2}r^2(-A^{-2}r-A^2r^{-1})^2\\
&=(-A^{-2}r-A^2r^{-1})(-A^{4}r^{-1}-A^{-4}r^3)
\end{flalign*}
Then, 
\begin{flalign*}
P_{ \embeddpdf{-3}{2}{hopf.pdf}}(A,r)&=(-A^{-2}r-A^2r^{-1})(-A^{10}r^{-1}-A^2r^3)\\
\end{flalign*}

\begin{remark}
Notice that we have used a choice of orientation on the Hopf link to get the associated Jones polynomial. The orientation is used to compute the \emph{writhe} of the link which is required in the computation of Jones polynomial of the Hopf link. The same holds when we consider the Legendrian Jones polynomial of Legendrian Hopf link. However, in case of knots, the writhe does not depend on the choice of the orientation.
\end{remark}
\end{example}
 We see that in all the examples the substitution $r=1$ gives the Jones polynomials of respective smooth knots.

\section{Extension of Khovanov Homology for Legendrian knots}
After having extended the notion of Jones polynomial for the Legendrian knots, we now would like to generalize Khovanov homology to the setting of Legendrian knots in such a
way that we get a categorification of the Legendrian Jones polynomial. To achieve this, we first introduce the Legendrian state sum.
\subsection{The Legendrian State Sum}

Recall that, each crossing in the front projection can be resolved to get a new front diagrams with the number of
crossings one less than that in the previous diagram. If $n(K_F)$ denotes the number of crossings in $K_F$, then $K_F$ can be completely resolved in $2^{n(K_F)}$ ways as each crossing can
be resolved in two possible ways described in Figure \ref{fig:resolution1}.

\begin{definition}
A \emph{Legendrian state} of $K_F$ is a front diagram (which is a union of planar cusped loops) that we get after resolving all the crossings in a front projection $K_F$.
\end{definition}

The $A$-resolution carries weight $A$ and the $B$-resolution carries weight $A^{-1}r$. Then, the Legendrian bracket polynomial can be calculated by taking the sum over all the Legendrian states with their weights multiplied. For a Legendrian state $s$ of $K_F$, let $A(s)$ and $B(s)$ denote the number of $A$-resolutions and $B$-resolutions respectively in $s$. Further, $\sigma(s)$ denotes $A(s)-B(s)$ and $||s||$ denotes the number of cusped loops in $s$. Then,

\begin{align}
\nonumber \langle K_F \rangle&=\sum_{s\in \mathcal{LS}( K_F)}A^{\sigma(s)}r^{B(s)}(-A^2r^{-1}-A^{-2}r)^{||s||}. \\
P_{K}(A,r)&=(-A)^{-3\omega(K_F)}r^{\frac{c}{2}-l(K_F)}\sum_{s\in \mathcal{LS}( K_F)}A^{\sigma(s)}r^{B(s)}(-A^2r^{-1}-A^{-2}r)^{||s||}. \label{eq3}
\end{align}
 where $\mathcal{LS}( K_F)$  denotes the set of all Legendrian  states of $K_F.$

\subsection{Construction of the Legendrian Khovanov Chain Complex}
Now our objective is to realize a homology with the graded Euler characteristic equal to \eqref{eq3}. We define enhanced Legendrian states as follows.
\begin{definition}
An \emph{enhanced Legendrian state} of $K_F$ is a Legendrian state of $K_F$ with a $+$ or $-$ label for each cusped loop in it. 
\end{definition}

Note that an enhanced Legendrian state $S$ becomes a Legendrian state if we ignore the lables on it. Hence, we denote the Legendrian state corresponding to $S$ by the small letter `$s$'.

For an enhanced Legendrian state $S$, define $\tau(S)$ to be the number of $+$ labels minus the number of $-$ labels in $S$. Next we define three gradings $i(S),j(S)$ and $k(S)$ of $S$ as follows :

\begin{flalign*}
i(S)&:= \frac{\omega(K_F)-\sigma(S)}{2},\\
      j(S)&:= \frac{3\omega(K_F)-\sigma(S)+2\tau(S)}{2}, \\
 \text{ and   }  k(S)&:= \frac{c(S)-2l(K_F)+2\tau(S)}{2}.
\end{flalign*}

Let $\mathcal{ELS}(K_F)$ denote the set of the all enhanced Legendrian states arising out of the front projection $K_F $.  We rewrite Legendrian Jones polynomial \eqref{eq3}, using the substitution $-A^{-2}=q$ and the gradings $i, j $ and $k $ as follows:

\begin{align}
\nonumber P_{K}(A,r)&=(-1)^{-3\omega(k_F)}(-q^{-1})^{\frac{-3\omega(K_F)}{2}}r^{\frac{c}{2}-l(K_F)}\sum_{s\in \mathcal{LS}( K)}(-q^{-1})^{\frac{\sigma(s)}{2}}r^{B(s)}\left((qr)^{-1}+qr\right)^{||s||}\\
\nonumber &=\sum_{ S \in \mathcal{ELS}(K_F)}(-1)^{\frac{\omega(K_F)-\sigma(s)}{2}}r^{\frac{c}{2}-l(K_F)}r^{B(s)}q^{\frac{3\omega(K_F)-\sigma(s)}{2}}(qr)^{\tau(S)}\\
&=\sum_{S \in \mathcal{ELS}(K_F)}(-1)^{i(S)}q^{j(S)}r^{k(S)}. \label{eq5} 
\end{align}
In the light of the above, we construct a chain complex with coefficients from $\mathbb{Z}_2 $. 
 Let $C(K_F)$ be the abelian group generated by enhanced Legendrian states of $K_F$ over $\mathbb{Z}_2.$ 
For any $j,k,i\in \mathbb{Z},$ let $C_j(K_F)$ be the subgroup of $C(K_F)$, generated by enhanced Legendrian states $S$ with $j(S)=j,C_{k,j}\left(K_F\right)$ be the subgroup of $C_{j}(K_F)$, generated by enhanced Legendrian states $S$ with $k(S)=k$ 
 and  $C_{i,k,j}(K_F)$ be the subgroup of $C_{k,j}(K_F)$ generated by enhanced Legendrian states $S$ with $i(S)=i$. With this notation we have the following:

\begin{align*}
C(K_F)&=\bigoplus_{j\in \mathbb{Z}}C_j(K_F), \\
C_j(K_F)&=\bigoplus_{k\in \mathbb{Z}}C_{k,j}(K_F),\\
C_{k,j}(K_F)&=\bigoplus_{i\in \mathbb{Z}}C_{i,k,j}(K_F).
\end{align*}
    
   Note that $C_{i,k,j}(K_F)$ is non empty for only finitely many $i,k,j\in \mathbb{Z}$ as there are only finitely many crossings in a front projection $K_F.$

\subsection{The boundary map}
 If we ignore the geometric aspect of a Legendrian knot, it reduces to a smooth knot. Therefore, we define the boundary map $\partial: C(K_F)\rightarrow C(K_F)$ such that forgetting the third grading $k$, we get back the boundary map for the Khovanov complex of the underlying smooth knot. In order to define a boundary map, we introduce the notion of incidence of enhanced Legendrian states. To define incidence, we observe that
 groups $C_{i,k,j}(K_F)$ are same as $C_{i,j}(K_F)$ as follows.

 \begin{lemma}
Let $S$ and $T$ be two Legendrian enhanced states of $K_F$. Then $j(S)=j(T)$ if and only if $k(S)=k(T).$
\end{lemma}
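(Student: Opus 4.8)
The plan is to show that the difference $j(S) - k(S)$ depends only on the front diagram $K_F$ and not on the individual enhanced state $S$. Once this is established, the biconditional is immediate: if $j(S) = j(T)$ then $k(S) = k(T)$ and conversely, since the two gradings differ by a fixed constant.

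First I would subtract the two defining formulas. Writing out $j(S) - k(S)$, the terms $2\tau(S)$ cancel, as do the purely diagrammatic quantities $\omega(K_F)$ and $l(K_F)$, which are fixed once $K_F$ is fixed. What remains is $\tfrac{1}{2}\bigl(3\omega(K_F) + 2l(K_F) - \sigma(S) - c(S)\bigr)$, so the whole question reduces to showing that $\sigma(S) + c(S)$ takes the same value for every state $S$.

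The key step is to track how $\sigma(S)$ and $c(S)$ vary with the number of $B$-resolutions used in $S$. Let $n = n(K_F)$ be the number of crossings and $c(K_F)$ the number of cusps of $K_F$. Since each crossing is resolved exactly once, $A(S) + B(S) = n$, and hence $\sigma(S) = A(S) - B(S) = n - 2B(S)$. On the other hand, an $A$-resolution introduces no new cusps while a $B$-resolution introduces exactly two new cusps (this is precisely the feature of the $B$-resolution emphasized in Figure \ref{fig:resolution1}), so $c(S) = c(K_F) + 2B(S)$. Adding these, the $B(S)$-terms cancel and $\sigma(S) + c(S) = n + c(K_F)$, a constant independent of $S$.

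The main—and essentially only—obstacle is the bookkeeping in this last step: one must verify that the $B$-resolution contributes exactly $+2$ to the cusp count and the $A$-resolution contributes $0$, so that the coefficient of $B(S)$ in $c(S)$ cancels the coefficient of $B(S)$ in $\sigma(S)$. Granting this, $j(S) - k(S) = \tfrac{1}{2}\bigl(3\omega(K_F) + 2l(K_F) - n - c(K_F)\bigr)$ is a constant $M$ determined by $K_F$ alone, so $j(S) = k(S) + M$ for every $S$, and the claimed equivalence follows at once.
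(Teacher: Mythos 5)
Your proof is correct and rests on exactly the same two identities the paper's own proof uses---$\sigma(S)=n(K_F)-2B(S)$ and $c(S)=c(K_F)+2B(S)$ (each $B$-resolution adding two cusps)---so it is essentially the paper's argument, merely reorganized as ``$j-k$ is a state-independent constant'' rather than as a chain of biconditionals. As a small bonus, your constant $\tfrac{1}{2}\bigl(3\omega(K_F)+2l(K_F)-n(K_F)-c(K_F)\bigr)$ simplifies, using $\omega(K_F)=n(K_F)-2l(K_F)$, to $\omega(K_F)-\tfrac{c(K_F)}{2}=tb(K)$, which is precisely the grade-shift statement the paper establishes later in Theorem \ref{thm6}.
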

\begin{proof}

Let $j(S)=j(T)$. Then, we have the following:
\begin{flalign*}
&j(S)=j(T)\\
\Longleftrightarrow &\frac{3\omega(K_F)-\sigma(S)+2\tau(S)}{2}=\frac{3\omega(K_F)-\sigma(T)+2\tau(T)}{2}\\
\Longleftrightarrow &-\sigma(S)+2\tau(S)=-\sigma(T)+2\tau(T)\\
\Longleftrightarrow& -\left((n(K_F)-B(S))-B(S)\right)+2\tau(S)= -\left((n(K_F)-B(T)\right)-B(T))+2\tau(T)\\
\Longleftrightarrow &B(S)+\tau(S)=B(T)+\tau(T)\\
\Longleftrightarrow &-l(K_F)+\left(\frac{c(K_F)}{2}+B(S)\right)+\tau(S)=-l(K_F)+\left(\frac{c(K_F)}{2}+B(T)\right)+\tau(T)\\
\Longleftrightarrow &-l(K_F)+\frac{c(S)}{2}+\tau(S)=-l(K_F)+\frac{c(T)}{2}+\tau(T)\\
\Longleftrightarrow &k(S)=k(T). &&\qedhere
\end{flalign*}
\end{proof}

\begin{definition}
A Legendrian state $t$ is \emph{incident} with a Legendrian state $s$ if $s$ and $t$ differ in resolutions at exactly one crossing and at this crossing $s$ has $A$-resolution and $t$ has $B$-resolution. We will call such a crossing the \emph{site of incidence}. We denote it by $x(s:t)$.
\end{definition} 

\begin{definition}
An enhanced Legendrian state $T$ is said to be \emph{incident} with another enhanced Legendrian state $T$ if the following conditions hold:  
    \begin{enumerate}
        \item The Legendrian states $t$ corresponding to $T$ is incident with the Legendrian state $s$ corresponding to $S$.
        \item $j(S)=j(T)$ and $k(S)=k(T)$.
        \item The labels of common cusped loops in $S$ and $T$ are same.
    \end{enumerate}
\end{definition}

    \begin{lemma}
   Let $T$ be an enhanced state incident with the enhanced state $S$, then $\sigma(S)-2=\sigma(T)$ and $ i(S)+1=i(T)$.
   \end{lemma}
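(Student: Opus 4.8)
The plan is to read off the claim directly from the definition of incidence, since both quantities $\sigma$ and $i$ depend only on the underlying (unenhanced) Legendrian state and not on the $\pm$ labels. First I would record that $\sigma(S)=\sigma(s)$ and $i(S)=i(s)$, so that it suffices to track how the resolution data change when we pass from $s$ to the incident state $t$. By the definition of incidence, $s$ and $t$ agree at every crossing except one, the site of incidence $x(s:t)$, where $s$ carries an $A$-resolution and $t$ carries a $B$-resolution. Consequently the resolution counts change by exactly
\begin{align*}
A(t)&=A(s)-1, & B(t)&=B(s)+1.
\end{align*}

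Next I would compute $\sigma$ directly from $\sigma(s)=A(s)-B(s)$:
\begin{align*}
\sigma(T)=\sigma(t)=A(t)-B(t)=\bigl(A(s)-1\bigr)-\bigl(B(s)+1\bigr)=\sigma(s)-2=\sigma(S)-2,
\end{align*}
which is the first assertion. I would then substitute this into the definition $i(S)=\tfrac{\omega(K_F)-\sigma(S)}{2}$, noting that $\omega(K_F)$ is a feature of the fixed front projection and is unaffected by the choice of resolution, to obtain
\begin{align*}
i(T)=\frac{\omega(K_F)-\sigma(T)}{2}=\frac{\omega(K_F)-\sigma(S)+2}{2}=\frac{\omega(K_F)-\sigma(S)}{2}+1=i(S)+1.
\end{align*}

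There is essentially no obstacle here: the statement is a one-step bookkeeping consequence of the incidence definition. The only point worth stating explicitly, to keep the argument clean, is the observation that $\sigma$ and $i$ are functions of the underlying Legendrian state alone, so that the enhancement conditions in the definition of incidence (matching $j$- and $k$-gradings and agreement of labels on common loops) play no role in this particular computation; they will matter only when the boundary map itself is defined. I would therefore keep the proof to the two displayed computations above, prefaced by the single sentence identifying $A(t)$ and $B(t)$.
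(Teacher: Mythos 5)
Your proposal is correct and follows essentially the same route as the paper: both note that passing from $S$ to an incident $T$ converts exactly one $A$-resolution to a $B$-resolution, so $\sigma$ drops by $2$, and then substitute into the definition of $i$. Your explicit remark that $\sigma$ and $i$ depend only on the underlying unenhanced state is left implicit in the paper but is a harmless clarification.
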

   \begin{proof}
   Let $T$ be any enhanced Legendrian state incident with $S$. As $T$ has excatly one more $B$-resolution then $S$, we get $\sigma(S)-2=\sigma(T)$ and 
   \begin{align*}
   i(T)&=\frac{\omega(K_F)-\sigma(T)}{2}\\
   &=\frac{\omega(K_F)-\sigma(S)+2}{2}\\
   &=\frac{\omega(K_F)-\sigma(S)}{2}+1\\
   &=i(S)+1. &&\qedhere
   \end{align*}
   \end{proof}
   
   \begin{lemma}
   Let $T$ be incident with $S$, then $\tau(S)=\tau(T)+1$.
   \end{lemma}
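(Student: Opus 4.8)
The plan is to read off the claimed relation directly from the grading constraint that is built into the definition of incidence. Since $T$ is incident with $S$, condition (2) gives in particular $j(S)=j(T)$. First I would expand this equality using the definition
$$j(S)=\frac{3\omega(K_F)-\sigma(S)+2\tau(S)}{2},$$
and likewise for $T$. Cancelling the common term $3\omega(K_F)$ and clearing the factor $2$ reduces $j(S)=j(T)$ to the identity $-\sigma(S)+2\tau(S)=-\sigma(T)+2\tau(T)$.

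Next I would invoke the preceding lemma, which supplies $\sigma(T)=\sigma(S)-2$. Substituting this into the identity above, the $-\sigma(S)$ terms cancel on the two sides and one is left with $2\tau(S)=2+2\tau(T)$, that is, $\tau(S)=\tau(T)+1$, which is exactly the assertion. The whole argument is a one-line substitution once the two ingredients — the grading equality $j(S)=j(T)$ from the definition of incidence and the change $\sigma(T)=\sigma(S)-2$ from the previous lemma — are in place.

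There is essentially no obstacle here; the content of the lemma is forced by condition (2) of incidence together with the already-established behavior of $\sigma$. I would remark that the same conclusion follows equally from the other constraint $k(S)=k(T)$: using $c(S)=c(K_F)+2B(S)$ (each $B$-resolution contributes two cusps) and the fact that $T$ carries exactly one more $B$-resolution than $S$, one gets $c(T)=c(S)+2$, whereupon $k(S)=k(T)$ collapses to the identical relation $\tau(S)=\tau(T)+1$. Either route is routine, so the only thing I would take care to state cleanly is that conditions (2) and (3) of incidence are what pin down the labels, making the net count $\tau$ drop by exactly one as the resolution passes from $A$ to $B$ at the site of incidence.
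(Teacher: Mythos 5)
Your argument is correct and is essentially identical to the paper's own proof: both combine the grading equality $j(S)=j(T)$ from the definition of incidence with $\sigma(T)=\sigma(S)-2$ from the preceding lemma, and the claimed relation falls out by substitution. Your additional remark that the $k$-grading constraint yields the same conclusion is a harmless (and consistent) bonus, but not needed.
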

   \begin{proof}
   Let $T$ be any enhanced Legendrian state which is incident with $S$. Then ,
    \begin{flalign*}
   &\Longrightarrow i(S)+1=i(T) \text{   and   } j(S)=j(T)\\
  &\Longrightarrow \sigma(S)-2=\sigma(T) \text{ and } \frac{-\sigma(S)}{2}+\tau(S)=\frac{-\sigma(T)}{2}+\tau(T)\\
  &\Longrightarrow\tau(S)=\tau(T)+1  &&\qedhere
   \end{flalign*}
    \end{proof}
     The above lemma shows that Figure \ref{fig:incident} gives the list of all possible incident states $T$ with $S.$
   \begin{figure}[!tbp]
  \centering
  \begin{minipage}[t]{0.3\textwidth} 
    \includegraphics[width=\textwidth]{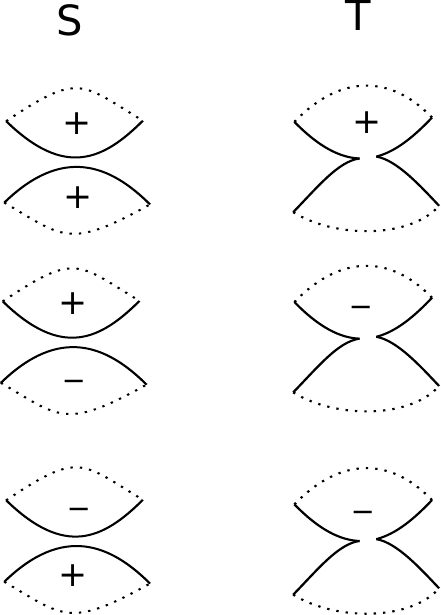}\\ 
  \end{minipage}
  \hfill
  \begin{minipage}[t]{0.4\textwidth}
    \includegraphics[width=\textwidth]{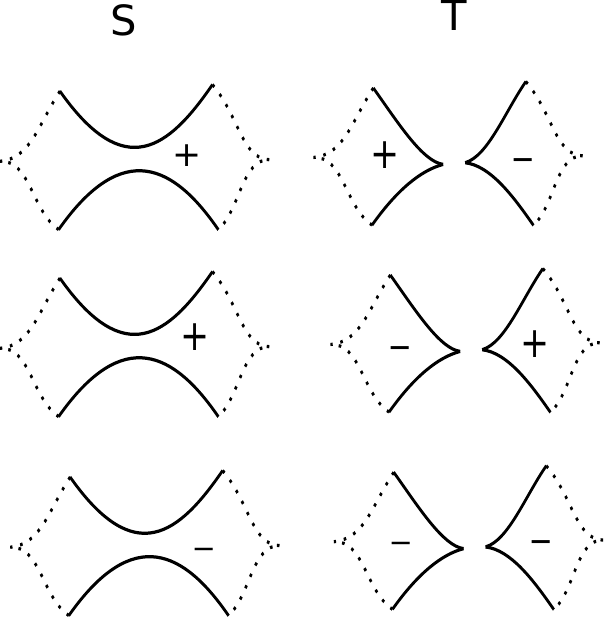}\\
  \end{minipage}
 \caption{List of all the incident states}
 \label{fig:incident}
  \end{figure}
  
    \begin{definition}
    Define, the \emph{incident number} $(S:T)$ of $T$ with $S$ to be $1$ if $T$ is incident with $S$ and $0$ otherwise.
    \end{definition}
    
    Define the boundary map $\partial : C(K_F)\rightarrow C(K_F)$ as follows: $$\partial(S)=\sum_{T \in \mathcal{ELS}(K_F)}(S:T)T.$$ The boundary of $S$ is the sum of all enhanced Legendrian states which are incident with $S$. Each term $T$ in the boundary of  $S$ is obtained by changing an $A$-resolution in $S$ to $B$-resolution without changing the labels of  unaffected loops in $S$ and assigning labels to new loop(s) such that $\tau(T)=\tau(S)-1$.
    
\begin{lemma}\label{b2}
With the above notation, we have $\partial^2=0$.
 \end{lemma}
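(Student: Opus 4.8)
The plan is to mimic the classical proof that the Khovanov differential squares to zero, working over $\mathbb{Z}_2$ so that all signs are irrelevant and it suffices to count incident pairs modulo $2$. First I would expand
$$\partial^2(S)=\sum_{U\in\mathcal{ELS}(K_F)}\Big(\sum_{T\in\mathcal{ELS}(K_F)}(S:T)(T:U)\Big)U,$$
and fix an enhanced state $U$ whose coefficient is nonzero. For a summand $(S:T)(T:U)$ to survive, $T$ must have exactly one more $B$-resolution than $S$ and exactly one fewer than $U$; hence $U$ is obtained from $S$ by switching $A$- to $B$-resolutions at precisely two distinct sites, say $x_1$ and $x_2$, and any contributing intermediate $T$ agrees with $S$ at one of these two sites and with $U$ at the other. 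Thus there are at most two candidate states, $T_1$ (switch $x_1$ first) and $T_2$ (switch $x_2$ first), and the coefficient of $U$ equals $(S:T_1)(T_1:U)+(S:T_2)(T_2:U)$ in $\mathbb{Z}_2$. The goal reduces to showing this sum is always even, i.e. that a valid path $S\to T_1\to U$ exists if and only if a valid path $S\to T_2\to U$ does.

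Next I would unpack the incidence conditions. The first condition (differ at a single site, with $A$ on the source and $B$ on the target) is built into the construction of $T_1,T_2$. The grading condition $j=\mathrm{const}$ and $k=\mathrm{const}$ is in fact a single constraint: by the Lemma proved above, $j(S)=j(T)$ holds if and only if $k(S)=k(T)$, so the extra third grading imposes no new requirement beyond the one already present in ordinary Khovanov homology. The only substantive content is the label condition together with $\tau(T)=\tau(S)-1$: on the cusped loops a crossing switch either \emph{merges} two loops into one or \emph{splits} one loop into two, and the label data must be compatible with the drop of $\tau$ by $1$. Writing $V$ for the two-dimensional $\mathbb{Z}_2$-vector space with basis vectors labelled $+$ and $-$, these label rules are exactly the Frobenius-algebra multiplication $m$ (for a merge: $+\,+\mapsto +$, $+\,-\mapsto -$, $-\,-\mapsto 0$) and comultiplication $\Delta$ (for a split: $+\mapsto (+\,-)+(-\,+)$, $-\mapsto (-\,-)$); one checks directly that each lowers $\tau$ by $1$, matching the requirement.

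With this dictionary in place, proving $\partial^2=0$ becomes the assertion that the two orders of performing the switches at $x_1$ and $x_2$ induce the same labelled output, i.e. the same multiplicity of $U$. I would organise the verification by the loop-connectivity pattern of the two sites. If the arcs at $x_1$ and $x_2$ lie on disjoint collections of loops, the two maps act on different tensor factors of $V^{\otimes \|s\|}$ and commute trivially. If they share loops, the type of the operation at the second site (merge versus split) may differ between the two orderings; here commutativity is precisely associativity of $m$ when both switches are merges, coassociativity of $\Delta$ when both are splits, and the Frobenius relation $\Delta\circ m=(m\otimes\mathrm{id})\circ(\mathrm{id}\otimes\Delta)=(\mathrm{id}\otimes m)\circ(\Delta\otimes\mathrm{id})$ in the mixed case. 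In every case the coefficient of each labelled configuration of $U$ computed along $T_1$ equals the one computed along $T_2$, so $(S:T_1)(T_1:U)=(S:T_2)(T_2:U)$ and their sum vanishes mod $2$.

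The main obstacle I anticipate is the bookkeeping in the mixed merge/split case: one must track how a single loop carrying the label produced by the first operation is redistributed by the second, and confirm that the Frobenius compatibility gives identical label multiplicities in both orders. The crucial structural point that makes this go through is that the additional cusps introduced by the $B$-resolution affect only the grading $k$ — which, by the Lemma, is slaved to $j$ — and never the underlying merge/split combinatorics of the loops. Consequently the commuting-square analysis is formally identical to the classical Khovanov computation over $\mathbb{Z}_2$, and no genuinely new relation beyond the Frobenius axioms is needed.
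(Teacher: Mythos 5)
Your proposal is correct and follows essentially the same route as the paper: both fix the target state $U$, observe that only the two orderings of switching the two distinct sites can contribute, and check that the merge/split label rules (forced by $\tau(T)=\tau(S)-1$, with the $k$-grading slaved to $j$ by the preceding lemma) make the two contributions agree, so each term occurs an even number of times over $\mathbb{Z}_2$. The only differences are presentational — you invoke the Frobenius-algebra axioms where the paper enumerates the five loop-connectivity cases and their labelings explicitly — and your phrase ``at most two candidate states'' should really read ``at most two candidate underlying Legendrian states'', since several enhanced intermediates can share one underlying resolution, a multiplicity your tensor-factor bookkeeping does in fact account for.
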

 
 \begin{proof}
We adapt the proof for the boundary map of the Khovanov chain complex (see Theorem 5.3.A in \cite{Viro}) to our case. Let $S$ be any enhanced Legendrian state of $K_F$. Then $\partial^2(S)=\sum (S:T)(T:U)U$. To prove $\partial^2(S)=0$, we will show that each
non-zero term $(S:T)(T:U)U$ occurs even number of times. Now, for  any term $(S:T)(T:U)U$ in $\partial^2(S)$, $(S:T)(T:U)\neq 0$ if and only if T is incident with $S$ and $U$ is
incident with $T$. Then $x(S:T)\neq x(T:U)$ as $T$ has $B$-resolution at $x(S:T)$ and $A$-resolution at $x(T:U)$. As said before we will be using $s,t$ and $u$ to denote the
Legendrian states corresponding to $S,T$ and $U$ respectively. Consider the unique Legendrian state $t'$ which coincides with $t$ in terms of resolutions except at $x(S:T)$ and $x(T:U)$. Here the Legendrian state $t'$ is incident with $s$ and $u$ is incident
with $t'$. Now $x(S:T)$ and $x(T:U)$ can be part of one, two or three loops in $s$ and $u$, all these cases are described in Figure \ref{fig:case1},\ref{fig:case2},\ref{fig:case3},\ref{fig:case4} and \ref{fig:case5}. In these figures, the middle two Legendrian states are $t$ and $t'$ and we have used $x$ for $x(S:T)$ and $y$ for $x(T:U)$.

\begin{figure}[h]
    \centering
    \includegraphics[scale=0.7]{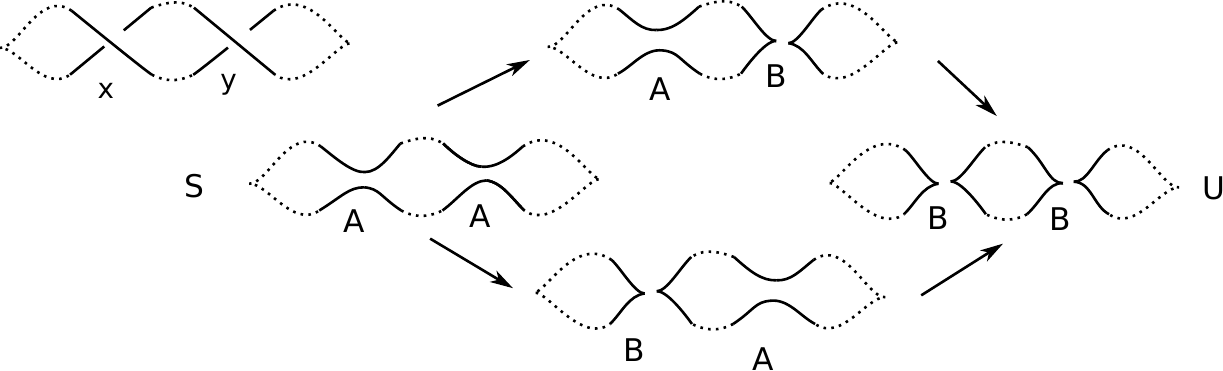}
    \caption{Case 1. $x$ and $y$ are part of one loop in $S$ and three loops in $U$}
    \label{fig:case1}
\end{figure}
\begin{figure}[h]
    \centering
    \includegraphics[scale=0.7]{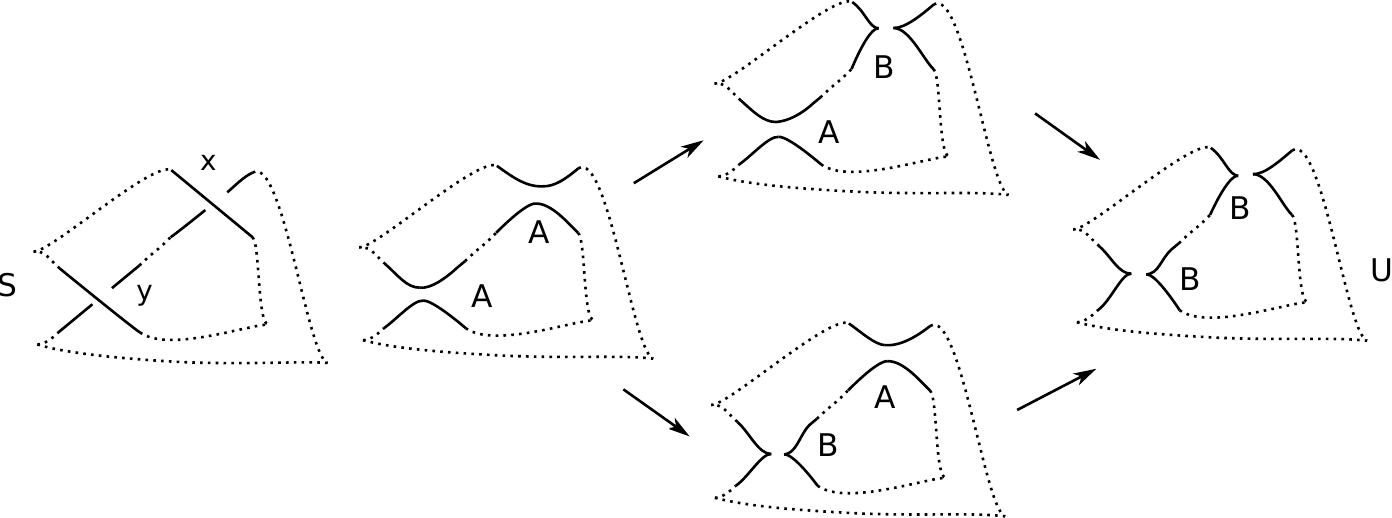}
    \caption{Case 2. $x$ and $y$ are part of one loop in both $S$ and $U$}
    \label{fig:case2}
\end{figure}
\begin{figure}[h]
    \centering
    \includegraphics[scale=0.7]{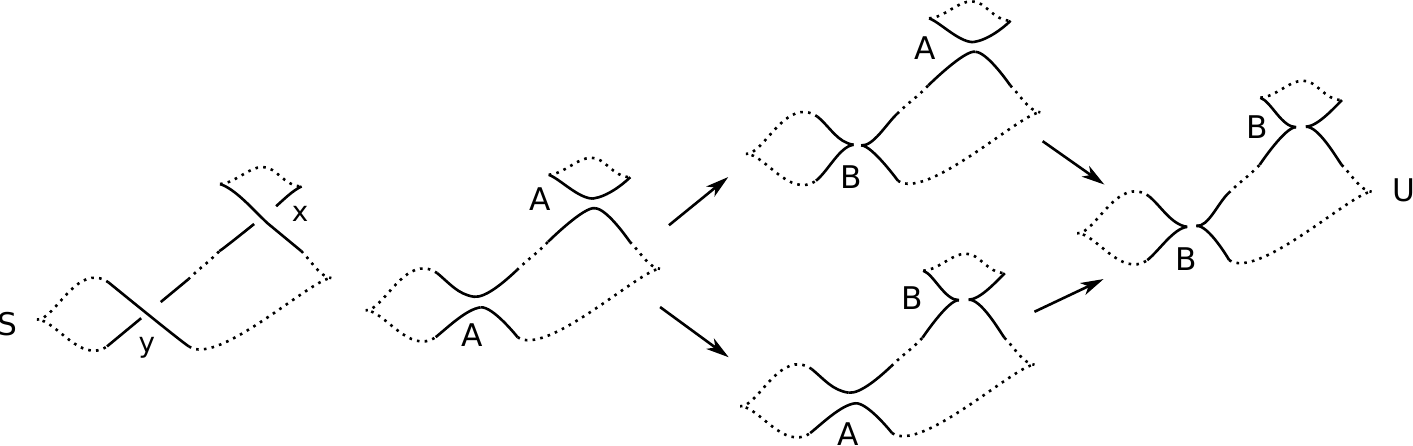}
    \caption{Case 3. $x$ and $y$ are part of two loops in $S$ and two loops in $U$}
    \label{fig:case3}
\end{figure}
\begin{figure}[h]
    \centering
    \includegraphics[scale=0.7]{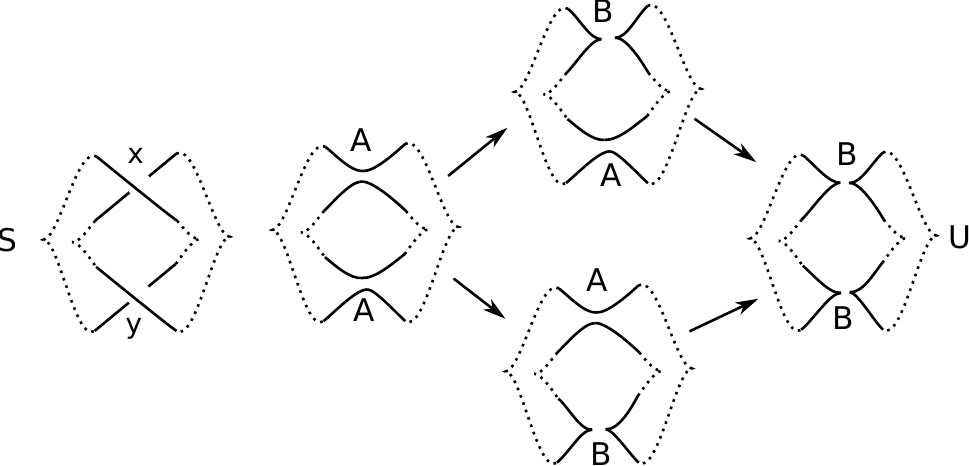}
    \caption{Case 4. $x$ and $y$ are part of two loops in $S$ and two loops in $U$}
    \label{fig:case4}
\end{figure}
\begin{figure}[h]
    \centering
    \includegraphics[scale=0.8]{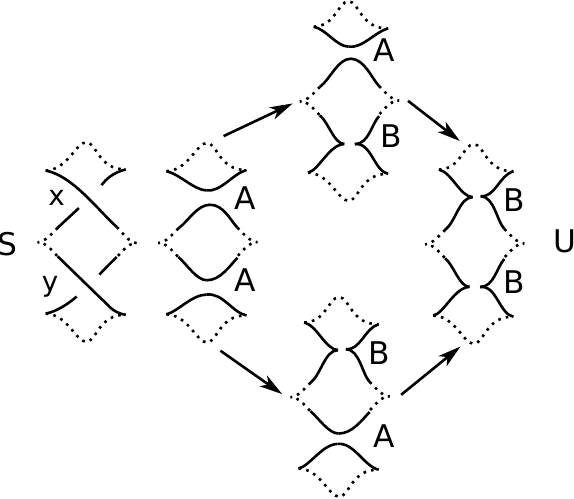}
    \caption{Case 5. $x$ and $y$ are part of three loops in $S$ and one loop in $U$}
    \label{fig:case5}
\end{figure}
 This shows that we have even number of Legendrian states which are incident with $s$ and $u$ is incident with them. Now, we show the
 same holds for enhanced Legendrian states $S$ and $U$. It can be shown by  considering all possible labels on the Legendrian state $s$ one by one.
 Below we have explained Case 1, where $S$ appears with label $+$. We can see that there are even number of enhanced Legendrian states which are incident with $S$, and $U$ is incident with them.
\begin{flalign*}
\embeddpdf{-2}{1.5}{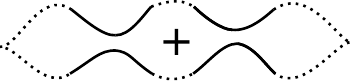}\longrightarrow& \embeddpdf{-2}{1.5}{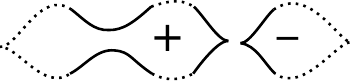}+\embeddpdf{-2}{1.5}{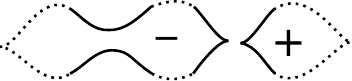}+\\
&\embeddpdf{-2}{1.5}{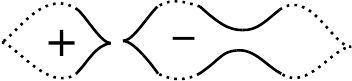}+
\embeddpdf{-2}{1.5}{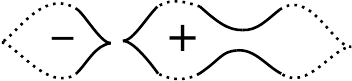}\\
&\longrightarrow 2\embeddpdf{-2}{1.5}{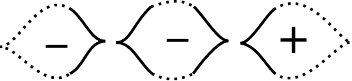}+2\embeddpdf{-2}{1.5}{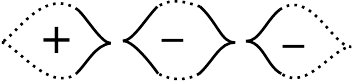}+2\embeddpdf{-2}{1.5}{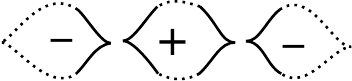}
\end{flalign*}

Calculations for some of the cases are explained below, the rest can be verified analogously.

\begin{flalign*}
\embeddpdf{-2}{1.5}{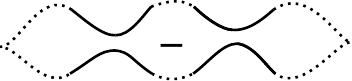}\longrightarrow \embeddpdf{-2}{1.5}{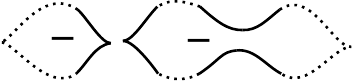}+\embeddpdf{-2}{1.5}{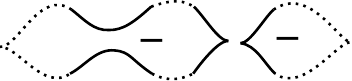}\longrightarrow 2\embeddpdf{-2}{1.5}{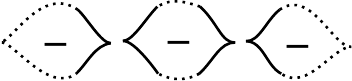}
\end{flalign*}

\begin{flalign*}
\embeddpdf{-3.5}{3}{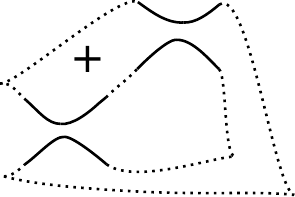}\longrightarrow \embeddpdf{-3.5}{3}{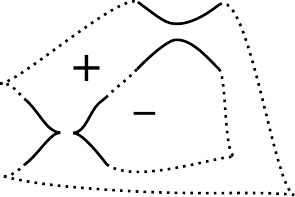}+\embeddpdf{-3.5}{3}{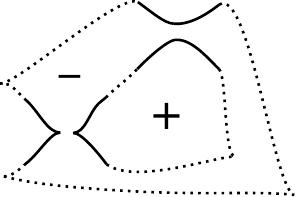}+\embeddpdf{-3.5}{3}{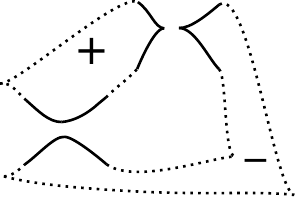}+\embeddpdf{-3.5}{3}{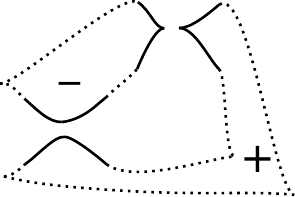}\longrightarrow 4\embeddpdf{-3.5}{3}{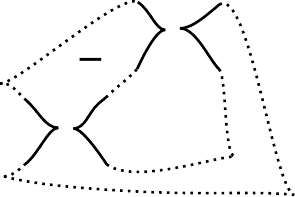}
\end{flalign*}

\begin{flalign*}
\embeddpdf{-3}{3.5}{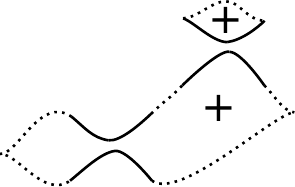}\longrightarrow \embeddpdf{-3}{3.5}{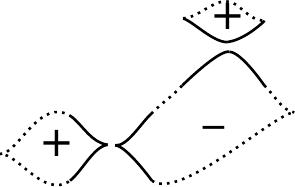}+\embeddpdf{-3}{3.5}{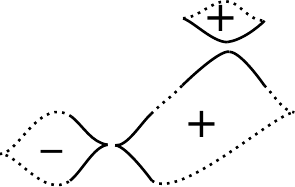}+\embeddpdf{-3}{3.5}{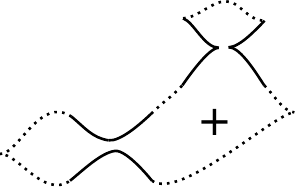}\longrightarrow 2\embeddpdf{-3}{3.5}{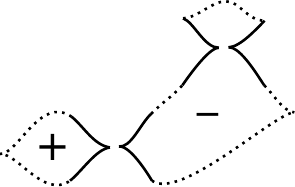}+2\embeddpdf{-3}{3.5}{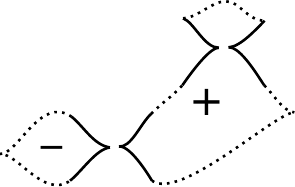}
\end{flalign*}

\begin{flalign*}
\embeddpdf{-3}{3.5}{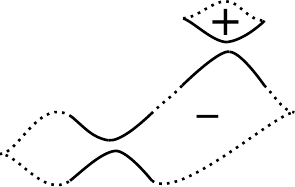}\longrightarrow \embeddpdf{-3}{3.5}{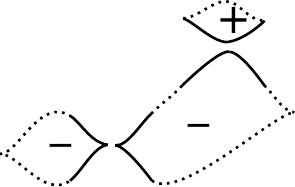}+\embeddpdf{-3}{3.5}{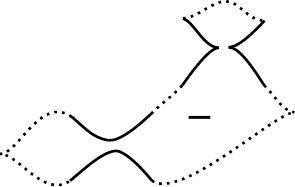}\longrightarrow 2\embeddpdf{-3}{3.5}{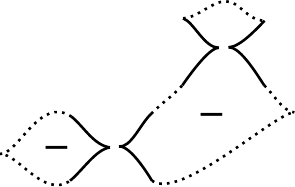}
\end{flalign*}

\begin{flalign*}
\embeddpdf{-5}{3}{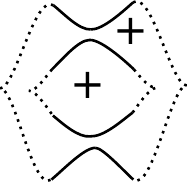}\longrightarrow \embeddpdf{-5}{3}{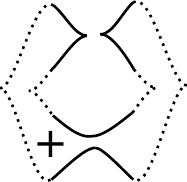}+\embeddpdf{-5}{3}{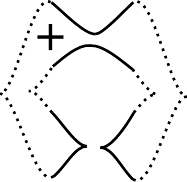}\longrightarrow 2\embeddpdf{-5}{3}{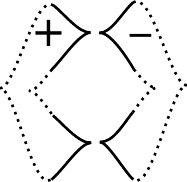}+2\embeddpdf{-5}{3}{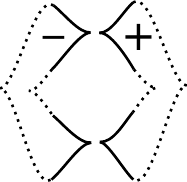}
\end{flalign*}

\begin{flalign*}
\embeddpdf{-5}{3}{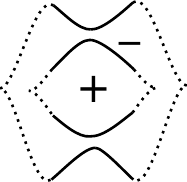}\longrightarrow \embeddpdf{-5}{3}{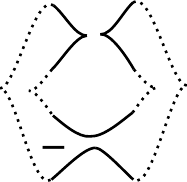}+\embeddpdf{-5}{3}{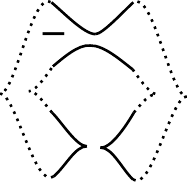}\longrightarrow 2\embeddpdf{-5}{3}{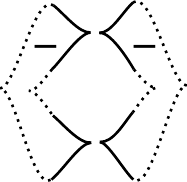}
\end{flalign*}

\begin{flalign*}
\embeddpdf{-8}{4.5}{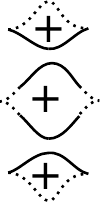}\longrightarrow \embeddpdf{-8}{4.5}{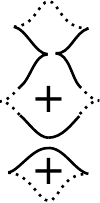}+\embeddpdf{-8}{4.5}{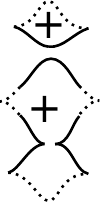}\longrightarrow 2\embeddpdf{-8}{4.5}{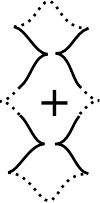}
\end{flalign*}

\begin{flalign*}
\embeddpdf{-8}{4.5}{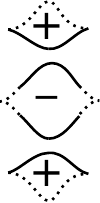}\longrightarrow \embeddpdf{-8}{4.5}{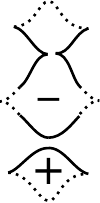}+\embeddpdf{-8}{4.5}{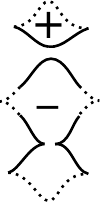}\longrightarrow 2\embeddpdf{-8}{4.5}{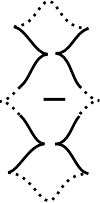}
\end{flalign*}
 \end{proof}

We refer to $(C(K_F), \partial)$, where the $\partial $ is the boundary map shown above, as the \emph{Legendrian Khovanov chain complex} of $K_F$.
\begin{theorem}\label{thm5}
Given a front projection $K_F$ of a Legendrian knot $K$ in $(\mathbb{R}^3,\xi_{st})$, there exist a graded Homology $H_{i,k,j}(K_F)$ associated to it.
\end{theorem}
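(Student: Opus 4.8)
The plan is to observe that the triple grading $(i,k,j)$ together with the already-established identity $\partial^2 = 0$ from Lemma \ref{b2} is all that is needed; the homology is then defined purely formally, so the only genuine content is verifying that $\partial$ is a graded map that does not mix distinct $(j,k)$-values.

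First I would record how $\partial$ interacts with the three gradings. By condition (2) of the definition of incidence, if an enhanced state $T$ occurs in $\partial(S)$, i.e. $(S:T)=1$, then $j(S)=j(T)$ and $k(S)=k(T)$, while the lemma preceding the definition of the incidence number gives $i(T)=i(S)+1$. Since $\partial(S)=\sum_{T}(S:T)\,T$ is a $\mathbb{Z}_2$-linear combination of states incident with $S$, it follows that $\partial$ restricts to a homomorphism
\[
\partial : C_{i,k,j}(K_F) \longrightarrow C_{i+1,k,j}(K_F)
\]
for every triple $(i,k,j)$; that is, $\partial$ preserves the pair $(j,k)$ and raises the index $i$ by one.

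Consequently, for each fixed pair $(j,k)$ and varying $i$ one obtains a complex
\[
\cdots \xrightarrow{\ \partial\ } C_{i-1,k,j}(K_F) \xrightarrow{\ \partial\ } C_{i,k,j}(K_F) \xrightarrow{\ \partial\ } C_{i+1,k,j}(K_F) \xrightarrow{\ \partial\ } \cdots ,
\]
and $C(K_F)$ is the direct sum of these complexes over all $(j,k)$, each finite since only finitely many $C_{i,k,j}(K_F)$ are nonzero. Lemma \ref{b2} then supplies $\partial^2=0$, whence $\operatorname{im}\big(\partial : C_{i-1,k,j} \to C_{i,k,j}\big) \subseteq \ker\big(\partial : C_{i,k,j} \to C_{i+1,k,j}\big)$, and I would define
\[
H_{i,k,j}(K_F) := \frac{\ker\big(\partial : C_{i,k,j}(K_F) \to C_{i+1,k,j}(K_F)\big)}{\operatorname{im}\big(\partial : C_{i-1,k,j}(K_F) \to C_{i,k,j}(K_F)\big)} ,
\]
which is the asserted graded homology.

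There is no serious obstacle at this stage: the substantive work is Lemma \ref{b2}, which has already been carried out, and the grading compatibility is essentially immediate from the incidence conditions. The only point requiring care is to confirm that $\partial$ never alters the values of $j$ and $k$, so that the quotient above is genuinely taken in each fixed $(j,k)$-degree; this is exactly what condition (2) of the definition of incidence guarantees.
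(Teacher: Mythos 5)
Your proposal is correct and follows the same route as the paper: observe that the incidence conditions force $\partial$ to preserve $(j,k)$ and raise $i$ by one, so that $\partial$ restricts to maps $C_{i,k,j}(K_F)\rightarrow C_{i+1,k,j}(K_F)$, and then invoke $\partial^2=0$ from Lemma \ref{b2} to define $H_{i,k,j}(K_F)$ as the usual kernel-modulo-image quotient. The only difference is that you spell out the grading compatibility that the paper dismisses with ``it is easy to see,'' which is a welcome addition rather than a divergence.
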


\begin{proof}
It is easy to see that $\partial$ takes $C_{i,k,j}(K_F)$ to $C_{i+1,k,j}(K_F)$. Then we can define, $\partial_{i,k,j}:=\partial:C_{i,k,j}(K_F)\rightarrow C_{i+1,k,j}(K_F)$ and $\partial_{i,k,j}\circ\partial_{i-1,k,j}=0$. Thus we have $H_{i,k,j}(K_F):=\frac{ker(\partial_{i,k,j)}}{Im(\partial_{i-1,k,j})}$ a graded homology associated to $K_F$. 
\end{proof}

From now onwards, we will be referring to the homology of Legendrian Khovanov chain complex $C_{i,k,j}(K_F)$ as the \emph{Legendrian Khovanov homology} associated to the front projection $K_F $.

\section{Invariance of the Legendrian Khovanov homology under LR moves}
After proving the existence of a graded homology $\{H_{i,k,j}(K_F)\}$, our next goal is to prove that this homology is independent of the front project $K_F$. To this end, we show that application of any Legendrian Reidemeister move on $K_F$ does not change the Legendrian Khovanov homology. 
\begin{proof}[Proof of Theorem \ref{thm2}]
 First, we consider the LR1 move.
The Legendrian Khovanov complex $C \left(\embeddpdf{-2}{2}{Lr1for.pdf}\right)$ splits as follows
$$C \left(\embeddpdf{-2}{2}{Lr1for.pdf}\right)=C\left(\embeddpdf{-2}{2}{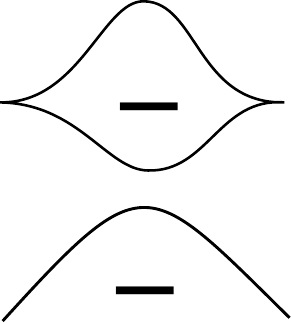},\embeddpdf{-2}{2}{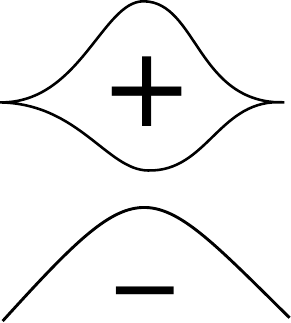}-\embeddpdf{-2}{2}{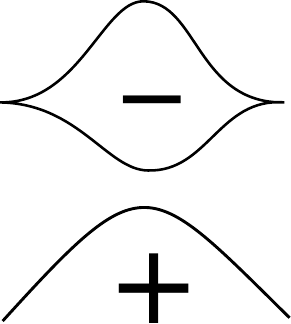}\right)\bigoplus C\left(\embeddpdf{-2}{2}{Lr1-+a.pdf},\embeddpdf{-2}{2}{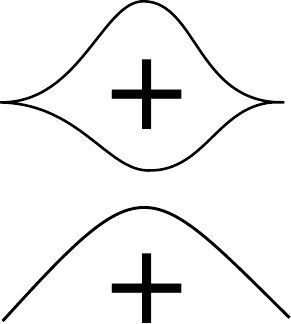},\embeddpdf{-2}{2}{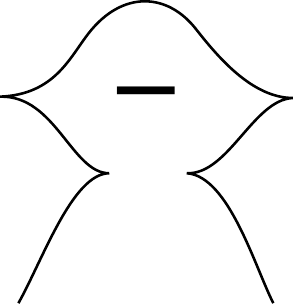},\embeddpdf{-2}{2}{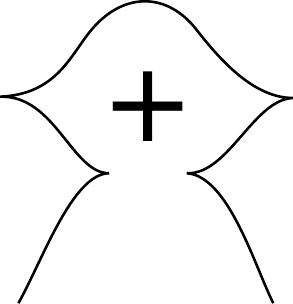}\right)$$ 
The Legendrian Khovanov complex $C \left(\embeddpdf{-2}{2}{Lr1for.pdf}\right)$ deformation retracts onto $C\left(\embeddpdf{-2}{2}{Lr1--a.pdf},\embeddpdf{-2}{2}{Lr1+-a.pdf}-\embeddpdf{-2}{2}{Lr1-+a.pdf}\right)$, the retraction map $\rho_1 :C \left(\embeddpdf{-2}{2}{Lr1for.pdf}\right) \rightarrow C\left(\embeddpdf{-2}{2}{Lr1--a.pdf},\embeddpdf{-2}{2}{Lr1+-a.pdf}-\embeddpdf{-2}{2}{Lr1-+a.pdf}\right)$ is given by :

\begin{flalign*}
&\rho_1\left(\embeddpdf{-2}{2}{Lr1--a.pdf}\right)=\embeddpdf{-2}{2}{Lr1--a.pdf}, \quad \rho_1\left(\embeddpdf{-2}{2}{Lr1-+a.pdf}\right)=\embeddpdf{-2}{2}{Lr1-+a.pdf}-\embeddpdf{-2}{2}{Lr1+-a.pdf},\\
&\rho_1\left(\embeddpdf{-2}{2}{Lr1++a.pdf}\right),\; \rho_1\left(\embeddpdf{-2}{2}{Lr1+-a.pdf}\right), \;
\rho_1\left(\embeddpdf{-2}{2}{Lr1-b.pdf}\right), \;
\rho_1\left(\embeddpdf{-2}{2}{Lr1+b.pdf}\right)=0.
\end{flalign*}

The map $i\circ \rho_1 :C \left(\embeddpdf{-2}{2}{Lr1for.pdf}\right) \rightarrow C \left(\embeddpdf{-2}{2}{Lr1for.pdf}\right) $ is homotopic to the map $id:C \left(\embeddpdf{-2}{2}{Lr1for.pdf}\right)\rightarrow C \left(\embeddpdf{-2}{2}{Lr1for.pdf}\right)$. This homotopy map $h_1:C \left(\embeddpdf{-2}{2}{Lr1for.pdf}\right)\rightarrow C \left(\embeddpdf{-2}{2}{Lr1for.pdf}\right)$, is defined on generators as: 

\begin{flalign*}
&h_1\left(\embeddpdf{-2}{2}{Lr1-b.pdf}\right)=\embeddpdf{-2}{2}{Lr1+-a.pdf}, \quad
h_1\left(\embeddpdf{-2}{2}{Lr1+b.pdf}\right)=-\embeddpdf{-2}{2}{Lr1++a.pdf},\\
&h_1\left(\embeddpdf{-2}{2}{Lr1++a.pdf}\right), \;
h_1\left(\embeddpdf{-2}{2}{Lr1+-a.pdf}\right), \;
h_1\left(\embeddpdf{-2}{2}{Lr1-+a.pdf}\right), \;
h_1\left(\embeddpdf{-2}{2}{Lr1--a.pdf}\right)=0.
\end{flalign*}  

It can be checked that on each generator $\partial h_1+h_1\partial=id-i\circ \rho_1$, that is, $h_1$ is a 
homotopy bewteen $i\circ \rho_1$  and $id$. The map $\Phi_{1} : C\left(\embeddpdf{-2}{2}{Lr1--a.pdf},\embeddpdf{-2}{2}{Lr1+-a.pdf}-\embeddpdf{-2}{2}{Lr1-+a.pdf}\right) \rightarrow C \left(\embeddpdf{-2}{2}{Lr11back.pdf}\right) $, defined on generators as
\begin{align*}
\Phi_1\left(\embeddpdf{-2}{2}{Lr1--a.pdf}\right)&= \embeddpdf{-1}{1}{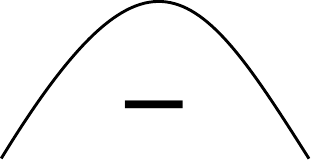},\qquad
\Phi_1\left(\embeddpdf{-2}{2}{Lr1+-a.pdf}-\embeddpdf{-2}{2}{Lr1-+a.pdf}\right)= \embeddpdf{-1}{1}{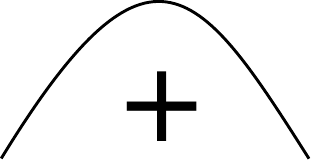},
\end{align*}  
 is an isomorphism and it satisfies $\Phi_1 \circ\partial=\partial \circ \Phi_1$. Therefore, the Legendrian Khovanov homology is invariant under LR1.

Legendrian Khovanov complex  $C\left(\embeddpdf{-2}{2}{Lr2for1.pdf}\right)$ splits into direct sum of its subcomplexes $C_r\left(\embeddpdf{-2}{2}{Lr2for1.pdf}\right)$ and $C_{cr}\left(\embeddpdf{-2}{2}{Lr2for1.pdf}\right)$, where $C_{cr}\left(\embeddpdf{-2}{2}{Lr2for1.pdf}\right)$ is generated by enahanced Legendrian states shown as below:
\begin{flalign*}
 &\embeddpdf{-4}{3}{Lr2aa.pdf}, \quad \embeddpdf{-4}{3}{Lr2bb.pdf}, \quad
 \embeddpdf{-4}{3}{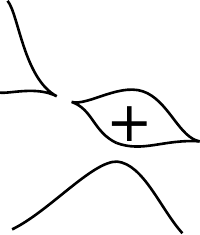}, \quad \embeddpdf{-4}{3}{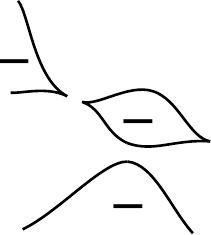}, \quad
 \embeddpdf{-4}{3}{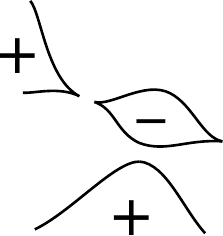}+\embeddpdf{-4}{3}{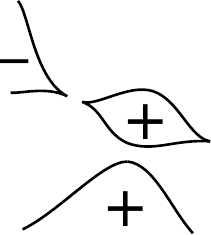}+
 \embeddpdf{-4}{3}{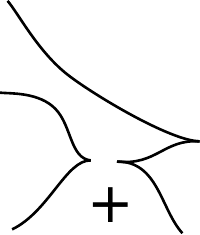}+\embeddpdf{-4}{3}{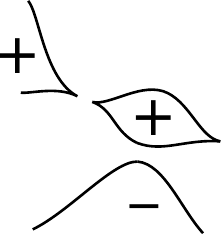},\quad
 \embeddpdf{-4}{3}{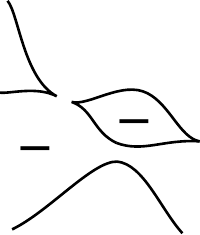}+\embeddpdf{-4}{3}{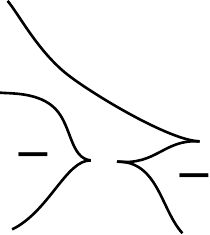},\\
&\embeddpdf{-4}{3}{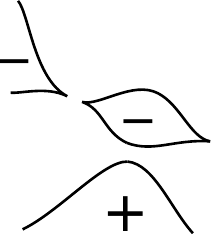}+\embeddpdf{-4}{3}{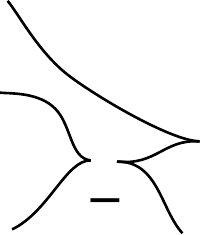}+
\embeddpdf{-4}{3}{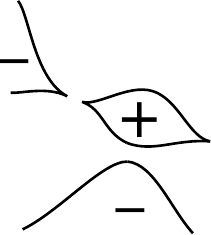},\quad \embeddpdf{-4}{3}{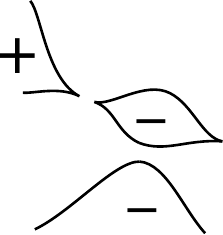}+
\embeddpdf{-4}{3}{Lr2ab-.pdf}+\embeddpdf{-4}{3}{Lr2ba-+-.pdf}, \quad
\embeddpdf{-4}{3}{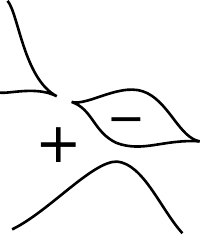}+\embeddpdf{-4}{3}{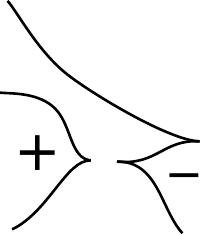}+
\embeddpdf{-4}{3}{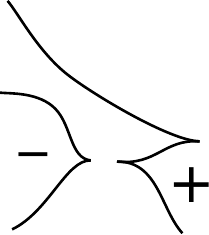}.
\end{flalign*}

 The complex $C_r\left(\embeddpdf{-2}{2}{Lr2for1.pdf}\right) $ is generated by all enhanced Legendrian states shown as follows:
 
 \begin{flalign*}
&\embeddpdf{-4}{3}{Lr2ab--.pdf},\quad \embeddpdf{-4}{3}{Lr2ab+-.pdf}+\embeddpdf{-4}{3}{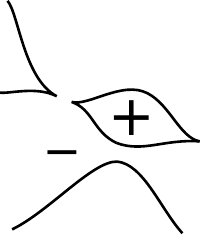},\quad\embeddpdf{-4}{3}{Lr2ab-+.pdf}+\embeddpdf{-4}{3}{Lr2ba+-.pdf}, \quad \embeddpdf{-4}{3}{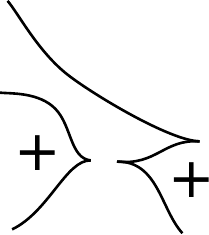}+\embeddpdf{-4}{3}{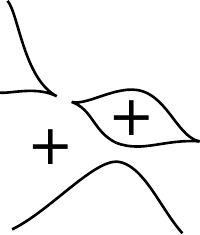},\\ &\embeddpdf{-4}{3}{Lr2ab-.pdf}+\embeddpdf{-4}{3}{Lr2ba-+-.pdf},\quad\embeddpdf{-4}{3}{Lr2ab+.pdf}+\embeddpdf{-4}{3}{Lr2ba++-.pdf}+\embeddpdf{-4}{3}{Lr2ba-++.pdf}.
\end{flalign*}

A term $S$ in any generator from the above shown list involves two strands. These two stands can be part of one loop or two loops in $S$. To make this distinction we give two separate labels for these strands if they are part of two different loops in $S$ or only one label otherwise. For example, the strands belong to two loops in $\embeddpdf{-2}{2}{Lr2ab-+.pdf}$ with labels $-$ and $+$, and part of one loop in $\embeddpdf{-2}{2}{Lr2ab+.pdf}$ with label $+$.

The Legendrian Khovanov complex $C\left(\embeddpdf{-2}{2}{Lr2for1.pdf}\right)$ deformation retracts onto Legendrian Khovanov  subcomplex $C_r\left(\embeddpdf{-2}{2}{Lr2for1.pdf}\right)$. The retraction map  $\rho_2:C\left(\embeddpdf{-2}{2}{Lr2for1.pdf}\right)\rightarrow C_r\left(\embeddpdf{-2}{2}{Lr2for1.pdf}\right)$ and chain-homotopy map $h_2:C\left(\embeddpdf{-2}{2}{Lr2for1.pdf}\right)\rightarrow C\left(\embeddpdf{-2}{2}{Lr2for1.pdf}\right)$ between $i\circ \rho_2$ and $id :C\left(\embeddpdf{-2}{2}{Lr2for1.pdf}\right)\rightarrow C\left(\embeddpdf{-2}{2}{Lr2for1.pdf}\right)$ on each generator are given in Table \ref{tab:tab1} in Appendix.

Each generator of $C_r\left(\embeddpdf{-2}{2}{Lr2for1.pdf}\right)$ involves a term which is locally of the form $\embeddpdf{-2}{2}{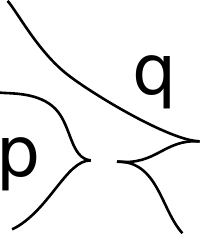}$, with labels $p,q\in \{+,-\}$. Then, each generator $\gamma$ can be written as $\embeddpdf{-2}{2}{Lr2forpq.pdf}+Q$, where $Q$ is the sum of the rest of the terms in $\gamma$. The isomorphism $\Phi_2: C_r\left(\embeddpdf{-2}{2}{Lr2for1.pdf}\right)\rightarrow C\left(\embeddpdf{-2}{2}{Lr2back1.pdf}\right)$ is given by 

$$\Phi_2(\gamma)=\Phi_2\left(\embeddpdf{-2}{2}{Lr2forpq.pdf}+Q\right)=\embeddpdf{-2}{2}{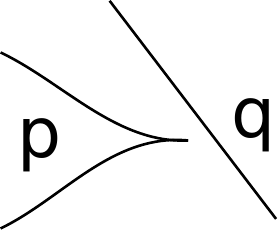}.$$ 
We notice that $\Phi_2\circ\partial=\partial\circ\Phi_2$ holds. Hence, the Legendrian Khovanov homology is invariant under LR2.

Finally, for the invariance of Legedrian Khovanov homology under LR3, we show that the Legendrian Khovanov complex for  $\embeddpdf{-2}{2}{LR3for.pdf}$ deformation retracts onto the Legendrian Khovanov subcomplex, which we denote by
$C_r\left(\embeddpdf{-2}{2}{LR3for.pdf}\right),$ with the retraction map given by $\rho_3 :C\left(\embeddpdf{-2}{2}{LR3for.pdf}\right)\rightarrow C_r\left(\embeddpdf{-2}{2}{LR3for.pdf}\right)$. 
The details are given in Table \ref{tab:tab2},\ref{tab:tab3} and \ref{tab:tab4} in Appendix. As, $\rho_3^2=\rho_3$, that is, $\rho_3 $ is a projection map. Therefore, we naturally get a decomposition $$C\left(\embeddpdf{-2}{2}{LR3for.pdf}\right)=C_r\left(\embeddpdf{-2}{2}{LR3for.pdf}\right)\bigoplus C_{cr}\left(\embeddpdf{-2}{2}{LR3for.pdf}\right)$$  where $C_{cr}\left(\embeddpdf{-2}{2}{LR3for.pdf}\right)$ is the kernel of $\rho_3$ and the generators of $C_r\left(\embeddpdf{-2}{2}{LR3for.pdf}\right)$ are shown as follows:

\begin{flalign*}
\embeddpdf{-6}{4}{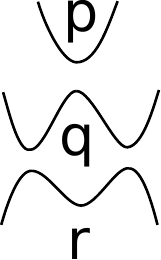},\quad \embeddpdf{-6}{4}{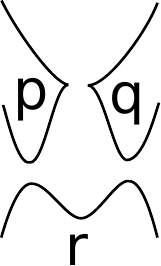},\quad \embeddpdf{-6}{4}{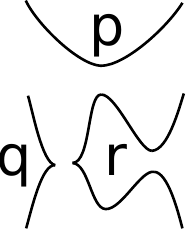}+\embeddpdf{-6}{4}{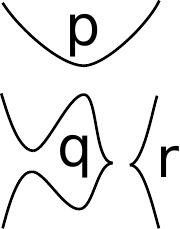},\quad \rho_3\left(\embeddpdf{-6}{4}{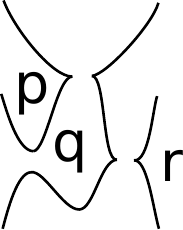}\right), \quad \rho_3\left(\embeddpdf{-6}{4}{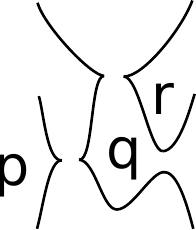}\right)
\end{flalign*}
 The homotopy connecting $i\circ \rho_3$ and $id: C\left(\embeddpdf{-2}{2}{LR3for.pdf}\right) \rightarrow C\left(\embeddpdf{-2}{2}{LR3for.pdf}\right),$ is given by the map $h_3: C\left(\embeddpdf{-2}{2}{LR3for.pdf}\right) \rightarrow C\left(\embeddpdf{-2}{2}{LR3for.pdf}\right)$. The map $h_3$ on each generator is described in Table \ref{tab:tab2}, \ref{tab:tab3} and \ref{tab:tab4} given in Appendix. \\

Simlilarly, we can split $C\left(\embeddpdf{-2}{2}{LR3back.pdf}\right)$ into the direct sum of the Legendrian Khovanov subcomplexes $C_r\left(\embeddpdf{-2}{2}{LR3back.pdf}\right)$ and $C_{cr}\left(\embeddpdf{-2}{2}{LR3back.pdf}\right)$ using the retraction map $\rho_3': C\left(\embeddpdf{-2}{2}{LR3back.pdf}\right)\rightarrow C_r\left(\embeddpdf{-2}{2}{LR3back.pdf}\right)$. 
Again, the retraction map $\rho_3'$ and the homotopy $h_3'$ connecting $id :C\left(\embeddpdf{-2}{2}{LR3back.pdf}\right)\rightarrow C\left(\embeddpdf{-2}{2}{LR3back.pdf}\right)$ to $i\circ \rho_3'$ is given on each 
generator of $C\left(\embeddpdf{-2}{2}{LR3back.pdf}\right)$ in Table \ref{tab:tab5}, \ref{tab:tab6} and \ref{tab:tab7} in Appendix. The generators of  $C_r\left(\embeddpdf{-2}{2}{LR3back.pdf}\right)$ are shown as follows:
\begin{flalign*}
\embeddpdf{-6}{4}{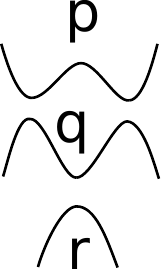},\quad\embeddpdf{-5}{4}{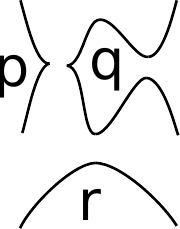}+\embeddpdf{-6}{4}{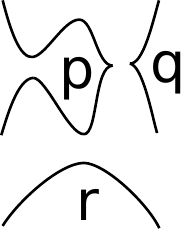},\quad \embeddpdf{-6}{4}{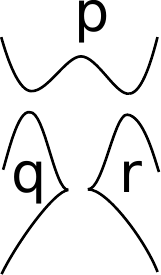}, \quad\rho_3'\left(\embeddpdf{-6}{4}{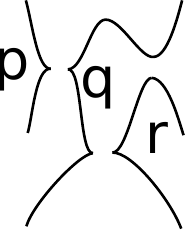}\right),\quad\rho_3'\left(\embeddpdf{-5}{4}{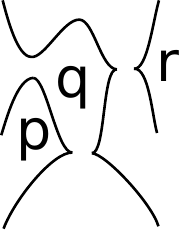}\right).
\end{flalign*}

The isomorphism $\Phi_3:C_r\left(\embeddpdf{-3}{2}{LR3back.pdf}\right)\rightarrow C_r\left(\embeddpdf{-3}{2}{LR3for.pdf}\right)$, is given on generators by
\begin{flalign*}
&\Phi_3\left(\embeddpdf{-6}{4}{Lr3aaapqr.pdf}\right)=\embeddpdf{-6}{4}{Lr3AAApqr.pdf},\quad \Phi_3\left(\embeddpdf{-5}{4}{Lr3baapqr.pdf}+\embeddpdf{-6}{4}{Lr3aabpqr.pdf}\right)=\embeddpdf{-6}{4}{Lr3ABApqr.pdf}, \quad\Phi_3\left(\embeddpdf{-6}{4}{Lr3abapqr.pdf}\right)=\embeddpdf{-6}{4}{Lr3BAApqr.pdf}+\embeddpdf{-6}{4}{Lr3AABpqr.pdf} \\
&\Phi_3\left(\rho_3'\left(\embeddpdf{-6}{4}{Lr3bbapqr.pdf}\right)\right)=\rho_3\left(\embeddpdf{-6}{4}{Lr3ABBpqr.pdf}\right),\quad\Phi_3\left(\rho_3'\left(\embeddpdf{-5}{4}{Lr3abbpqr.pdf}\right)\right)=\rho_3\left(\embeddpdf{-6}{4}{Lr3BBApqr.pdf}\right).
\end{flalign*}

It is easy to see that $\Phi_3 $ commutes with the boundary maps. 
Hence, Legendrian Khovanov homology is invariant under LR3. 
\end{proof}

We conclude by noting that the Jones polynomial for Legendrian knot admits categorification using the homology introduced in this section.

\begin{proof}[Proof of Theorem \ref{thm4}]
The graded Euler characteristic of $H_{i,k,j}(K)$ is given by the following:
\begin{align*}
\sum_{i,k,j}(-1)^ir^kq^jdim(C_{i,k,j}(K_F))&=\sum_{S \in \mathcal{ELS}(K_F)}(-1)^{i(S)}q^{j(S)}r^{k(S)} \\
&=P_K(A,r). \hspace{20pt} \left(\text{ using } \eqref{eq5} \right). &&\qedhere
\end{align*}
\end{proof}
 
\section{The Legendrian Khovanov homology with integer coefficients}
 
To define the Legendrian Khovanov homology with integer coefficients, we refine the Legendrian Khovanov complex by giving an orientation on the enhanced Legendrian state in a manner such that the number of generators of the Legendrian
Khovanov complex does not change. Define an \emph{oriented} enhanced Legendrian state as a pair $(S,l=x_1<x_2<\cdots <x_n)$, where $S$ is an enhanced Legendrian state and $l$ is an ordered sequence of crossings  which went under $B$-resolutions to obtain $S$ from $K_F$. We say that $(S,l)=(S,l')$, i.e. they have same orientation, if $l$ and $l'$ differ by an even permutation. Now
we  define the Legendrian Khovanov complex with integer coefficients $C_{\mathbb{Z}}(K_F)$ as the abelian group generated by all oriented enhanced Legendrian states $(S,l)$ over $\mathbb{Z}$. Also, we assume
$-(S,l)=(S,p(l))$, where $p$ is an odd permutation. Then the number of generators for $C(K_F)$ and $C_{\mathbb{Z}}(K_F)$ are equal. 

\begin{definition}
An oriented enhanced Legendrian state $(T,l')$ is  said to be \emph{incident} with $(S,l)$, if $T$ is incident with $S$ and $(T,l')=(T,l<x(S:T))$. The notation $l<x(S:T) $ denotes the ordering of crossings in $l$ followed by $x(S:T) $.
\end{definition}

\begin{definition}
The incidence number $((S,l):(T,l'))$ is $1$ if $(T,l)$ is incident with $(S,l')$ and $0$ otherwise.
\end{definition}

Now we are ready to define the boundary map $\partial_{\mathbb{Z}}:C_\mathbb{Z} (K_F)\rightarrow C_{\mathbb{Z}}(K_F)$. The boundary map $\partial_{\mathbb{Z}}$ simply 
sends an oriented Legendrian enhanced state to the sum of all the oriented enhanced Legendrian states which are incident with it. Let $\mathcal{OELS}(K_F) $ denote the set of all oriented enhanced Legendrian states obtained from $K_F $. Define the boundary map as follows:
\begin{align*}
\partial_{\mathbb{Z}}\left((S:l)\right) :=\sum_{(T,l') \in \mathcal{OELS}(K_F) }\left((S,l):(T,l')\right)(T,l').    
\end{align*}

\begin{lemma}
$\partial_{\mathbb{Z}}^2=0$.
\end{lemma}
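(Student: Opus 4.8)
The plan is to reduce the integral statement $\partial_{\mathbb{Z}}^2=0$ to the already-established mod-$2$ statement $\partial^2=0$ from Lemma \ref{b2}, upgraded with signs coming from the orientation data $l$. First I would observe that, exactly as in the proof of Lemma \ref{b2}, a nonzero contribution $((S,l):(T,l'))((T,l'):(U,l''))(U,l'')$ to $\partial_{\mathbb{Z}}^2((S,l))$ arises precisely when the Legendrian state $t$ is incident with $s$ at some site $x$ and $u$ is incident with $t$ at a distinct site $y$, with $x\neq y$ for the reason already noted (namely $T$ carries a $B$-resolution at $x=x(S:T)$ and an $A$-resolution at $y=x(T:U)$). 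The combinatorial analysis carried out in Lemma \ref{b2}, organized by Figures \ref{fig:case1}--\ref{fig:case5}, shows that for each such $U$ there is exactly one other intermediate state $t'$, the unique Legendrian state agreeing with $t$ except at the two sites $x$ and $y$, so that $U$ is reached from $S$ through precisely two intermediate enhanced states, $T$ and $T'$. Thus over $\mathbb{Z}_2$ the two contributions cancel; over $\mathbb{Z}$ I must show the two contributions carry \emph{opposite} signs rather than equal ones.

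The key step is therefore a sign computation at the level of the ordered sequences $l$. By definition $(T,l')$ is incident with $(S,l)$ when $l'=l<x$ up to even permutation, so the generator reached via $T$ acquires the ordering $(l<x)<y = l<x<y$, while the generator reached via $T'$ (which resolves $y$ first) acquires $l<y<x$. These two orderings of the final state $U$ differ by the transposition swapping the last two crossings $x$ and $y$, which is an odd permutation; hence by the convention $-(U,m)=(U,p(m))$ for odd $p$, the two oriented states $(U,l<x<y)$ and $(U,l<y<x)$ are negatives of one another in $C_{\mathbb{Z}}(K_F)$. I would verify carefully that all other contributions to the two summands agree (the enhanced labels on the loops of $U$ are determined identically in both routes, since the local label bookkeeping is exactly the $\mathbb{Z}_2$ bookkeeping of Lemma \ref{b2} and does not interact with the ordering), so that the sole discrepancy between the two terms is this sign. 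Consequently the two incidence contributions to $\partial_{\mathbb{Z}}^2((S,l))$ sum to zero pairwise, giving $\partial_{\mathbb{Z}}^2=0$.

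I expect the main obstacle to be bookkeeping rather than conceptual: one must check that the pairing $T\leftrightarrow T'$ supplied by Lemma \ref{b2} is compatible with the orientation convention in every one of the local cases, i.e. that swapping the order in which $x$ and $y$ are resolved is always realized by a transposition of adjacent entries in the ordered list, independent of how $x$ and $y$ sit relative to the earlier crossings in $l$. Since both $x$ and $y$ are appended \emph{after} all crossings already in $l$, the two resulting lists $l<x<y$ and $l<y<x$ differ only in their final two entries, so the relevant permutation is genuinely a single adjacent transposition and the sign is unambiguously $-1$; this is the point I would state explicitly to close the argument. The enhanced-label analysis itself need not be redone, as it is inherited verbatim from the proof of Lemma \ref{b2}.
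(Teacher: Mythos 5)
Your proposal is correct and follows essentially the same route as the paper: the paper likewise pairs the two passages $S\to T\to U$ and $S\to T'\to U$ through the two orders of resolving the sites $x$ and $y$, observes that the resulting orientations $l<x<y$ and $l<y<x$ differ by an odd permutation and hence the oriented states cancel, and leaves the remaining label bookkeeping as "routine calculations." The only caveat is your phrase "precisely two intermediate enhanced states": a given $U$ may be reached through more than two (the case analysis in Lemma \ref{b2} produces terms with multiplicity four, for instance), but the cancellation still goes through because the contributions via $t$ and via $t'$ match up in equal numbers with opposite orientations, exactly as you indicate in your final verification step.
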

\begin{proof}
The proof of this lemma is similar to Lemma \ref{b2}. Instead of proving that every term appears even number of times, here it can be shown that the terms appear in pairs with opposite orientations. For example, in Case 1 shown in Figure \ref{fig:case1}, we have 
\begin{flalign*}
\left(\embeddpdf{-2}{1.5}{incase1-.pdf},l\right)\longrightarrow \left(\embeddpdf{-2}{1.5}{incase1--1.pdf},l<x\right)+\left(\embeddpdf{-2}{1.5}{incase1--.pdf},l<y\right)\\
\longrightarrow \left(\embeddpdf{-2}{1.5}{incase1---.pdf},l<x<y\right)+\left(\embeddpdf{-2}{1.5}{incase1---.pdf},l<y<x\right)=0.
\end{flalign*} 

All other cases are just routine calculations.

The proof for the invariance of the Legendrian Khovanov homology with $\mathbb{Z}$ coefficients under the Legendrian Reidemeister moves is similar to the one given in Theorem \ref{thm2}. In the proof of Theorem 
\ref{thm2}, we showed that the Legendrian Khovanov complex of the front projection obtained after applying the the LR move retracts onto a Legendrian Khovanov subcomplex which is isomorphic to the Legendrian Khovanov complex of the projection before the LR move. If we consider each generator used in the proof of Theorem \ref{thm2} with the orientation given as below then it will give the proof of invariance of the Legendrian Khovanov homology with $\mathbb{Z}$-coefficients. If we give an orientation on each generator of $C(K_F)$ we get the generators of $C_{\mathbb{Z}}(K_F)$. 

The generators of $C_{\mathbb{Z}}\left(\embeddpdf{-2}{2}{Lr11back.pdf}\right)$ and $C_{\mathbb{Z}}\left(\embeddpdf{-2}{2}{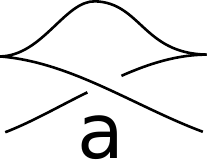}\right)$ are locally shown as 
$\left(\embeddpdf{-2}{2}{Lr11back.pdf},l\right)$ and $\left(\embeddpdf{-2}{2}{Lr1a.pdf},l\right),\left(\embeddpdf{-2}{2}{Lr1b.pdf},l<a\right)$ respectively.

The generators of $C_{\mathbb{Z}}\left(\embeddpdf{-4}{3}{Lr2back1.pdf}\right)$ and  $C_{\mathbb{Z}}\left(\embeddpdf{-4}{3}{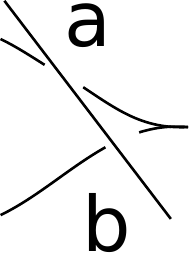}\right)$ are locally shown as 
$\left(\embeddpdf{-4}{3}{Lr2back1.pdf},l\right)$ and $\left(\embeddpdf{-4}{3}{Lr2aa.pdf},l\right),\left(\embeddpdf{-4}{3}{Lr2ab.pdf},l<b\right),\left(\embeddpdf{-4}{3}{Lr2ba.pdf},l<a\right),\left(\embeddpdf{-4}{3}{Lr2bb.pdf},l<a<b\right)$ respectively.

The generators of $C_{\mathbb{Z}}\left(\embeddpdf{-4}{3}{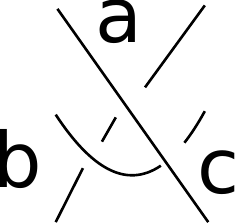}\right)$ are locally given as 
\begin{flalign*}
&\left(\embeddpdf{-4}{3}{Lr3AAA.pdf},l\right),\left(\embeddpdf{-4}{3}{LR3AAB.pdf},l<c\right),\left(\embeddpdf{-4}{3}{LR3ABA.pdf},l<a\right),\left(\embeddpdf{-4}{3}{LR3BAA.pdf},l<b\right),\left(\embeddpdf{-4}{3}{LR3ABB.pdf},l<a<c\right),\\
&\left(\embeddpdf{-4}{3}{LR3BAB.pdf},l<b<c\right),\left(\embeddpdf{-4}{3}{LR3BBA.pdf},l<b<a\right),\left(\embeddpdf{-4}{3}{LR3BBB.pdf},l<a<b<c\right).
\end{flalign*}

The generators of $C_{\mathbb{Z}}\left(\embeddpdf{-4}{3}{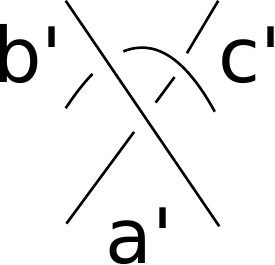}\right)$ are locally given as 
\begin{align*}
&\left(\embeddpdf{-4}{3}{Lr3aaa.pdf},l\right),\left(\embeddpdf{-4}{3}{LR3aab.pdf},l<c'\right),\left(\embeddpdf{-4}{3}{LR3aba.pdf},l<a'\right),\left(\embeddpdf{-4}{3}{LR3baa.pdf},l<b'\right),\left(\embeddpdf{-4}{3}{LR3abb.pdf},l<a'<c'\right),\\
&\left(\embeddpdf{-4}{3}{LR3bab.pdf},l'<b'<c'\right),\left(\embeddpdf{-4}{3}{LR3bba.pdf},l<b'<a'\right),\left(\embeddpdf{-4}{3}{LR3bbb.pdf},l<a'<b'<c'\right). &&\qedhere    
\end{align*}
\end{proof}
\section{Strengths and limitations of Legendrian Khovanov homology} 
In the previous sections, we obtained a formulation of Legendrian Jones polynomial and its categorification in such a way that we can recover the Jones polynomial and Khovanov homology respectively of the underlying knot.  It is, therefore, compelling to compare the new invariants with the existing ones. We begin by observing the following.
\begin{theorem}\label{thm6}
Let $K$ be a Legendrian knot, then $H_{i,k,j}(K)=H_{i,j-tb(K),j}(K)$ for all $i,j,k \in \mathbb{Z}$.
\end{theorem}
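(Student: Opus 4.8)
The plan is to reduce Theorem \ref{thm6} to a single grading identity: for \emph{every} enhanced Legendrian state $S$ of a front projection $K_F$, the third grading is rigidly determined by the second via $k(S) = j(S) - tb(K)$, where $tb(K)$ is the Thurston--Bennequin invariant. Once this is in hand, the triply-graded chain complex $C_{i,k,j}(K_F)$ is supported entirely on the line $k = j - tb(K)$, the grading $k$ carries no information beyond a global shift by $tb(K)$, and the claimed identity $H_{i,k,j}(K) = H_{i,j-tb(K),j}(K)$ follows. Forgetting the redundant index $k$ then identifies the surviving groups with the ordinary bigraded Khovanov homology of the underlying smooth knot, recovering Theorem \ref{thm3} and setting up Corollary \ref{cor1}.

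First I would compute $j(S) - k(S)$ directly from the definitions. Writing $j(S) = \tfrac{3\omega(K_F) - \sigma(S)}{2} + \tau(S)$ and $k(S) = \tfrac{c(S)}{2} - l(K_F) + \tau(S)$, the $\tau(S)$ term cancels immediately, leaving $j(S) - k(S) = \tfrac{3\omega(K_F) - \sigma(S)}{2} - \tfrac{c(S)}{2} + l(K_F)$. Next I would substitute the two bookkeeping identities governing resolutions of a front projection, namely $\sigma(S) = n(K_F) - 2B(S)$ (since $\sigma = A(s) - B(s)$ while $A(s) + B(s) = n(K_F)$) and $c(S) = c(K_F) + 2B(S)$ (each $B$-resolution introduces exactly two new cusps, as recorded when the $B$-resolution was defined). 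Both substitutions cause the dependence on $B(S)$ to cancel as well, so that
\[
j(S) - k(S) = \frac{3\omega(K_F) - n(K_F)}{2} - \frac{c(K_F)}{2} + l(K_F),
\]
a quantity depending only on $K_F$ and not on the particular state $S$.

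It then remains to identify this constant with $tb(K)$. Using $\omega(K_F) = r(K_F) - l(K_F)$ and $n(K_F) = r(K_F) + l(K_F)$, a one-line simplification gives $\tfrac{3\omega(K_F) - n(K_F)}{2} + l(K_F) = \omega(K_F)$, whence $j(S) - k(S) = \omega(K_F) - \tfrac{c(K_F)}{2} = tb(K)$, which is exactly the standard front-projection formula for the Thurston--Bennequin invariant. This is the step I expect to be the crux. The earlier lemma (that $j(S)=j(T)$ if and only if $k(S)=k(T)$) already shows that $j-k$ is constant on enhanced states, so the genuine content here is pinning down the \emph{value} of that constant as $tb(K)$; everything hinges on the two resolution identities and on recognizing the resulting front-projection expression as $tb(K)$.

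Finally, I would assemble the consequences. Since $k(S) = j(S) - tb(K)$ holds for all generators, the chain group $C_{i,k,j}(K_F)$ vanishes unless $k = j - tb(K)$; hence the homology is concentrated on this line, which is precisely the assertion $H_{i,k,j}(K) = H_{i,j-tb(K),j}(K)$, the grading $k$ being forced by $j$ up to the shift $tb(K)$. Forgetting $k$ collapses $\left(C_{i,k,j}(K_F), \partial\right)$ onto the bigraded complex $\left(C_{i,j}(K_F), \partial\right)$, which by construction is the Khovanov complex of the underlying smooth knot; therefore the surviving groups $H_{i,j-tb(K),j}(K)$ coincide with the ordinary Khovanov homology $H_{i,j}(K)$, completing the proof and yielding Corollary \ref{cor1} once one observes that the supporting line $k = j - tb(K)$ is the only datum distinguishing the Legendrian Khovanov homologies of two smoothly isotopic knots.
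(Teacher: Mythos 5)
Your proposal is correct and follows essentially the same route as the paper: both compute $j(S)-k(S)$ for an arbitrary enhanced Legendrian state, use $\sigma(S)=n(K_F)-2B(S)$ and $c(S)=c(K_F)+2B(S)$ to cancel the state-dependence, and identify the resulting constant with $\omega(K_F)-\tfrac{c(K_F)}{2}=tb(K)$, so that $C_{i,k,j}=C_{i,j-tb(K),j}$ and the homology identity follows. The only difference is cosmetic bookkeeping in the order of substitutions.
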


\begin{proof}
Let $S$ be any enhanced Legendrian state of $K_F$, where $K_F$ is front projection of $K$. Then,
 \begin{flalign*}
 j(S)-k(S)&=\frac{3\omega-\sigma(S)}{2}-\frac{c(S)}{2}+l(K_F)\\
 &= \frac{\omega-\sigma(S)}{2}+\omega-\frac{c(K_F)}{2}-B(S)+l(K_F)\\
 &=\frac{\omega-A(S)+B(S)-2B(S)+2l(K_F)}{2}+tb(K)\\
& =tb(K)
 \end{flalign*}
 Therefore, $C_{i,k,j}=C_{i,j-tb(K),j}$ for all $i,k,j \in \mathbb{Z}$. Thus, we have $H_{i,k,j}(K)=H_{i,j-tb(K),j}(K)$.
\end{proof}

\begin{proof}[Proof of Theorem \ref{thm3}]

By Theorem \ref{thm6}, we have $k(S)=j(S)-tb(K)$. Hence, we see that while the grading $k$ gives a new grading on the Legendrian Khovanov complex (which also captures the
geometry of the knot), Khovanov complex for the Legendrian knot and the underlying smooth knot are equal. That is, $C_{i,k,j}(K)=C_{i,j}(K)$ for all $i,k,j \in \mathbb{Z}.$ Since
$\partial$ preserves $j$ and $k$, it reduces to the boundary map for the Khovanov complex for the corresponding smooth knot after dropping the grading $k$ and
we get $ker(\partial_{i,k,j})=ker(\partial_{i,j})$ and $Im(\partial_{i,k,j})=Im(\partial_{i,j})$.
Hence, we have $H_{i,j}(K)=H_{i,j-tb(K),j}(K)$.
\end{proof}

 \begin{proof}[Proof of Corollary \ref{cor1}]
Let $K$ and $K'$ be smoothly isotopic, we get $H_{i,j}(K)=H_{i,j}(K')$ for all $i,j \in \mathbb{Z}$. Using  Theorem \ref{thm3} and Theorem \ref{thm6}, $H_{i,j-tb(K),j}(K)=H_{i,j-tb(K'),j}(K')$ if and only if $tb(K)=tb(K').$
 \end{proof}
 
 \begin{remark}
Theorem \ref{thm3} gives us an algebraic interpretation of the Thurston-Bennequin number (which counts the rotations of contact planes along the Legendrian knot) as the \emph{grade shift} in the newly introduced grading of the Legendrian Khovanov homology. As a consequence, stabilization of a Legendrian knot will lead to shifting the $k$-grading of the Legendrian Khovanov homology of the resultant knot.
\end{remark}

It is worth noting that the Legendrian Khovanov Homology fails to distinguish the following two Legendrian knots aka “Chekanov knots”  shown in Figure \ref{fig:chekanovknots}. These knots are not Legendrian isotopic as shown in \cite{Etnyre,cheka2} using the Chekanov-Eliashberg DGA \cite{cheka1,cheka2}. It is known that the knots are smoothly isotopic and their Thurston–Bennequin invariants agree. Therefore, by Corollary \ref{cor1}, they have the same Legendrian Khovanov homology groups.

\begin{figure}[ht!]
    \centering
    \includegraphics{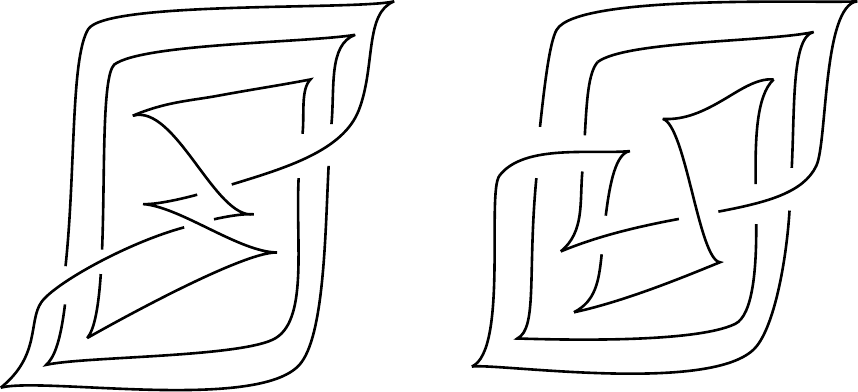}
    \caption{Front projections of the Chekanov knots}
    \label{fig:chekanovknots}
\end{figure}

The Legendrian Jones polynomial for these knots is given by
\begin{align*}
 A^{-18}r^5&(A^6\delta^5+6A^4r\delta^4+14A^2r^2\delta^3+A^2r^2\delta^5+16r^3\delta^2\\&+4r^3\delta^4+8A^{-2}r^4\delta+7A^{-2}r^4\delta^3+6A^{-4}r^5\delta^2+A^{-6}r^6\delta) 
\end{align*}
where $\delta=-A^2r^{-1}-A^{-2}r$.

\section{Appendix}
In this section, we provide the details of retraction maps and chain homotopies that are required for showing invariance of Legendrian Khovanov homology.

\begin{table}[h!]
    \centering
    \[
\begin{tblr}{|c|c|c|c|c|c|}
\hline
S & h_2(S)& \rho_2(S)&S&h_2(S)&\rho_2(S)\\
\hline
\embeddpdf{-4}{3}{Lr2aa.pdf}&0&0&\embeddpdf{-4}{3}{Lr2ba+.pdf}&0&0\\
\hline
    \embeddpdf{-4}{3}{Lr2ab--.pdf}&0&\embeddpdf{-4}{3}{Lr2ab--.pdf}&\embeddpdf{-4}{3}{Lr2ab+-.pdf}&0&\embeddpdf{-4}{3}{Lr2ab+-.pdf}+\embeddpdf{-4}{3}{Lr2ba+-.pdf}\\
    \hline
    \embeddpdf{-4}{3}{Lr2ab-+.pdf}&0& \embeddpdf{-4}{3}{Lr2ab-+.pdf}+\embeddpdf{-4}{3}{Lr2ba+-.pdf}&
\embeddpdf{-4}{3}{Lr2ab++.pdf}&0& \embeddpdf{-4}{3}{Lr2ab++.pdf}+\embeddpdf{-4}{3}{Lr2ba++.pdf},\\
    \hline
    \embeddpdf{-4}{3}{Lr2ab-.pdf}&0& \embeddpdf{-4}{3}{Lr2ab-.pdf}+\embeddpdf{-4}{3}{Lr2ba-+-.pdf}
&\embeddpdf{-4}{3}{Lr2ab+.pdf}&0& \embeddpdf{-4}{3}{Lr2ab+.pdf}+\embeddpdf{-4}{3}{Lr2ba++-.pdf}+\embeddpdf{-4}{3}{Lr2ba-++.pdf},\\
\hline
\embeddpdf{-4}{3}{Lr2ba+-+.pdf}&\embeddpdf{-4}{3}{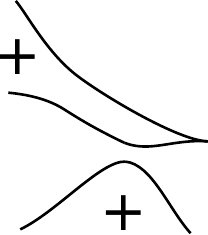}& -\embeddpdf{-4}{3}{Lr2ba-++.pdf}-\embeddpdf{-4}{3}{Lr2ab+.pdf}-\embeddpdf{-4}{3}{Lr2ba++-.pdf}
&\embeddpdf{-4}{3}{Lr2ba--+.pdf}&\embeddpdf{-4}{3}{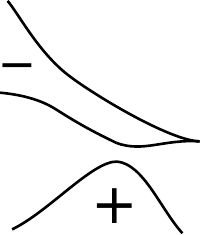}& -\embeddpdf{-4}{3}{Lr2ab-.pdf}-\embeddpdf{-4}{3}{Lr2ba-+-.pdf},\\
\hline
\embeddpdf{-4}{3}{Lr2ba+--.pdf}&\embeddpdf{-4}{3}{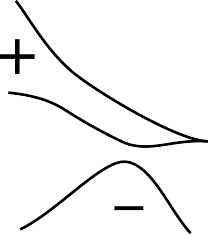}& -\embeddpdf{-4}{3}{Lr2ab-.pdf}-\embeddpdf{-4}{3}{Lr2ba-+-.pdf}
&\embeddpdf{-4}{3}{Lr2ba-+.pdf}&\embeddpdf{-4}{3}{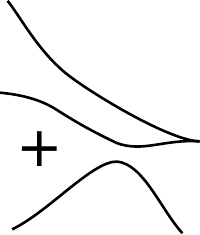} &-\embeddpdf{-4}{3}{Lr2ab+-.pdf}-\embeddpdf{-4}{3}{Lr2ab-+.pdf},\\
\hline
\embeddpdf{-4}{3}{Lr2ba--.pdf}&\embeddpdf{-4}{3}{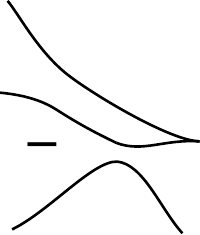}& -\embeddpdf{-4}{3}{Lr2ab--.pdf}&\embeddpdf{-4}{3}{Lr2ba---.pdf}&\embeddpdf{-4}{3}{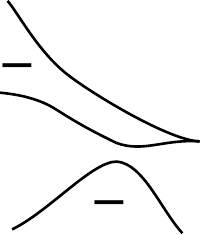}&0\\
\hline
\embeddpdf{-4}{3}{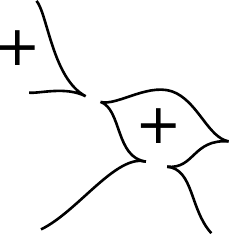}  & \embeddpdf{-4}{3}{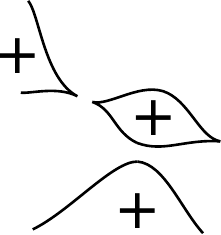}&0&
\embeddpdf{-4}{3}{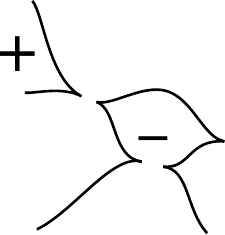}  & \embeddpdf{-4}{3}{Lr2ba++-.pdf}&0\\
\hline
\embeddpdf{-4}{3}{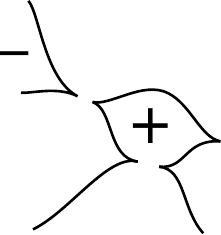}& \embeddpdf{-4}{3}{Lr2ba-++.pdf}&0&\embeddpdf{-4}{3}{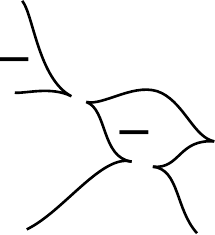} &\embeddpdf{-4}{3}{Lr2ba-+-.pdf}&0\\
\hline
\embeddpdf{-4}{3}{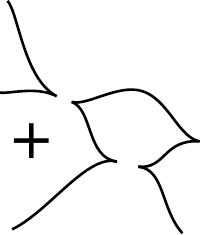}  &\embeddpdf{-4}{3}{Lr2ba++.pdf} &0& 
\embeddpdf{-4}{3}{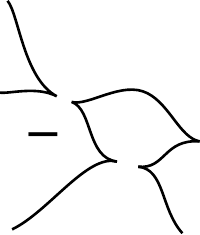}  & \embeddpdf{-4}{3}{Lr2ba+-.pdf} &0\\
\hline
    \end{tblr}
\]
\caption{}
    \label{tab:tab1}
\end{table}
In the local region where $LR3$ is applied, we get three strands after resolving the crossings. In Table \ref{tab:tab2}, \ref{tab:tab3}, \ref{tab:tab4}, \ref{tab:tab5}, \ref{tab:tab6} and \ref{tab:tab7}, we have used dotted lines to describe all the possibilities of how these strands might be connected in  a particular generator of $C\left(\embeddpdf{-2}{2}{LR3back.pdf}\right)$ and $C\left(\embeddpdf{-2}{2}{LR3for.pdf}\right)$. 

\begin{table}[ht!]
    \[
\begin{tblr}{|c|c|c|c|c|c|}
\hline
S & h_3(S)& \rho_3(S)&S&h_3(S)&\rho_3(S)\\
\hline
    \embeddpdf{-3.8}{3.8}{LR3AAA.pdf} & 0 &\embeddpdf{-3.8}{3.8}{LR3AAA.pdf}& \embeddpdf{-3.8}{3.8}{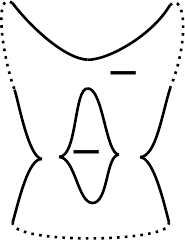} &-\embeddpdf{-3.8}{3.8}{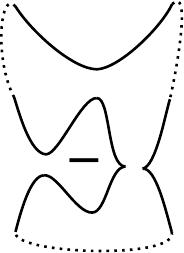}  & -\embeddpdf{-3.8}{3.8}{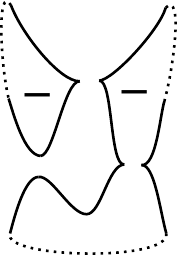} \\
    \hline
    \embeddpdf{-3.8}{3.8}{LR3AAB.pdf} & 0 & 0 & \embeddpdf{-3.8}{3.8}{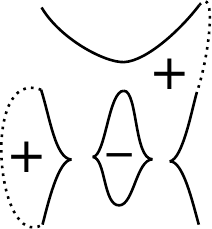} & -\embeddpdf{-3.8}{3.8}{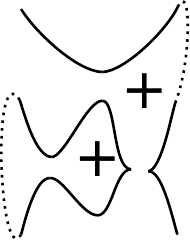} & -\embeddpdf{-3.8}{3.8}{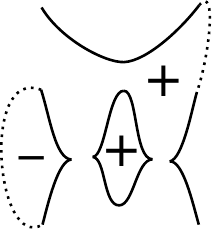}-\embeddpdf{-3.8}{3.8}{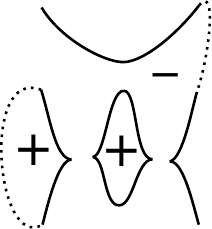}-\embeddpdf{-3.8}{3.8}{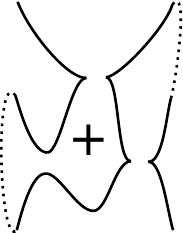}\\
    \hline
    \embeddpdf{-3.8}{3.8}{LR3BAA.pdf} & 0 & \embeddpdf{-3.8}{3.8}{LR3BAA.pdf}+\embeddpdf{-3.8}{3.8}{LR3AAB.pdf} &  \embeddpdf{-3.8}{3.8}{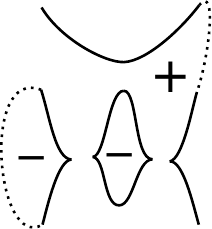} & -\embeddpdf{-3.8}{3.8}{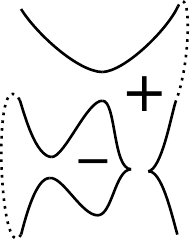} & -\embeddpdf{-3.8}{3.8}{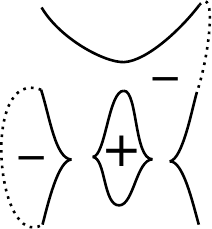}-\embeddpdf{-3.8}{3.8}{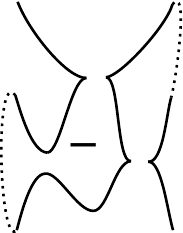} \\
    \hline
    \embeddpdf{-3.8}{3.8}{LR3ABA.pdf} & 0 & \embeddpdf{-3.8}{3.8}{LR3ABA.pdf} & \embeddpdf{-3.8}{3.8}{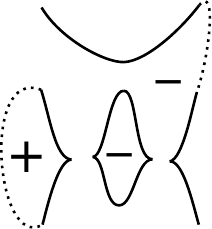} & -\embeddpdf{-3.8}{3.8}{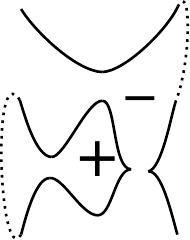} & -\embeddpdf{-3.8}{3.8}{LR3BAB+--1.pdf}-\embeddpdf{-3.8}{3.8}{LR3ABB-.pdf} \\
    \hline
    \embeddpdf{-3.8}{3.8}{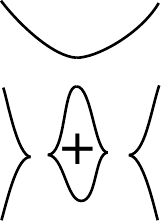} & 0 & 0 &\embeddpdf{-3.8}{3.8}{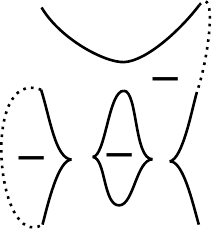} & -\embeddpdf{-3.8}{3.8}{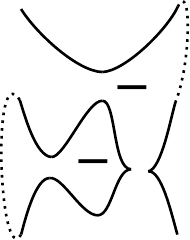} & 0\\
    \hline
    \embeddpdf{-3.8}{3.8}{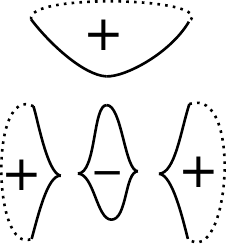} & -\embeddpdf{-3.8}{3.8}{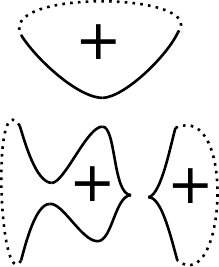} & -\embeddpdf{-3.8}{3.8}{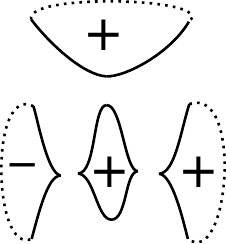}-\embeddpdf{-3.8}{3.8}{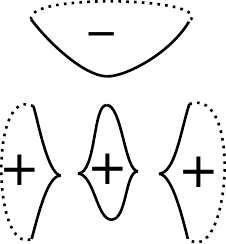}-\embeddpdf{-3.8}{3.8}{LR3ABB++}& \embeddpdf{-3.8}{3.8}{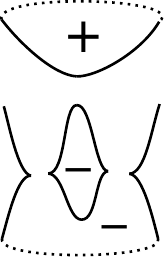} & -\embeddpdf{-3.8}{3.8}{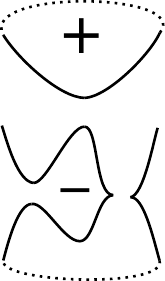}  & -\embeddpdf{-3.8}{3.8}{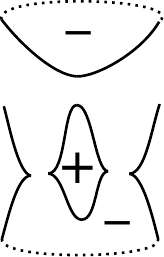}-\embeddpdf{-3.8}{3.8}{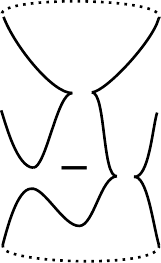} \\
    \hline
    \embeddpdf{-3.8}{3.8}{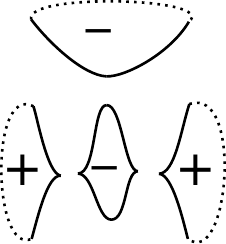} & -\embeddpdf{-3.8}{3.8}{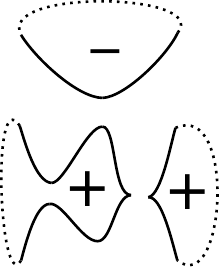} & -\embeddpdf{-3.8}{3.8}{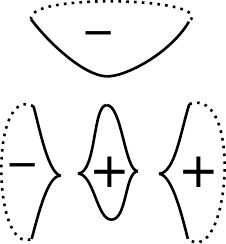}-\embeddpdf{-3.8}{3.8}{LR3ABB-+} &\embeddpdf{-3.8}{3.8}{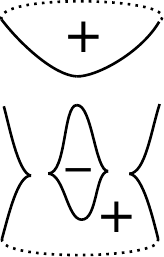} & -\embeddpdf{-3.8}{3.8}{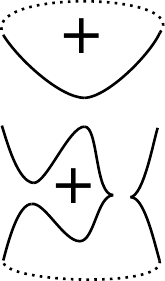}  & -\embeddpdf{-3.8}{3.8}{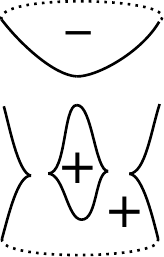}-\embeddpdf{-3.8}{3.8}{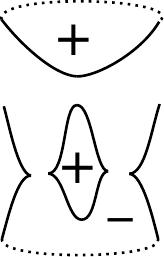}-\embeddpdf{-3.8}{3.8}{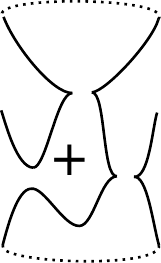}\\
    \hline
    \embeddpdf{-3.8}{3.8}{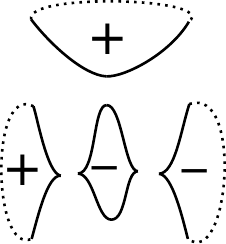} & -\embeddpdf{-3.8}{3.8}{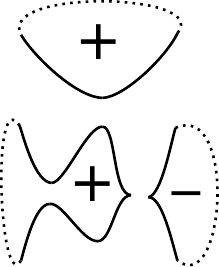} & -\embeddpdf{-3.8}{3.8}{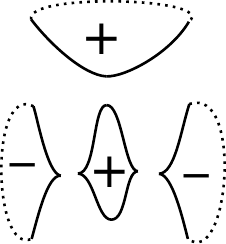}-\embeddpdf{-3.8}{3.8}{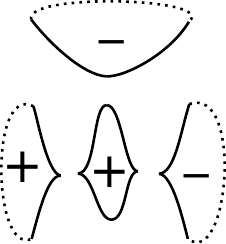}-\embeddpdf{-3.8}{3.8}{LR3ABB+-}&\embeddpdf{-3.8}{3.8}{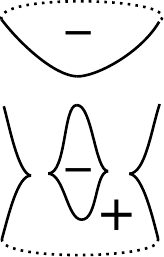} & -\embeddpdf{-3.8}{3.8}{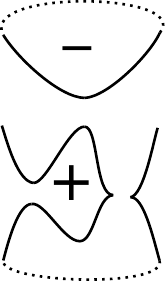}  & -\embeddpdf{-3.8}{3.8}{LR3BAB+--2.pdf}-\embeddpdf{-3.8}{3.8}{LR3ABB-1.pdf} \\
    \hline
    \embeddpdf{-3.8}{3.8}{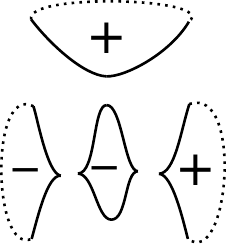} &- \embeddpdf{-3.8}{3.8}{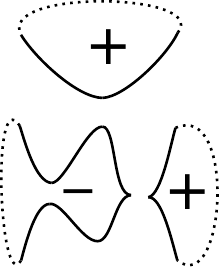} & -\embeddpdf{-3.8}{3.8}{LR3BAB+--+.pdf}-\embeddpdf{-3.8}{3.8}{LR3ABB-+} &\embeddpdf{-3.8}{3.8}{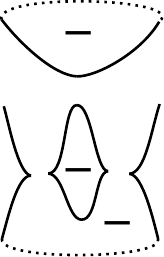} & -\embeddpdf{-3.8}{3.8}{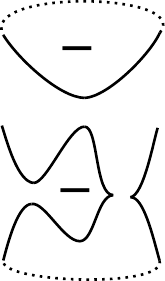}  & 0\\
    \hline
     \embeddpdf{-3.8}{3.8}{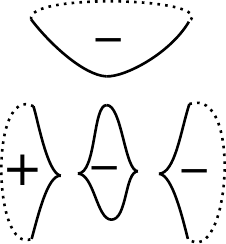} & -\embeddpdf{-3.8}{3.8}{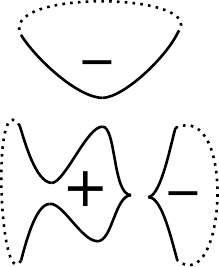} & -\embeddpdf{-3.8}{3.8}{LR3BAB+-+-.pdf}-\embeddpdf{-3.8}{3.8}{LR3ABB--}&\embeddpdf{-3.8}{3.8}{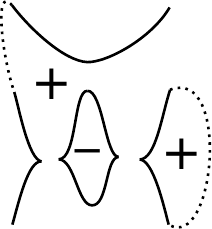} & -\embeddpdf{-3.8}{3.8}{LR3aab++1.pdf} & -\embeddpdf{-3.8}{3.8}{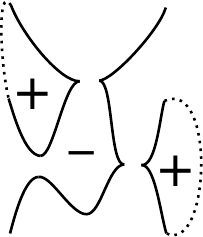}-\embeddpdf{-3.8}{3.8}{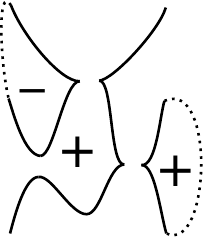} \\
    \hline
    \embeddpdf{-3.8}{3.8}{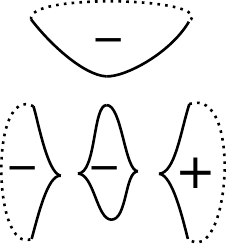} & -\embeddpdf{-3.8}{3.8}{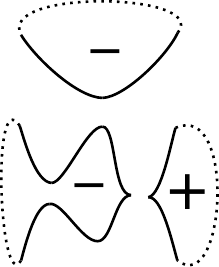} & 0 & \embeddpdf{-3.8}{3.8}{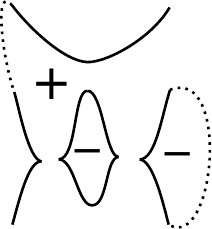} & -\embeddpdf{-3.8}{3.8}{LR3aab+-1.pdf} & -\embeddpdf{-3.8}{3.8}{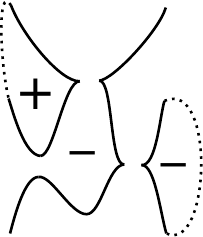}-\embeddpdf{-3.8}{3.8}{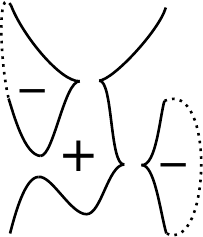}\\
    \hline
     \embeddpdf{-3.8}{3.8}{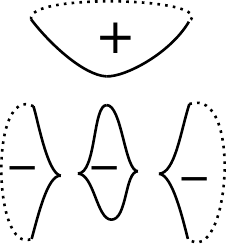} & -\embeddpdf{-3.8}{3.8}{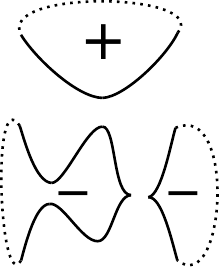} & -\embeddpdf{-3.8}{3.8}{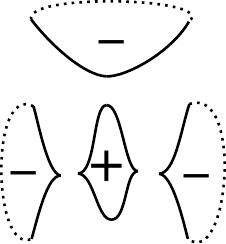}-\embeddpdf{-3.8}{3.8}{LR3ABB--} &\embeddpdf{-3.8}{3.8}{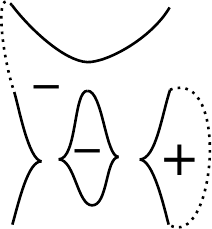} & -\embeddpdf{-3.8}{3.8}{LR3aab-+1.pdf} & -\embeddpdf{-3.8}{3.8}{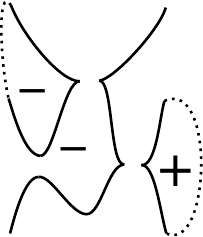}\\
    \hline
    \embeddpdf{-3.8}{3.8}{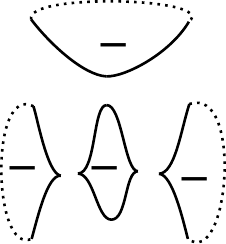} &-\embeddpdf{-3.8}{3.8}{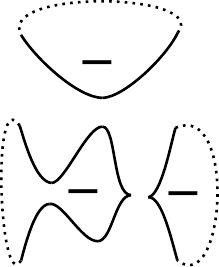}  & 0&\embeddpdf{-3.8}{3.8}{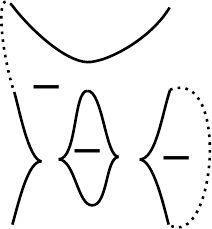} & -\embeddpdf{-3.8}{3.8}{LR3aab--1.pdf} & -\embeddpdf{-3.8}{3.8}{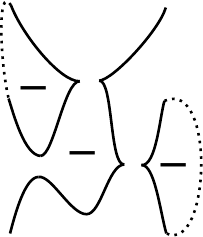} \\
    \hline
    \embeddpdf{-3.8}{3.8}{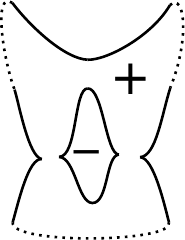} & -\embeddpdf{-3.8}{3.8}{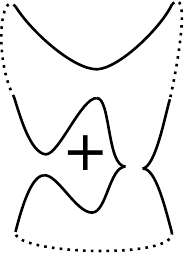} & -\embeddpdf{-3.8}{3.8}{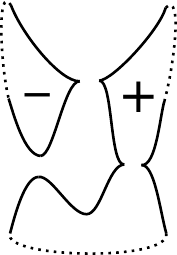}-\embeddpdf{-3.8}{3.8}{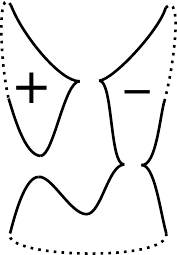}& \embeddpdf{-3.8}{3.8}{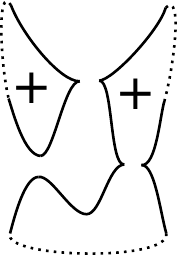} & 0 & \embeddpdf{-3.8}{3.8}{LR3ABB++1.pdf}+\embeddpdf{-3.8}{3.8}{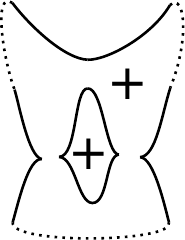}\\
    \hline
    \end{tblr}
\]
\caption{}
    \label{tab:tab2}
\end{table}

\begin{table}[ht!]
    \[
\begin{tblr}{|c|c|c|c|c|c|}
\hline
     S & h_3(S)& \rho_3(S)&S & h_3(S) & \rho_3(S)\\
    \hline
     \embeddpdf{-3.8}{3.8}{LR3ABB-+1.pdf} & 0 & \embeddpdf{-3.8}{3.8}{LR3ABB-+1.pdf} +\embeddpdf{-3.8}{3.8}{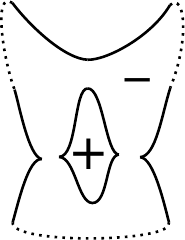}&\embeddpdf{-3.8}{3.8}{LR3ABB+--.pdf} & 0 & \embeddpdf{-3.8}{3.8}{LR3ABB+--.pdf}+\embeddpdf{-3.8}{3.8}{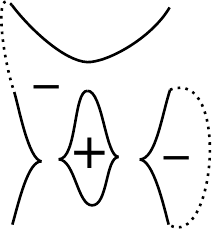}\\
    \hline
     \embeddpdf{-3.8}{3.8}{LR3ABB+-1.pdf} & 0 & \embeddpdf{-3.8}{3.8}{LR3ABB+-1.pdf}+\embeddpdf{-3.8}{3.8}{LR3BAB+-.pdf}&\embeddpdf{-3.8}{3.8}{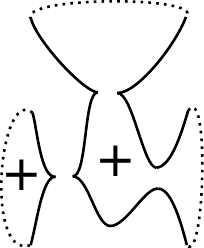} & 0 & \embeddpdf{-3.8}{3.8}{LR3BBA++.pdf}+\embeddpdf{-3.8}{3.8}{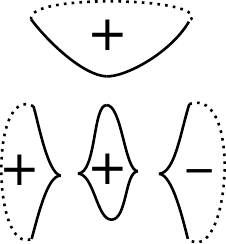}+\embeddpdf{-3.8}{3.8}{LR3BAB++-+.pdf} \\
     \hline
     \embeddpdf{-3.8}{3.8}{LR3ABB--1.pdf} &0  & \embeddpdf{-3.8}{3.8}{LR3ABB--1.pdf}& \embeddpdf{-3.8}{3.8}{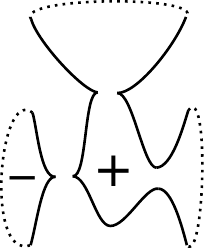} & 0 & \embeddpdf{-3.8}{3.8}{LR3BBA-+.pdf}+\embeddpdf{-3.8}{3.8}{LR3BAB+--+.pdf}+\embeddpdf{-3.8}{3.8}{LR3BAB+-+-.pdf} \\
    \hline
     \embeddpdf{-3.8}{3.8}{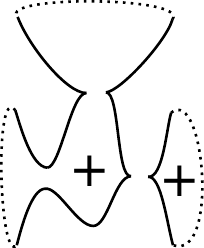} & 0 & \embeddpdf{-3.8}{3.8}{LR3ABB++.pdf}+\embeddpdf{-3.8}{3.8}{LR3BAB+-++.pdf}+\embeddpdf{-3.8}{3.8}{LR3BAB++-+.pdf} & \embeddpdf{-3.8}{3.8}{LR3ABB---.pdf} & 0 & \embeddpdf{-3.8}{3.8}{LR3ABB---.pdf}\\
     \hline
     \embeddpdf{-3.8}{3.8}{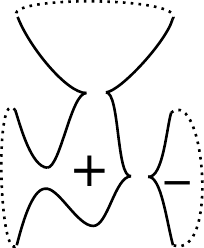} &0  & \embeddpdf{-3.8}{3.8}{LR3ABB+-.pdf}+\embeddpdf{-3.8}{3.8}{LR3BAB+-+-.pdf}+\embeddpdf{-3.8}{3.8}{LR3BAB++--.pdf}&
     \embeddpdf{-3.8}{3.8}{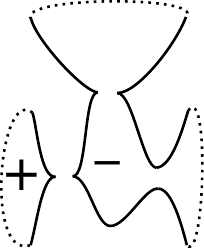} &0  & \embeddpdf{-3.8}{3.8}{LR3BBA+-.pdf}+\embeddpdf{-3.8}{3.8}{LR3BAB++--.pdf} \\
    \hline
     \embeddpdf{-3.8}{3.8}{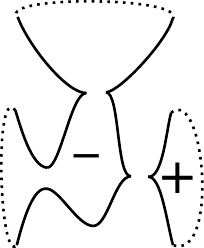} & 0 & \embeddpdf{-3.8}{3.8}{LR3ABB-+.pdf}+\embeddpdf{-3.8}{3.8}{LR3BAB+--+.pdf} &\embeddpdf{-3.8}{3.8}{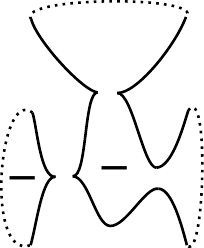} & 0 & \embeddpdf{-3.8}{3.8}{LR3BBA--.pdf} +\embeddpdf{-3.8}{3.8}{LR3BAB+---.pdf}\\
     \hline
     \embeddpdf{-3.8}{3.8}{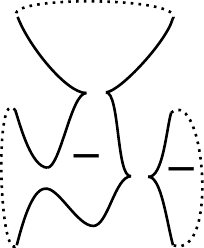} &0  & \embeddpdf{-3.8}{3.8}{LR3ABB--.pdf}+\embeddpdf{-3.8}{3.8}{LR3BAB+---.pdf}&\embeddpdf{-5}{5}{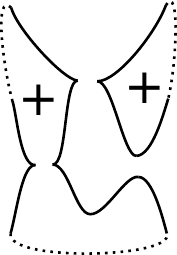} & 0 & \embeddpdf{-3.8}{3.8}{LR3BBA++1.pdf}+\embeddpdf{-3.8}{3.8}{LR3BAB++.pdf} \\
    \hline
     \embeddpdf{-3.8}{3.8}{LR3ABB+.pdf} & 0 & \embeddpdf{-3.8}{3.8}{LR3ABB+.pdf}+\embeddpdf{-3.8}{3.8}{LR3BAB++-1.pdf}+\embeddpdf{-3.8}{3.8}{LR3BAB+-+1.pdf}&\embeddpdf{-3.8}{3.8}{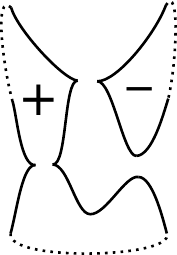} &0  & \embeddpdf{-3.8}{3.8}{LR3BBA+-1.pdf}+\embeddpdf{-3.8}{3.8}{LR3BAB+-.pdf} \\
     \hline
     \embeddpdf{-3.8}{3.8}{LR3ABB-.pdf} & 0 & \embeddpdf{-3.8}{3.8}{LR3ABB-.pdf}+\embeddpdf{-3.8}{3.8}{LR3BAB+--1.pdf} & \embeddpdf{-3.8}{3.8}{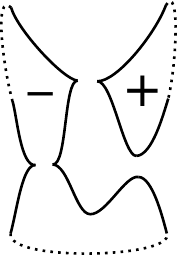} &  0& \embeddpdf{-3.8}{3.8}{LR3BBA-+1.pdf}+\embeddpdf{-3.8}{3.8}{LR3BAB+-.pdf} \\
     \hline
     \embeddpdf{-3.8}{3.8}{LR3ABB+1.pdf} & 0 & \embeddpdf{-3.8}{3.8}{LR3ABB+1.pdf}+\embeddpdf{-3.8}{3.8}{LR3BAB++-2.pdf}+\embeddpdf{-3.8}{3.8}{LR3BAB+-+2.pdf}&\embeddpdf{-3.8}{3.8}{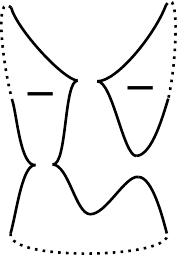} & 0 & \embeddpdf{-3.8}{3.8}{LR3BBA--1.pdf} \\
     \hline
      \embeddpdf{-3.8}{3.8}{LR3ABB-1.pdf} & 0 & \embeddpdf{-3.8}{3.8}{LR3ABB-1.pdf}+\embeddpdf{-3.8}{3.8}{LR3BAB+--2.pdf}& \embeddpdf{-3.8}{3.8}{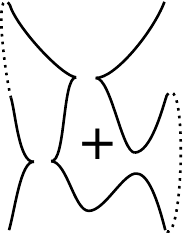} & 0 & \embeddpdf{-3.8}{3.8}{LR3BBA+.pdf}+\embeddpdf{-3.8}{3.8}{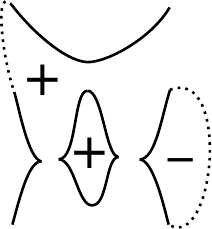}+\embeddpdf{-3.8}{3.8}{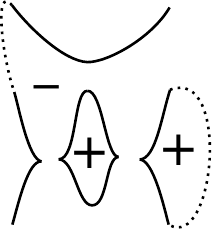} \\
     \hline
     \embeddpdf{-3.8}{3.8}{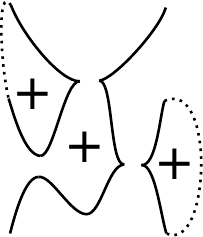} & 0 & \embeddpdf{-3.8}{3.8}{LR3ABB+++.pdf}+\embeddpdf{-3.8}{3.8}{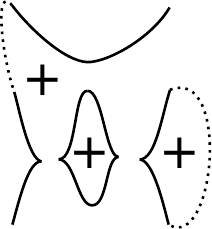}& \embeddpdf{-3.8}{3.8}{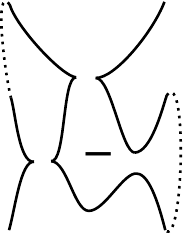} & 0 & \embeddpdf{-3.8}{3.8}{LR3BBA-.pdf}+\embeddpdf{-3.8}{3.8}{LR3BAB+--.pdf}  \\
     \hline
      \embeddpdf{-3.8}{3.8}{LR3ABB+-+.pdf} & 0 & \embeddpdf{-3.8}{3.8}{LR3ABB+-+.pdf}+\embeddpdf{-3.8}{3.8}{LR3BAB+-+.pdf} &\embeddpdf{-3.8}{3.8}{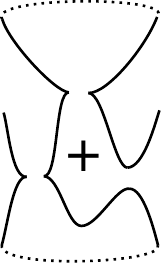} & 0 & \embeddpdf{-3.8}{3.8}{LR3BBA+1.pdf}+\embeddpdf{-3.8}{3.8}{LR3BAB++-2.pdf}+\embeddpdf{-3.8}{3.8}{LR3BAB+-+2.pdf}\\
     \hline
      \embeddpdf{-3.8}{3.8}{LR3ABB-++.pdf} &0  & \embeddpdf{-3.8}{3.8}{LR3ABB-++.pdf}+\embeddpdf{-3.8}{3.8}{LR3BAB+-+.pdf}&\embeddpdf{-3.8}{3.8}{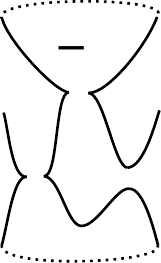} & 0 & \embeddpdf{-3.8}{3.8}{LR3BBA-1.pdf}+\embeddpdf{-3.8}{3.8}{LR3BAB+--2.pdf} \\
     \hline
     \embeddpdf{-3.8}{3.8}{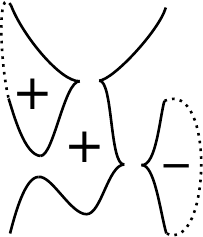} & 0 & \embeddpdf{-3.8}{3.8}{LR3ABB++-.pdf}+\embeddpdf{-3.8}{3.8}{LR3BAB++-.pdf}&\embeddpdf{-3.8}{3.8}{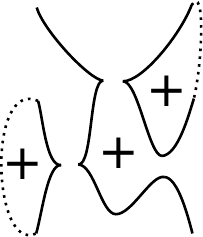} & 0 & \embeddpdf{-3.8}{3.8}{LR3BBA+++.pdf}+\embeddpdf{-3.8}{3.8}{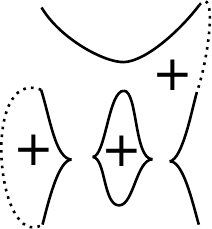} \\
     \hline
     \embeddpdf{-3.8}{3.8}{LR3ABB--+.pdf} & 0 & \embeddpdf{-3.8}{3.8}{LR3ABB--+.pdf}&\embeddpdf{-3.8}{3.8}{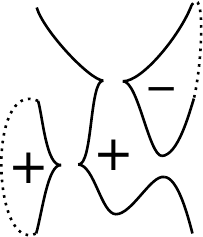} & 0 & \embeddpdf{-3.8}{3.8}{LR3BBA++-.pdf}+\embeddpdf{-3.8}{3.8}{LR3BAB++-1.pdf} \\
     \hline
\end{tblr}
\]
     \caption{}
    \label{tab:tab3}
\end{table}

\begin{table}[ht!]
    \centering
   \[
\begin{tblr}{|c|c|c|c|c|c|}
\hline
     S & h_3(S)& \rho_3(S)&S & h_3(S) & \rho_3(S)\\
\hline
      \hspace{5pt}\embeddpdf{-3.8}{3.8}{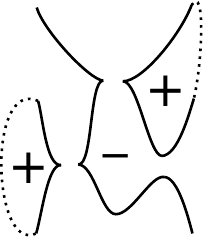}  \hspace{5pt}&  \hspace{15pt}0  \hspace{15pt}&  \hspace{8pt}\embeddpdf{-3.8}{3.8}{LR3BBA+-+.pdf}+\embeddpdf{-3.8}{3.8}{LR3BAB++-1.pdf} \hspace{8pt}&  \hspace{5pt}\embeddpdf{-3.8}{3.8}{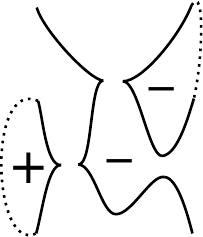} \hspace{5pt} & \hspace{5pt}0 \hspace{5pt}  & \hspace{8pt} \embeddpdf{-3.8}{3.8}{LR3BBA+--.pdf} \hspace{8pt} \\
     \hline
      \embeddpdf{-3.8}{3.8}{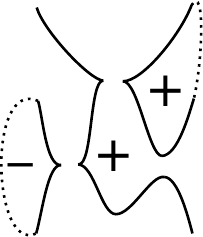} & 0 & \embeddpdf{-3.8}{3.8}{LR3BBA-++.pdf}+\embeddpdf{-3.8}{3.8}{LR3BAB+-+1.pdf} & \embeddpdf{-3.8}{3.8}{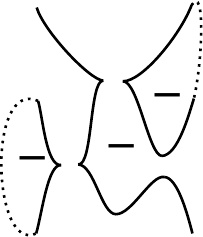} & 0 & \embeddpdf{-3.8}{3.8}{LR3BBA---.pdf}\\
     \hline
     \embeddpdf{-3.8}{3.8}{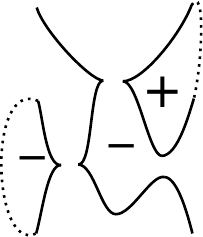} &  0& \embeddpdf{-3.8}{3.8}{LR3BBA--+.pdf}+\embeddpdf{-3.8}{3.8}{LR3BAB+--1.pdf}&  \embeddpdf{-3.8}{3.8}{LR3ABB-+-.pdf}&0 & \embeddpdf{-3.8}{3.8}{LR3ABB-+-.pdf}+\embeddpdf{-3.8}{3.8}{LR3BAB+--.pdf}\\
     \hline
      \embeddpdf{-3.8}{3.8}{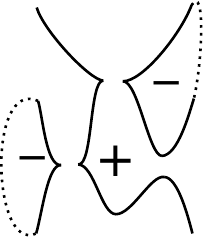} & 0 & \embeddpdf{-3.8}{3.8}{LR3BBA-+-.pdf}+\embeddpdf{-3.8}{3.8}{LR3BAB+--1.pdf}& \embeddpdf{-3.8}{3.8}{LR3BBB.pdf} &  \embeddpdf{-3.8}{3.8}{LR3BAB+.pdf}& 0 \\
     \hline
\end{tblr}
\] 
\caption{}
    \label{tab:tab4}
\end{table}

\begin{table}[ht!]
    \centering
    \[
\begin{tblr}{|c|c|c|c|c|c|}
\hline
S' & h_3'(S')& \rho_3'(S')&S'&h_3'(S')&\rho_3'(S')\\
\hline
    \embeddpdf{-3.8}{3.8}{LR3aaa.pdf} & 0 &\embeddpdf{-3.8}{3.8}{LR3aaa.pdf} &\embeddpdf{-3.8}{3.8}{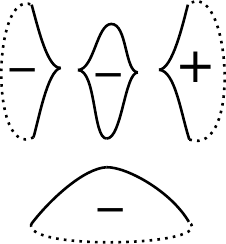} & \embeddpdf{-3.8}{3.8}{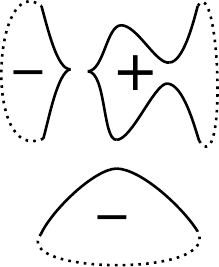} & -\embeddpdf{-3.8}{3.8}{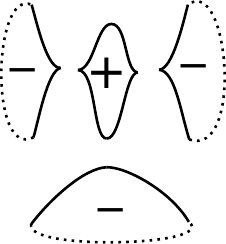}-\embeddpdf{-3.8}{3.8}{LR3bba--}\\
    \hline
    \embeddpdf{-3.8}{3.8}{LR3baa.pdf} & 0 & 0&\embeddpdf{-3.8}{3.8}{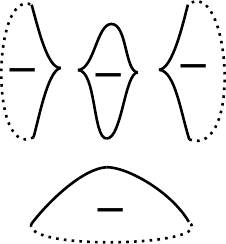} &  \embeddpdf{-3.8}{3.8}{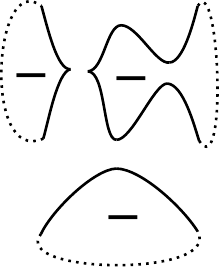}& 0 \\
    \hline
    \embeddpdf{-3.8}{3.8}{LR3aab.pdf} & 0 & \embeddpdf{-3.8}{3.8}{LR3aab.pdf}+\embeddpdf{-3.8}{3.8}{LR3baa.pdf} &\embeddpdf{-3.8}{3.8}{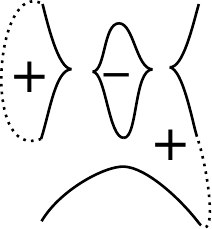} & \embeddpdf{-3.8}{3.8}{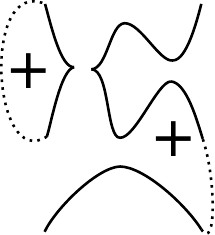}& -\embeddpdf{-3.8}{3.8}{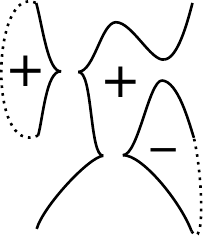}-\embeddpdf{-3.8}{3.8}{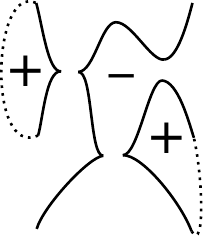}\\
    \hline
    \embeddpdf{-3.8}{3.8}{LR3aba.pdf} & 0 & \embeddpdf{-3.8}{3.8}{LR3aba.pdf}& \embeddpdf{-3.8}{3.8}{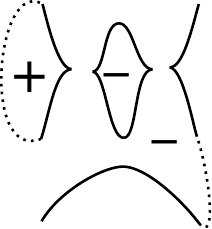} & \embeddpdf{-3.8}{3.8}{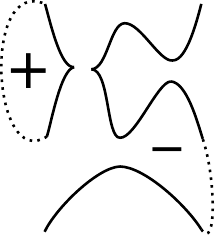} & -\embeddpdf{-3.8}{3.8}{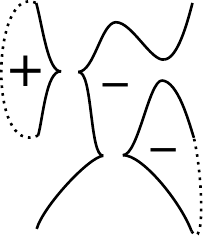}  \\
    \hline
    \embeddpdf{-3.8}{3.8}{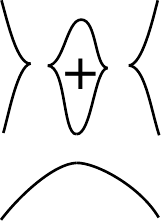} & 0 & 0 &\embeddpdf{-3.8}{3.8}{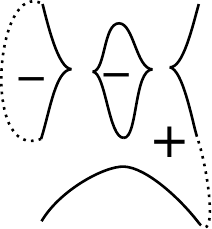} & \embeddpdf{-3.8}{3.8}{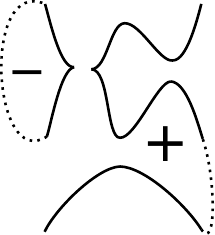} & -\embeddpdf{-3.8}{3.8}{LR3bba++-.pdf}-\embeddpdf{-3.8}{3.8}{LR3bba+-+.pdf}\\
    \hline
    \embeddpdf{-3.8}{3.8}{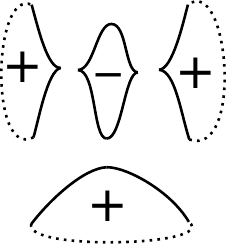} & \embeddpdf{-3.8}{3.8}{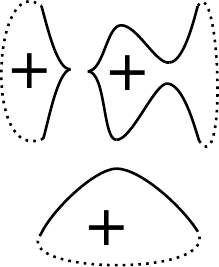} & -\embeddpdf{-3.8}{3.8}{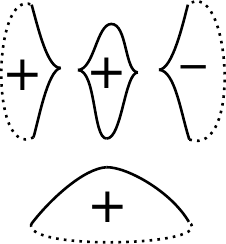}-\embeddpdf{-3.8}{3.8}{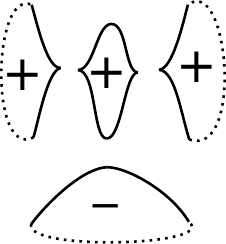}-\embeddpdf{-3.8}{3.8}{LR3bba++} &\embeddpdf{-3.8}{3.8}{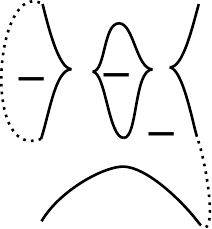} & \embeddpdf{-3.8}{3.8}{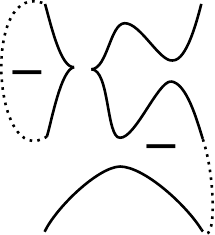} & -\embeddpdf{-3.8}{3.8}{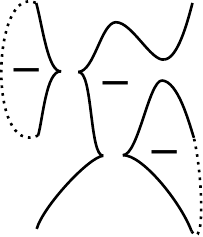}\\
    \hline
    \embeddpdf{-3.8}{3.8}{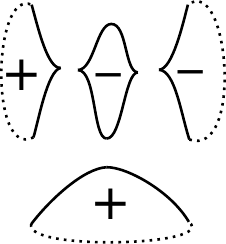} & \embeddpdf{-3.8}{3.8}{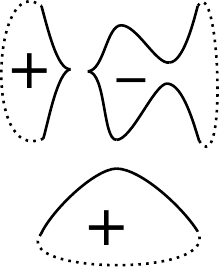} & -\embeddpdf{-3.8}{3.8}{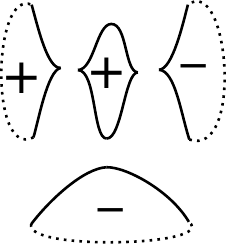}-\embeddpdf{-3.8}{3.8}{LR3bba+-} &\embeddpdf{-3.8}{3.8}{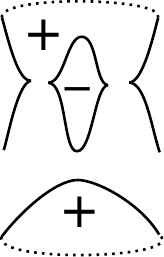} & \embeddpdf{-3.8}{3.8}{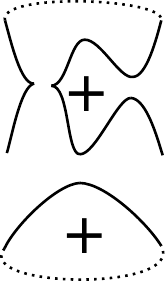} & -\embeddpdf{-3.8}{3.8}{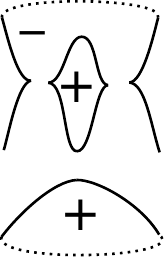}-\embeddpdf{-3.8}{3.8}{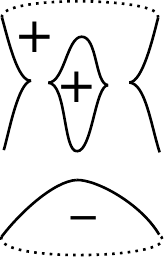}-\embeddpdf{-3.8}{3.8}{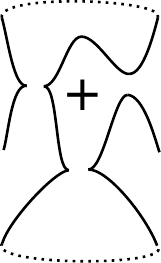} \\
    \hline
    \embeddpdf{-3.8}{3.8}{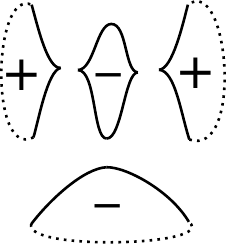} & \embeddpdf{-3.8}{3.8}{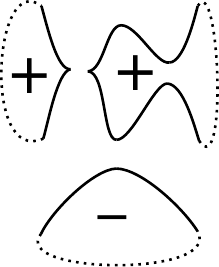}& -\embeddpdf{-3.8}{3.8}{LR3bab++--.pdf}-\embeddpdf{-3.8}{3.8}{LR3bba+-}&\embeddpdf{-3.8}{3.8}{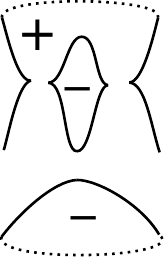} & \embeddpdf{-3.8}{3.8}{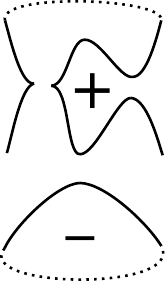} & -\embeddpdf{-3.8}{3.8}{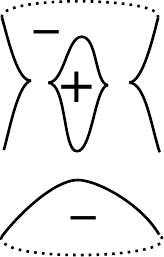}-\embeddpdf{-3.8}{3.8}{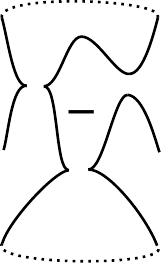} \\
    \hline
    \embeddpdf{-3.8}{3.8}{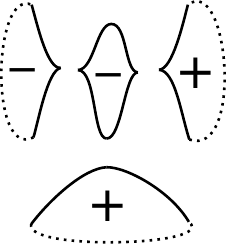} &  \embeddpdf{-3.8}{3.8}{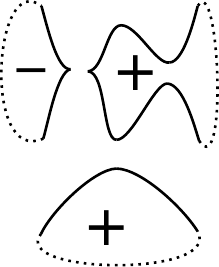}& -\embeddpdf{-3.8}{3.8}{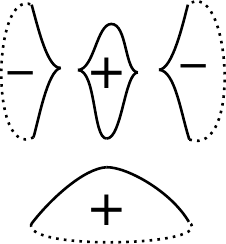}-\embeddpdf{-3.8}{3.8}{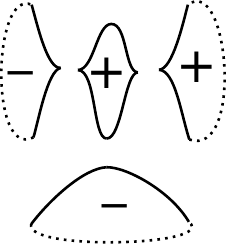}-\embeddpdf{-3.8}{3.8}{LR3bba-+}&\embeddpdf{-3.8}{3.8}{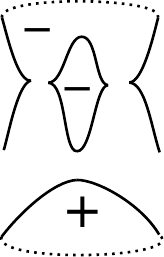} & \embeddpdf{-3.8}{3.8}{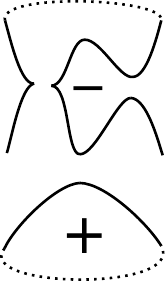} & -\embeddpdf{-3.8}{3.8}{LR3bab+--2.pdf}-\embeddpdf{-3.8}{3.8}{LR3bba-1.pdf} \\
    \hline
\end{tblr}
\]
\caption{}
    \label{tab:tab5}
\end{table}

\begin{table}[ht!]
    \centering
   \[
\begin{tblr}{|c|c|c|c|c|c|}
\hline
     S' & h_3'(S')& \rho_3'(S')&S' & h_3'(S') & \rho_3'(S')\\
      \hline
     \embeddpdf{-3.8}{3.8}{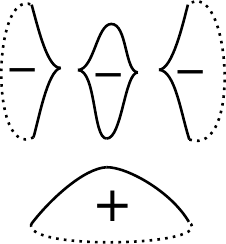} & \embeddpdf{-3.8}{3.8}{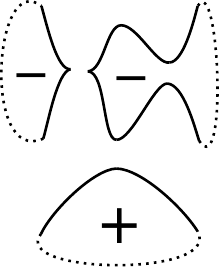} & -\embeddpdf{-3.8}{3.8}{LR3bab+---.pdf}-\embeddpdf{-3.8}{3.8}{LR3bba--} & \embeddpdf{-3.8}{3.8}{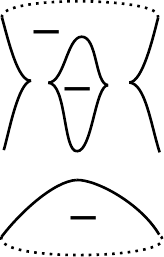} & \embeddpdf{-3.8}{3.8}{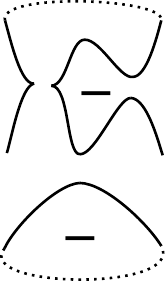} & 0\\
    \hline
     \embeddpdf{-3.8}{3.8}{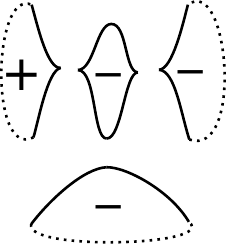} & \embeddpdf{-3.8}{3.8}{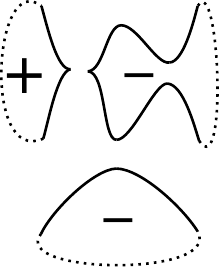} & 0&\embeddpdf{-3.8}{3.8}{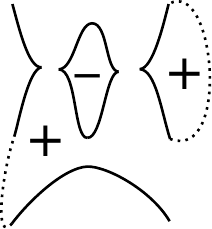} & \embeddpdf{-3.8}{3.8}{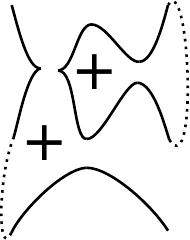} & -\embeddpdf{-3.8}{3.8}{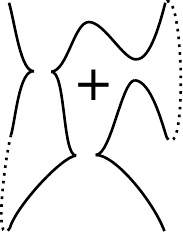}-\embeddpdf{-3.8}{3.8}{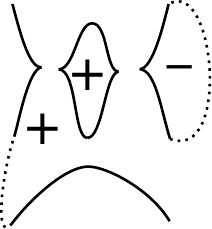}   \\
\hline
    \embeddpdf{-3.8}{3.8}{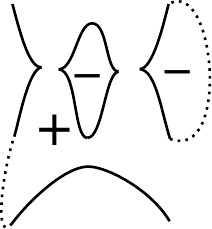} & \embeddpdf{-3.8}{3.8}{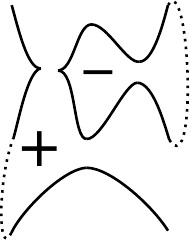} & -\embeddpdf{-3.8}{3.8}{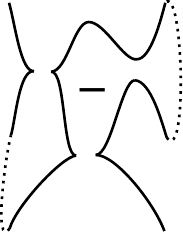}-\embeddpdf{-3.8}{3.8}{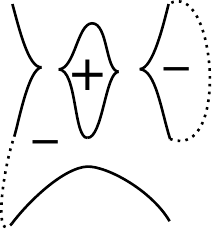} &\embeddpdf{-3.8}{3.8}{LR3bba+1.pdf} & 0 & \embeddpdf{-3.8}{3.8}{LR3bba+1.pdf}+\embeddpdf{-3.8}{3.8}{LR3bab++-2.pdf}+\embeddpdf{-3.8}{3.8}{LR3bab+-+2.pdf}\\
    \hline
    \embeddpdf{-3.8}{3.8}{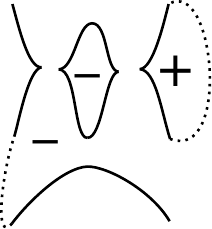} & \embeddpdf{-3.8}{3.8}{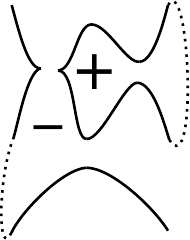} & -\embeddpdf{-3.8}{3.8}{LR3bba-.pdf} -\embeddpdf{-3.8}{3.8}{LR3bab+--.pdf}&\embeddpdf{-3.8}{3.8}{LR3bba-1.pdf} & 0 & \embeddpdf{-3.8}{3.8}{LR3bba-1.pdf}+\embeddpdf{-3.8}{3.8}{LR3bab+--2.pdf} \\
    \hline
    \embeddpdf{-3.8}{3.8}{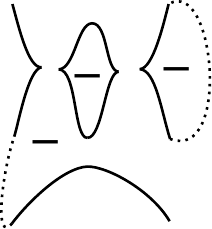} & \embeddpdf{-3.8}{3.8}{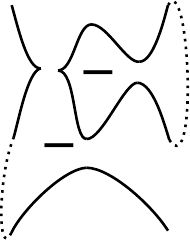} & 0 &\embeddpdf{-3.8}{3.8}{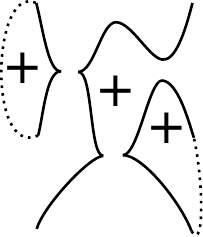} & 0 & \embeddpdf{-3.8}{3.8}{LR3bba+++.pdf}+\embeddpdf{-3.8}{3.8}{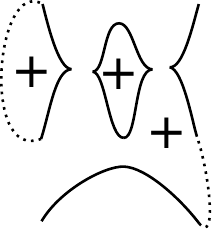}\\
    \hline
    \embeddpdf{-3.8}{3.8}{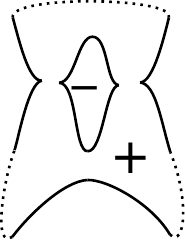} & \embeddpdf{-3.8}{3.8}{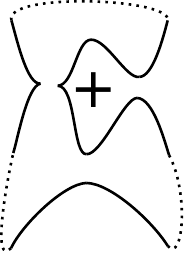} & -\embeddpdf{-3.8}{3.8}{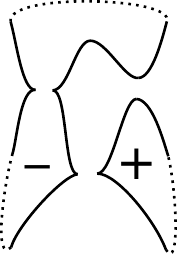}-\embeddpdf{-3.8}{3.8}{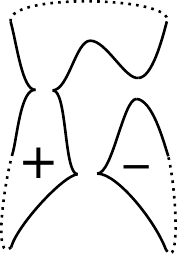}&\embeddpdf{-3.8}{3.8}{LR3bba+-+.pdf} & 0 & \embeddpdf{-3.8}{3.8}{LR3bba+-+.pdf}+\embeddpdf{-3.8}{3.8}{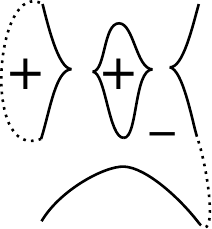} \\
    \hline
     \embeddpdf{-3.8}{3.8}{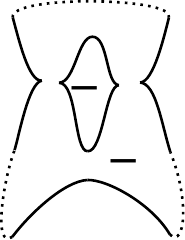} & \embeddpdf{-3.8}{3.8}{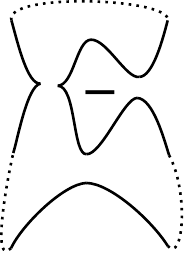} & -\embeddpdf{-3.8}{3.8}{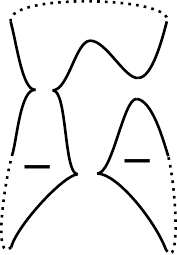}&\embeddpdf{-3.8}{3.8}{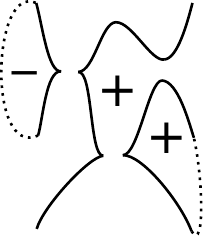} &0  & \embeddpdf{-3.8}{3.8}{LR3bba-++.pdf}+\embeddpdf{-3.8}{3.8}{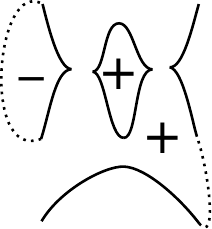}  \\
    \hline
     \embeddpdf{-3.8}{3.8}{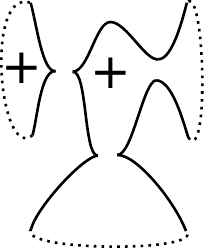} & 0 & \embeddpdf{-3.8}{3.8}{LR3bba++.pdf}+\embeddpdf{-3.8}{3.8}{LR3bab++-+.pdf} +\embeddpdf{-3.8}{3.8}{LR3bab+++-.pdf} &\embeddpdf{-3.8}{3.8}{LR3bba++-.pdf} & 0 & \embeddpdf{-3.8}{3.8}{LR3bba++-.pdf}+\embeddpdf{-3.8}{3.8}{LR3bab++-1.pdf}\\
     \hline
     \embeddpdf{-3.8}{3.8}{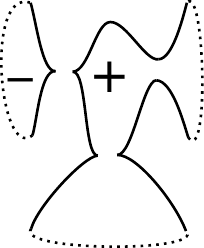} & 0 & \embeddpdf{-3.8}{3.8}{LR3bba-+.pdf} +\embeddpdf{-3.8}{3.8}{LR3bab+--+.pdf}+\embeddpdf{-3.8}{3.8}{LR3bab+-+-.pdf}&\embeddpdf{-3.8}{3.8}{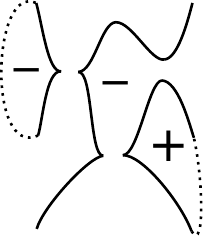} &  0& \embeddpdf{-3.8}{3.8}{LR3bba--+.pdf}+\embeddpdf{-3.8}{3.8}{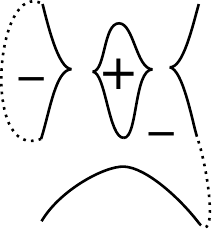} \\
    \hline
     \embeddpdf{-3.8}{3.8}{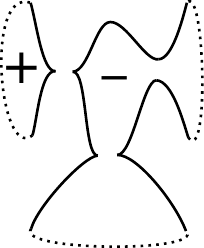} & 0 & \embeddpdf{-3.8}{3.8}{LR3bba+-.pdf}+\embeddpdf{-3.8}{3.8}{LR3bab++--.pdf}&\embeddpdf{-3.8}{3.8}{LR3bba+--.pdf} & 0 & \embeddpdf{-3.8}{3.8}{LR3bba+--.pdf} \\
     \hline
     \embeddpdf{-3.8}{3.8}{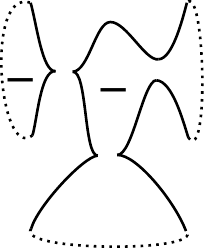} &0  & \embeddpdf{-3.8}{3.8}{LR3bba--.pdf}+\embeddpdf{-3.8}{3.8}{LR3bab+---.pdf} &\embeddpdf{-3.8}{3.8}{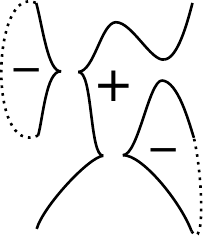} & 0 & \embeddpdf{-3.8}{3.8}{LR3bba-+-.pdf}+\embeddpdf{-3.8}{3.8}{LR3bab+--1.pdf}\\
    \hline
     \embeddpdf{-3.8}{3.8}{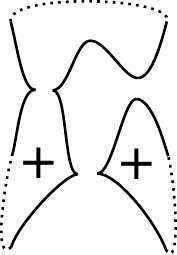} & 0 & \embeddpdf{-3.8}{3.8}{LR3bba++1.pdf}+\embeddpdf{-3.8}{3.8}{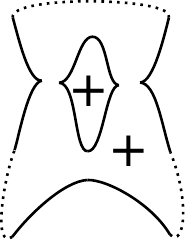}&\embeddpdf{-3.8}{3.8}{LR3bba---.pdf} & 0 & \embeddpdf{-3.8}{3.8}{LR3bba---.pdf}\\
     \hline
      \embeddpdf{-3.8}{3.8}{LR3bba+-1.pdf} &0  & \embeddpdf{-3.8}{3.8}{LR3bba+-1.pdf}+\embeddpdf{-3.8}{3.8}{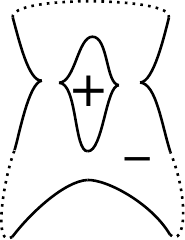}& \embeddpdf{-3.8}{3.8}{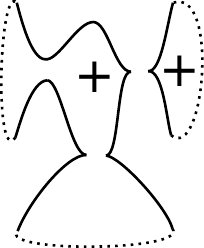} & 0 & \embeddpdf{-3.8}{3.8}{LR3abb++.pdf}+\embeddpdf{-3.8}{3.8}{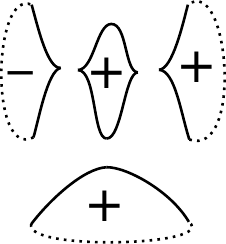}+\embeddpdf{-3.8}{3.8}{LR3bab++-+.pdf} \\
    \hline
     \embeddpdf{-3.8}{3.8}{LR3bba-+1.pdf} & 0 & \embeddpdf{-3.8}{3.8}{LR3bba-+1.pdf}+\embeddpdf{-3.8}{3.8}{LR3bab+-.pdf} &\embeddpdf{-3.8}{3.8}{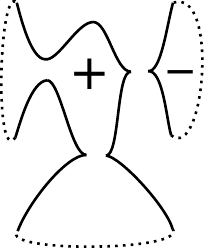} & 0 & \embeddpdf{-3.8}{3.8}{LR3abb+-.pdf}+\embeddpdf{-3.8}{3.8}{LR3bab++--.pdf}+\embeddpdf{-3.8}{3.8}{LR3bab+-+-.pdf}\\
     \hline
     \embeddpdf{-3.8}{3.8}{LR3bba--1.pdf} &0  & \embeddpdf{-3.8}{3.8}{LR3bba--1.pdf} &\embeddpdf{-3.8}{3.8}{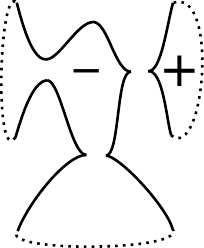} &0  & \embeddpdf{-3.8}{3.8}{LR3abb-+.pdf}+\embeddpdf{-3.8}{3.8}{LR3bab+--+.pdf}\\
    \hline
     \embeddpdf{-3.8}{3.8}{LR3bba+.pdf} & 0 & \embeddpdf{-3.8}{3.8}{LR3bba+.pdf}+\embeddpdf{-3.8}{3.8}{LR3bab++-.pdf}+\embeddpdf{-3.8}{3.8}{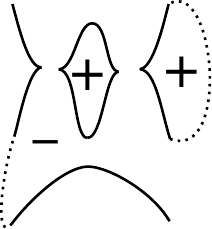}& \embeddpdf{-3.8}{3.8}{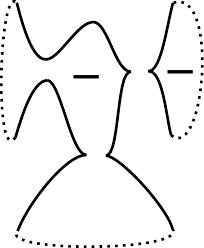} & 0 & \embeddpdf{-3.8}{3.8}{LR3abb--.pdf} +\embeddpdf{-3.8}{3.8}{LR3bab+---.pdf} \\
     \hline
\end{tblr}
\]
    \caption{}
    \label{tab:tab6}
\end{table}

\begin{table}[ht!]
    \centering
   \[
\begin{tblr}{|c|c|c|c|c|c|}
\hline
     S' & h_3'(S')& \rho_3'(S')&S' & h_3'(S') & \rho_3'(S')\\
\hline
 \embeddpdf{-3.8}{3.8}{LR3bba-.pdf} & 0 & \embeddpdf{-3.8}{3.8}{LR3bba-.pdf}+\embeddpdf{-3.8}{3.8}{LR3bab+--.pdf} &\embeddpdf{-3.8}{3.8}{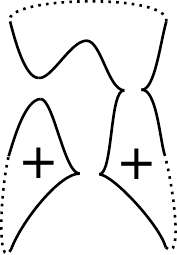} & 0 & \embeddpdf{-3.8}{3.8}{LR3abb++1.pdf}+\embeddpdf{-3.8}{3.8}{LR3bab++.pdf}\\
     \hline
     \embeddpdf{-3.8}{3.8}{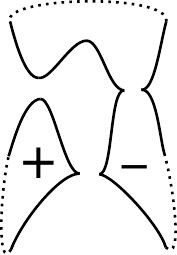} &0  & \embeddpdf{-3.8}{3.8}{LR3abb+-1.pdf}+\embeddpdf{-3.8}{3.8}{LR3bab+-.pdf}&\embeddpdf{-3.8}{3.8}{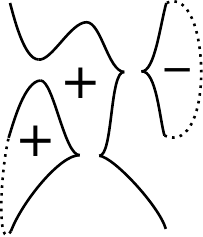} & 0 & \embeddpdf{-3.8}{3.8}{LR3abb++-.pdf}+\embeddpdf{-3.8}{3.8}{LR3bab++-.pdf} \\
    \hline
     \embeddpdf{-3.8}{3.8}{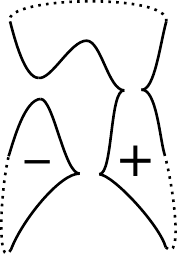} &  0& \embeddpdf{-3.8}{3.8}{LR3abb-+1.pdf}+\embeddpdf{-3.8}{3.8}{LR3bab+-.pdf}&\embeddpdf{-3.8}{3.8}{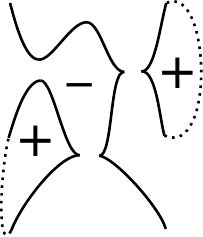} & 0 & \embeddpdf{-3.8}{3.8}{LR3abb+-+.pdf}+\embeddpdf{-3.8}{3.8}{LR3bab+-+.pdf} \\
     \hline
     \embeddpdf{-3.8}{3.8}{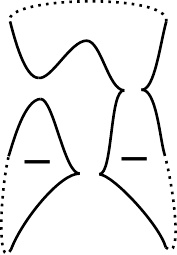} & 0 & \embeddpdf{-3.8}{3.8}{LR3abb--1.pdf} &\embeddpdf{-3.8}{3.8}{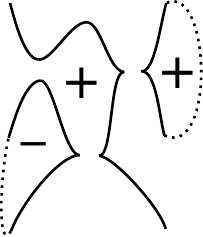} & 0 & \embeddpdf{-3.8}{3.8}{LR3abb-++.pdf}+\embeddpdf{-3.8}{3.8}{LR3bab+-+.pdf} \\
    \hline
     \embeddpdf{-3.8}{3.8}{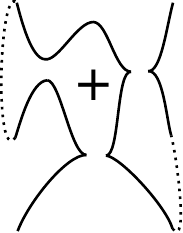} & 0 & \embeddpdf{-3.8}{3.8}{LR3abb+.pdf}+\embeddpdf{-3.8}{3.8}{LR3bab++-1.pdf}+\embeddpdf{-3.8}{3.8}{LR3bab+-+1.pdf}&\embeddpdf{-3.8}{3.8}{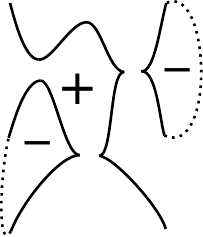} &  0& \embeddpdf{-3.8}{3.8}{LR3abb-+-.pdf}+\embeddpdf{-3.8}{3.8}{LR3bab+--.pdf} \\
     \hline
     \embeddpdf{-3.8}{3.8}{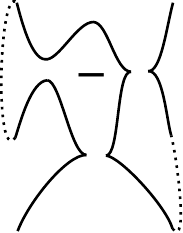} & 0 & \embeddpdf{-3.8}{3.8}{LR3abb-.pdf}+\embeddpdf{-3.8}{3.8}{LR3bab+--1.pdf}&\embeddpdf{-3.8}{3.8}{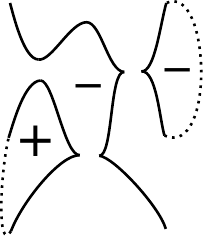} & 0 & \embeddpdf{-3.8}{3.8}{LR3abb+--.pdf}+\embeddpdf{-3.8}{3.8}{LR3bab+--.pdf} \\
     \hline
     \embeddpdf{-3.8}{3.8}{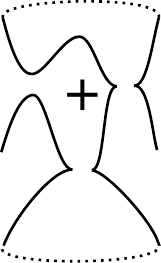} & 0 & \embeddpdf{-3.8}{3.8}{LR3abb+1.pdf}+\embeddpdf{-3.8}{3.8}{LR3bab++-2.pdf}+\embeddpdf{-3.8}{3.8}{LR3bab+-+2.pdf} &\embeddpdf{-3.8}{3.8}{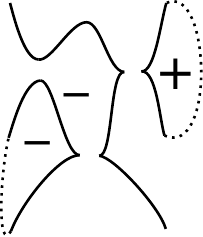} &0  & \embeddpdf{-3.8}{3.8}{LR3abb--+.pdf}\\
     \hline
      \embeddpdf{-3.8}{3.8}{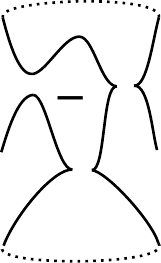} & 0 & \embeddpdf{-3.8}{3.8}{LR3abb-1.pdf}+\embeddpdf{-3.8}{3.8}{LR3bab+--2.pdf}& \embeddpdf{-3.8}{3.8}{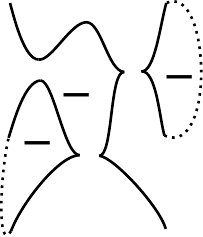} & 0 & \embeddpdf{-3.8}{3.8}{LR3abb---.pdf}  \\
     \hline
     \embeddpdf{-3.8}{3.8}{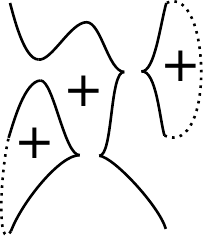} & 0 & \embeddpdf{-3.8}{3.8}{LR3abb+++.pdf}+\embeddpdf{-3.8}{3.8}{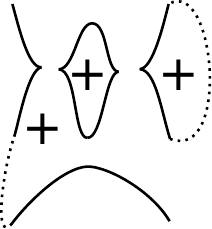}&  \embeddpdf{-3.8}{3.8}{LR3bbb.pdf} &  \embeddpdf{-3.8}{3.8}{LR3bab+.pdf}& 0  \\
     \hline
\end{tblr}
\]
    \caption{}
    \label{tab:tab7}
\end{table}

\clearpage
\bibliographystyle{plain}
\bibliography{refrences}

\end{document}